\documentclass[pdflatex,sn-mathphys-num]{sn-jnl}

\usepackage{graphicx}%
\usepackage{multirow}%
\usepackage{amsmath,amssymb,amsfonts}%
\usepackage{amsthm}%
\usepackage{mathrsfs}%
\usepackage[title]{appendix}%
\usepackage{xcolor}%
\usepackage{textcomp}%
\usepackage{manyfoot}%
\usepackage{booktabs}%
\usepackage{algorithm}%
\usepackage{algorithmicx}%
\usepackage{algpseudocode}%
\usepackage{listings}%
\usepackage{empheq}
\usepackage{enumitem}
\usepackage{pdflscape}   
\usepackage{xltabular}   
\usepackage{rotating}
\usepackage{tabularx}
\usepackage{array}
\usepackage{adjustbox}

\theoremstyle{thmstyleone}%
\newtheorem{theorem}{Theorem}[section]
\newtheorem{proposition}[theorem]{Proposition}%
\newtheorem{assumption}[theorem]{Assumption}
\newtheorem{corollary}[theorem]{Corollary}
\newtheorem{lemma}[theorem]{Lemma}

\theoremstyle{thmstyletwo}%
\newtheorem{example}[theorem]{Example}%
\newtheorem{remark}[theorem]{Remark}%

\theoremstyle{thmstylethree}%
\newtheorem{definition}[theorem]{Definition}%

\begin{document}

\title[Primal--Dual Error Bounds and KKT Metric Subregularity]{Primal-Dual Error Bounds and KKT Metric Subregularity in Convex Composite Optimization}


\author[1]{\fnm{Jiani} \sur{Li}}\email{lijiani\_qaz@163.com}

\author*[1,2]{\fnm{Qingna} \sur{Li}}\email{qnl@bit.edu.cn}


\affil*[1]{\orgdiv{School of Mathematics and Statistics}, \orgname{Beijing Institute of Technology}, \orgaddress{\city{Beijing}, \postcode{100081}, \country{China}}}

\affil[2]{\orgdiv{Beijing Key Laboratory on MCAACI/Key Laboratory of Mathematical Theory and Computation in Information Security}, \orgname{Beijing Institute of Technology}, \orgaddress{\city{Beijing}, \postcode{100081}, \country{China}}}



\abstract{
	We study primal and dual error bounds for convex composite optimization	problems and their Fenchel duals. For the primal problem, we use a	proximal-gradient residual, while for the generally nonsmooth dual	problem we use a subdifferential residual. We introduce a reduced	primal--dual KKT mapping whose zero set coincides with the Cartesian	product of the primal and dual solution sets under the standing	solvability and strong-duality assumptions.
	
	We establish implications in both directions between metric	subregularity of the reduced KKT mapping and the primal and dual	  local error bounds. If the composite matrix has full row	rank and the smooth gradient is locally Lipschitz continuous, KKT	metric subregularity is equivalent to the primal  local error bound. If the smooth term is strongly convex, it is equivalent to the	dual local error bound. Under full row rank and strong convexity,	KKT metric subregularity is equivalent to the simultaneous validity	of both error bounds.
	
	We also establish relations between the Luo--Tseng and local error 	bounds. The framework is then applied to quadratic--polyhedral models, 	strongly convex smooth models with general convex regularizers, and 	regularized least-squares models beyond polyhedrality and strong 	convexity, providing a unified treatment of primal, dual, and KKT 	error-bound properties.
}

\keywords{Convex optimization,
Error bounds,
Fenchel duality,
KKT systems,
Metric subregularity}



\maketitle

\section{Introduction}
\label{sec:introduction}
In this paper, we study error bounds for the convex composite problem
\begin{equation}\label{eq:intro-primal}
\min_{v\in\mathcal{X}}\;
F(v):=f_1(v)+f_2(Av+b),
\end{equation}
where \(A:\mathcal{X}\to Y\), \(b\in Y\), \(f_1:\mathcal{X}\to\mathbb R\) is finite-valued, convex, and Fr\'echet differentiable, and \(f_2:Y\to(-\infty,+\infty]\) is proper, closed, and convex. Its Fenchel dual, written in minimization form, is
\begin{equation}\label{eq:intro-dual}
	\min_{x\in Y}\;
	d(x):=
	f_2^*(x)+f_1^*(-A^\top x)-\langle b,x\rangle .
\end{equation}

The composite structure above covers a broad range of models arising in statistics, machine learning, and so on. Representative examples include the Lasso type problem  \cite{Osborne2000}, the support vector machine  \cite{CortesVapnik1995}, and convex clustering  \cite{Pelckmans2005}.  

Error bounds estimate the distance from a candidate point to a solution set by means of a residual measuring violation of an optimality condition. They play an important role in the convergence analysis of optimization algorithms and, in particular, often provide linear convergence guarantees without requiring global strong convexity. A classical starting point is Hoffman's error bound for systems of linear inequalities~\cite{Hoffman1952}; related developments for variational inequalities and polyhedral multifunctions were given by Pang~\cite{Pang1987} and Robinson~\cite{Robinson2009}. Since then, error bounds have been extensively studied and increasingly used in the convergence analysis of optimization algorithms. Below, we review the error-bound results most closely related to our work.

\subsection{Related work}
In the early 1990s, Luo and Tseng~\cite{LuoTseng1992} studied convex essentially smooth minimization over a polyhedral set and established an error bound estimating the distance from a feasible point to the optimal solution set, which was used to prove linear convergence of descent methods. They subsequently developed a more general error-bound analysis for feasible descent methods~\cite{LuoTseng1993a}. Later, Luo and Tseng~\cite{LuoTseng1993b} analyzed dual ascent methods for linearly constrained convex minimization and derived linear convergence from an estimate of the distance from a feasible dual point to the dual solution set by a residual. Error-bound analysis was later extended to nonsmooth and structured optimization models. Tseng and Yun~\cite{TsengYun2009} studied minimization of a smooth function plus a separable convex function and established linear convergence of a block coordinate gradient method under a local Lipschitzian error-bound condition. Tseng~\cite{Tseng2010} further studied error bounds and gradient methods for structured convex optimization. Error bounds and convergence rates were also investigated for specific regularization structures, including \(\ell_{1,p}\)-regularized problems~\cite{ZhouZhangSo2015} and trace-norm regularization~\cite{HouZhouSoLuo2013}.

Related work has also developed more general characterizations and frameworks for error bounds. Meng and Yang~\cite{MengYang2012} established two classes of equivalent conditions for local error bounds in finite-dimensional spaces, using subderivatives, subdifferentials, strong slopes, and variational properties of the reference set. For structured convex optimization, Zhou and So~\cite{zhouso2017} developed a unified framework for establishing error bounds for objectives consisting of a smooth convex function and a general proper closed convex function, and showed that a number of existing error-bound results can be recovered within this framework. Connections between error bounds and growth properties have also been studied through variational analysis. Arag\'on Artacho and Geoffroy~\cite{AragonGeoffroy2008} characterized metric regularity and subregularity of convex subdifferentials in terms of growth conditions, while Drusvyatskiy and Lewis~\cite{DrusvyatskiyLewis2018} established an equivalence between a proximal error bound and quadratic growth and used it to explain linear convergence of proximal methods. Necoara, Nesterov, and Glineur~\cite{NecoaraNesterovGlineur2019} established linear convergence of several first-order methods under conditions weaker than global strong convexity. More recently, Atenas et al.~\cite{AtenasSagastizabalSilvaSolodov2023} derived linear rates for classes of descent methods under a subdifferential-based error bound, while Traor\'e, Salzo, and Villa~\cite{TraoreSalzoVilla2023} obtained linear convergence of an asynchronous block-coordinate forward-backward method under an error-bound condition of Tseng type. Error bounds have also been studied for nonsmooth variational inequality models; see, for example, Li, Meng, and Yang~\cite{LiMengYang2025}.

On the other hand, error bounds have been closely connected with regularity properties of set-valued mappings and generalized equations. Robinson~\cite{Robinson1980} introduced the strong regularity framework for generalized equations. For convex generalized equations, Zheng and Ng~\cite{ZhengNg2007} studied metric subregularity and its relation to constraint qualifications, and Zheng and Ouyang~\cite{ZhengOuyang2011} extended related metric-subregularity results to composite-convex generalized equations. First order and second order characterizations of metric subregularity and calmness were developed by Gfrerer~\cite{Gfrerer2011}, while Kruger~\cite{Kruger2015} treated necessary and sufficient criteria for metric subregularity within an error-bound framework. Strong metric subregularity of set-valued mappings was studied systematically by Cibulka, Dontchev, and Kruger~\cite{CibulkaDontchevKruger2018}. For optimization systems, Ding, Sun, and Zhang~\cite{DingSunZhang2017} investigated robust isolated calmness for a class of conic programming problems, Bai, Ye, and Zhang~\cite{BaiYeZhang2019} developed directional quasi-/pseudo-normality conditions sufficient for metric subregularity. Closer to the setting of the present paper, Burke and Engle~\cite{BurkeEngle2020} embedded the optimality conditions of piecewise linear-quadratic convex-composite problems into a generalized equation and established conditions for strong metric subregularity and strong metric regularity of the corresponding KKT mappings. Mohammadi, Mordukhovich, and Sarabi~\cite{MohammadiMordukhovichSarabi2022} developed variational analysis of composite models with an emphasis on replacing metric-regularity assumptions by weaker metric-subregularity conditions, with applications including KKT systems. In convex composite conic programming, Cui, Sun, and Toh~\cite{CuiSunToh2019} established superlinear convergence of KKT residuals for the augmented Lagrangian method under a quadratic growth condition on the dual problem.

\subsection{Main contributions and outline}

We first introduce a reduced KKT mapping for the primal--dual pair \eqref{eq:intro-primal}--\eqref{eq:intro-dual}. Under the standing solvability and strong-duality assumptions, its zero set is exactly the Cartesian product of the primal and dual solution sets. This provides a common stability framework for the primal proximal-gradient residual, the dual subdifferential residual, and the KKT residual. We establish implications from KKT metric subregularity to the primal and dual   local error bounds under suitable exact-calculus and localization conditions. Conversely, the primal and dual  local error bounds together imply KKT metric subregularity under suitable reverse residual comparisons. We further show that a one-sided   local error bound is sufficient when supplemented by an appropriate distance-control condition.

These abstract implications lead to simple equivalence results under structural assumptions. If \(A\) has full row rank and \(\nabla f_1\) is locally Lipschitz continuous around a primal solution, KKT metric subregularity is equivalent to the primal  local error bound. If \(f_1\) is strongly convex, KKT metric subregularity is equivalent to the dual   local error bound.
 Moreover, if \(A\) has full row rank and \(f_1\) is strongly convex, KKT metric subregularity is equivalent to the simultaneous validity of the primal and dual   local error bounds. We also establish the relations between the  local and Luo--Tseng error bounds: the Luo--Tseng bounds yield the corresponding local bounds, while level-boundedness allows local bounds valid at every solution to be extended to the Luo--Tseng bounds.

Finally, we apply the framework to quadratic--polyhedral models, strongly convex smooth models with general convex regularizers, and regularized least-squares models whose smooth term may fail to be strongly convex and whose regularizer may be nonpolyhedral. The quadratic--polyhedral results cover Lasso-type models, SLOPE, several SVC and SVR formulations, and polyhedrally constrained convex quadratic programs. The strongly convex regime includes weighted convex clustering. Beyond these two structural regimes, we use metric-subregularity results for \(\ell_{1,q}\) and sparse-group regularizers to derive primal and dual error bounds for possibly rank-deficient regularized least-squares models, including Group Lasso.

The remainder of the paper is organized as follows.
Section~\ref{sec:Preliminaries} develops the preliminary primal--dual framework and introduces the error-bound properties used throughout the paper.
Section~\ref{sec:primal-dual-eb-relation} presents the main theoretical results relating KKT metric subregularity to the primal and dual   local error bounds and derives equivalence results under structural conditions.
Section~\ref{sec:applications} applies these results to
quadratic--polyhedral models, strongly convex composite problems, and regularized least-squares models whose reduced KKT mapping is known to be metrically subregular.
Section~\ref{sec:conclusion} concludes the paper.

\subsection{Notation}
The following notation is used throughout the paper. All vector spaces are finite-dimensional Euclidean spaces. For vectors \(z\) and \(w\) in the same Euclidean space, \(\langle z,w\rangle\) and \(\|z\|\) denote the standard Euclidean inner product and norm, respectively. On \(\mathbb R^n\times\mathbb R^m\), we use the product norm
$\|(v,x)\|:=\bigl(\|v\|^2+\|x\|^2\bigr)^{1/2}.$ For a matrix \(A\), \(A^\top\), \(A^\dagger\), and \(\|A\|\) denote its transpose, Moore-Penrose pseudoinverse, and induced Euclidean operator norm, respectively. For a symmetric matrix \(Q\), we write \(Q\succeq0\) if \(Q\) is positive semidefinite, that is, \(\langle z,Qz\rangle\geq0 \ \text{for all }z.\)

For a point \(z\) in a Euclidean space \(\mathcal E\) and \(r>0\), the closed ball centered at \(z\) with radius \(r\) is $\mathbb B_{\mathcal E}(z,r):=\{w\in\mathcal E:\|w-z\|\leq r\}.$ We omit the subscript \(\mathcal E\) when the ambient space is clear. For a set \(C\subseteq\mathcal E\), we write $\operatorname{dist}(z,C):=\inf_{w\in C}\|z-w\|,$ with the convention $ \operatorname{dist}(z,\emptyset):=+\infty.$ If \(C\) is nonempty, closed, and convex, \(\operatorname{proj}_C(z)\) denotes the Euclidean projection of \(z\) onto \(C\).

For an extended-real-valued function \(h\), \(\operatorname{dom}h\), \(\arg\min h\), \(h^*\), \(\partial h\), and \(\operatorname{prox}_h\) denote its effective domain, solution set, Fenchel conjugate, convex subdifferential, and proximal mapping, respectively. A proper convex function \(h\) is said to be \(\sigma\)-strongly convex,
for some \(\sigma>0\), if \(h(tx+(1-t)y) \leq t h(x)+(1-t)h(y) -\frac{\sigma}{2}t(1-t)\|x-y\|^2 \) for all \(x,y\in\operatorname{dom}h\) and \(t\in[0,1]\).  For a set-valued mapping \(\Phi:\mathcal E\rightrightarrows\mathcal T\), we use $\Phi^{-1}(y):=\{z\in\mathcal E:y\in\Phi(z)\}.$

\section{Preliminaries and problem setting}\label{sec:Preliminaries}
This section introduces the primal--dual framework and the error-bound properties used throughout the paper. We first formulate the Fenchel dual and the reduced KKT mapping, then define the primal and dual residuals together with the corresponding Luo--Tseng error bound and local error bound. Finally, we clarify how the local error bounds used in this paper are related to the Luo--Tseng  error bounds.
\subsection{Primal and dual problems and reduced KKT mapping}\label{subsec:primal-dual-problems}

We now formalize the primal-dual setting used throughout the paper. Consider the convex optimization problem
\begin{equation}\label{eq:P}
    \min_{v\in\mathbb R^n}\;
    F(v):=f_1(v)+f_2(Av+b),
    \tag{P}
\end{equation}
where \(A\in\mathbb R^{m\times n}\), \(b\in\mathbb R^m\), $f_1:\mathbb R^n\to\mathbb R$ is finite-valued, convex, and Fréchet differentiable, and $f_2:\mathbb R^m\to(-\infty,+\infty] $ is proper, closed, and convex.

Using the conjugate representation of \(f_2\), the Fenchel dual of \eqref{eq:P} is
\[
    \sup_{x\in\mathbb R^m}
    \left\{
        \langle b,x\rangle
        -
        f_2^*(x)
        -
        f_1^*(-A^\top x)
    \right\}.
\]
We write the dual equivalently in minimization form as
\begin{equation}\label{eq:Dmin}
    \min_{x\in\mathbb R^m}\;
    d(x):=f_2^*(x)
    + f_1^*(-A^\top x) -
    \langle b,x\rangle.
    \tag{D}
\end{equation}

Since \(f_1\) is finite-valued and convex on \(\mathbb R^n\), it is continuous and proper. Hence, by \cite[Theorem~12.2]{Rockafellar1970}, \(f_1^*\) is proper, closed, and convex. Similarly, \(f_2^*\) is proper, closed, and convex. Consequently, the dual objective \(d\) is closed and convex, although its properness is not automatic. Properness is therefore included in the standing assumptions below.

Define the primal and dual solution sets by
$\mathcal V:=\arg\min F, \  \Lambda:=\arg\min d,$ and denote the corresponding optimal values by $F^\star :=\inf_{v\in\mathbb R^n}F(v), \ d^\star:=\inf_{x\in\mathbb R^m}d(x).$ With the dual written in the minimization form \eqref{eq:Dmin}, strong duality is expressed as \(F^\star+d^\star=0\).

The standing solvability and duality assumptions used throughout the paper are summarized below.

\begin{assumption}[Basic primal-dual setting]\label{ass:basic-primal-dual-setting}
The following conditions hold.

\begin{enumerate}[label=\textnormal{(\alph*)},leftmargin=*]

\item The primal objective \(F\) is proper and
\(\mathcal V\neq\emptyset\).


\item The dual objective \(d\) is proper and \(\Lambda\neq\emptyset\).

\item Strong duality holds: \(F^\star+d^\star=0\).

\end{enumerate}
\end{assumption}

Unless otherwise stated, Assumption~\ref{ass:basic-primal-dual-setting} is maintained throughout the paper. Under this assumption, \(F\) and \(d\) are proper, closed, and convex. Indeed, \(f_2\circ(A\cdot+b)\) is proper, closed, and convex, while \(f_1\) is finite-valued and continuous. Consequently, the solution sets \(\mathcal V\) and \(\Lambda\) are nonempty, closed, and convex.

By the Fenchel--Young equality and the conjugacy relation for convex subdifferentials, the primal and dual optimality conditions can be expressed as
\[
    -A^\top x\in\partial f_1(v),
    \qquad
    Av+b\in\partial f_2^*(x).
\]
Since \(f_1\) is differentiable, this leads to the reduced primal-dual KKT system
\begin{equation}\label{eq:reduced-kkt-current}
    0=\nabla f_1(v)+A^\top x,
    \qquad
    Av+b\in\partial f_2^*(x).
\end{equation}

We refer to \eqref{eq:reduced-kkt-current} as the reduced primal-dual KKT system and define its solution set by

\[
    \mathcal S :=
    \left\{
        (v,x)\in\mathbb R^n\times\mathbb R^m \mid 
        \nabla f_1(v)+A^\top x=0,\;
        Av+b\in\partial f_2^*(x)
    \right\}.
\]
The next result shows that this reduced system exactly characterizes primal and dual optimality under the standing assumptions.

\begin{proposition}\label{prop:kkt-solution-product}

Under Assumption \ref{ass:basic-primal-dual-setting}, the reduced KKT solution set satisfies
\[
    \mathcal S=\mathcal V\times\Lambda.
\]
\end{proposition}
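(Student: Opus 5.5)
The plan is to prove the two inclusions separately. Both directions rest on the Fenchel--Young inequality applied to \(f_1\) and \(f_2\), together with strong duality \(F^\star+d^\star=0\). No subdifferential sum or chain rule will be needed, which matters because no constraint qualification is assumed.

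\emph{The key identity.} For any \(v\in\mathbb R^n\) and \(x\in\mathbb R^m\), Fenchel--Young gives
\[
f_1(v)+f_1^*(-A^\top x)\ge\langle -A^\top x,v\rangle,\qquad f_2(Av+b)+f_2^*(x)\ge\langle x,Av+b\rangle .
\]
Adding these, the inner products combine to \(\langle b,x\rangle\), and therefore
\[
F(v)+d(x)=\bigl[f_1(v)+f_1^*(-A^\top x)+\langle A^\top x,v\rangle\bigr]+\bigl[f_2(Av+b)+f_2^*(x)-\langle x,Av+b\rangle\bigr]\ge0 .
\]
Whenever the sum is finite, each bracket is nonnegative. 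The first bracket vanishes iff \(-A^\top x\in\partial f_1(v)\), i.e.\ \(\nabla f_1(v)+A^\top x=0\), by differentiability of \(f_1\). The second vanishes iff \(x\in\partial f_2(Av+b)\). Since \(f_2\) is proper, closed and convex, this is equivalent to \(Av+b\in\partial f_2^*(x)\), by \cite[Theorem~23.5 and Corollary~23.5.1]{Rockafellar1970}.

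\emph{Inclusion \(\mathcal V\times\Lambda\subseteq\mathcal S\).} Take \(v\in\mathcal V\) and \(x\in\Lambda\). Then \(F(v)=F^\star\) and \(d(x)=d^\star\), both finite by properness, so Assumption~\ref{ass:basic-primal-dual-setting}(c) gives \(F(v)+d(x)=0\). Both brackets are nonnegative and sum to zero, so both vanish. The characterization above then yields \((v,x)\in\mathcal S\).

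\emph{Inclusion \(\mathcal S\subseteq\mathcal V\times\Lambda\).} Take \((v,x)\in\mathcal S\). The two KKT relations give equality in both Fenchel--Young inequalities. In particular \(f_2(Av+b)\) and \(f_2^*(x)\) are finite, because a subgradient relation forces the point into the domain, and \(f_1^*(-A^\top x)\) is finite as well. Hence \(F(v)+d(x)=0\).

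On the other hand, weak duality follows from the same inequality applied to arbitrary pairs: \(F(v')+d(x')\ge0\) for all \(v',x'\), so \(F^\star+d^\star\ge0\). Then \(F^\star\le F(v)=-d(x)\le -d^\star=F^\star\), using strong duality in the last step. All inequalities are therefore equalities, so \(v\in\mathcal V\) and \(x\in\Lambda\).

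The only delicate point is the bookkeeping of \(\pm\infty\) values. One must verify finiteness of each term before subtracting, which properness of \(F\) and \(d\) and the domain membership implied by the subgradient relations ensure. Beyond that, the argument is routine.
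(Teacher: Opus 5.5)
Your proof is correct and follows essentially the same route as the paper. Both split $F(v)+d(x)$ into the two Fenchel--Young gaps for $f_1$ and $f_2$, use strong duality to force both gaps to vanish for $\mathcal V\times\Lambda\subseteq\mathcal S$, and conversely use the KKT relations to obtain $F(v)+d(x)=0$ and hence optimality; your weak-duality remark is superfluous, since the chain $F^\star\le F(v)=-d(x)\le -d^\star=F^\star$ uses only the definitions of the infima and strong duality.
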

\begin{proof}
	See Appendix~\ref{app:kkt-solution-product}.
\end{proof}

This Cartesian-product characterization makes the reduced KKT system a natural common stability object for the primal and dual problems. We therefore define the set-valued mapping
\begin{equation}\label{eq:reduced-kkt-mapping}
    \mathcal M(v,x)
    :=
    \begin{pmatrix}
        \nabla f_1(v)+A^\top x \\
        Av+b-\partial f_2^*(x)
    \end{pmatrix}.
\end{equation}
Here \(\mathcal M:\mathbb R^n\times\mathbb R^m
\rightrightarrows \mathbb R^n\times\mathbb R^m\) is a set-valued mapping, and
\[
    0\in\mathcal M(v,x)
\]
is exactly the reduced KKT system \eqref{eq:reduced-kkt-current}. Hence
\begin{equation}\label{eq:S-M-inverse}
    \mathcal S=\mathcal M^{-1}(0).
\end{equation}
The corresponding KKT residual is
\begin{equation}\label{eq:kkt-residual-current}
    r_{\rm KKT}(v,x)
    :=
    \operatorname{dist}(0,\mathcal M(v,x)).
\end{equation}
Equivalently,
\[
    r_{\rm KKT}(v,x)
    =
    \left(
    \|\nabla f_1(v)+A^\top x\|^2
    +
    \operatorname{dist}^2
    \bigl(Av+b,\partial f_2^*(x)\bigr)
    \right)^{1/2}.
\]
We measure the local stability of the reduced KKT system through metric subregularity of \(\mathcal M\); see, e.g., \cite{DontchevRockafellar2014,Kruger2015}.

\begin{definition}[Metric subregularity of the reduced KKT mapping]
\label{def:kkt-msr-pointwise}
Let \((\bar v,\bar x)\in\mathcal S\). We say that the reduced KKT mapping \(\mathcal M\) is metrically subregular at \((\bar v,\bar x)\) for \(0\) if there exist constants \(\kappa_{\rm KKT}>0\) and \(\epsilon_{\rm KKT}>0\) such that
\begin{equation}\label{eq:kkt-msr-pointwise}
    \operatorname{dist}((v,x),\mathcal S)
    \leq
    \kappa_{\rm KKT}\operatorname{dist}(0,\mathcal M(v,x)),
    \qquad
    \forall (v,x)\in
    \mathbb B((\bar v,\bar x),\epsilon_{\rm KKT}).
\end{equation}
\end{definition}

In particular, by Proposition~\ref{prop:kkt-solution-product}, the Euclidean product norm gives
\begin{equation}\label{eq:kkt-product-distance}
\operatorname{dist}^2\bigl((v,x),\mathcal S\bigr)
=
\operatorname{dist}^2(v,\mathcal V)
+
\operatorname{dist}^2(x,\Lambda),
\qquad
(v,x)\in\mathbb R^n\times\mathbb R^m.
\end{equation}
Together with \(\mathcal M^{-1}(0)=\mathcal S\), this identity shows that metric subregularity of the reduced KKT mapping controls the primal and dual distances simultaneously through the KKT residual. This provides the basis for relating KKT stability to the primal and dual error bounds studied in the following sections.


\subsection{Residual mappings and error bounds}\label{subsec:residual-error-bounds}

Having introduced the reduced KKT residual in the previous section, we now define the residuals associated separately with the primal and dual problems and formulate the corresponding error-bound properties. Let $G(v):=f_2(Av+b).$  Recall that \(G=f_2\circ(A\cdot+b)\) is proper, closed, and convex. Then problem~\eqref{eq:P} can be written as
\[
    \min_{v\in\mathbb R^n}\; f_1(v)+G(v).
\]
Define the primal residual mapping by
\begin{equation}\label{eq:primal-residual-map}
     r_P(v)
    :=
    \operatorname{prox}_{G}
    \bigl(v-\nabla f_1(v)\bigr)-v,
\end{equation}
where
\[
    \operatorname{prox}_{G}(z)
    :=
    \arg\min_{u\in\mathbb R^n}
    \left\{
        \frac12\|u-z\|^2+G(u)
    \right\}.
\]
Since \(G\) is proper, closed, and convex, \(\operatorname{prox}_G\) is nonexpansive \cite[Proposition~12.28]{BauschkeCombettes2017}. Moreover, since \(f_1\) is finite-valued, convex, and differentiable on \(\mathbb R^n\), its gradient \(\nabla f_1\) is continuous; see \cite[Corollary~25.5.1]{Rockafellar1970}. Consequently, \(v\mapsto v-\nabla f_1(v)\) is continuous, and the nonexpansiveness of \(\operatorname{prox}_G\) implies that \( r_P\) is continuous on \(\mathbb R^n\).

The dual objective \(d\) is generally nonsmooth. In particular, \(f_2^*\) may be nonsmooth, and without additional assumptions on \(f_1\), the conjugate \(f_1^*\) need not be differentiable either. We therefore use the subdifferential residual
\begin{equation}\label{eq:dual-subgradient-residual}
    r_D(x) := \operatorname{dist}(0,\partial d(x)).
\end{equation}
By the optimality condition of the proximal mapping and the first-order optimality condition for convex minimization, \( r_P(v)=0\) if and only if \(v\in\mathcal V\), while \(r_D(x)=0\) if and only if \(x\in\Lambda\).

Following the general local error-bound formulation of Meng and Yang~\cite{MengYang2012}, we use the term   local error bound for the residual-specific local estimates introduced below.

\begin{definition}[Local error bound {\cite{MengYang2012}}]
\label{def:general-local-error-bound}
Let \(C\subseteq\mathbb R^p\) be a nonempty closed set, and let \(\phi:\mathbb R^p\to\mathbb R_+\cup\{+\infty\}\) satisfy \(\phi(z)=0\) if and only if \(z\in C\). Given \(\bar z\in C\), the function \(\phi\) is said to be a local error bound of \(C\) at \(\bar z\) if 
there exist constants \(\kappa>0\) and \(\rho>0\) such that
\[
    \operatorname{dist}(z,C)
    \le
    \kappa\,\phi(z),
    \qquad
    \forall z\in\mathbb B(\bar z,\rho).
\]
\end{definition}

We now specialize the above local error-bound formulation to the primal and dual problems.

\begin{definition}[Local error bounds for the primal and dual problems]
	\label{def:local-primal-prox-gradient-eb}
	\label{def:local-dual-subdifferential-eb}
	\leavevmode\\ \textnormal{(i)}
	Let \(\bar v\in\mathcal V\). We say that the   local
	error bound holds for the primal problem \eqref{eq:P} at
	\(\bar v\) if there exist constants \(\rho_P>0\) and
	\(\kappa_P>0\) such that
	\[
	\operatorname{dist}(v,\mathcal V)
	\leq
	\kappa_P\| r_P(v)\|,
	\qquad
	\forall v\in\mathbb B(\bar v,\rho_P).
	\]
	
	\noindent \textnormal{(ii)}
	Let \(\bar x\in\Lambda\). We say that the   local
	error bound holds for the dual problem \eqref{eq:Dmin} at
	\(\bar x\) if there exist constants \(\rho_D>0\) and
	\(\kappa_D>0\) such that
	\[
	\operatorname{dist}(x,\Lambda)
	\leq
	\kappa_D r_D(x),
	\qquad
	\forall x\in\mathbb B(\bar x,\rho_D).
	\]
\end{definition}

Following the error-bound framework of Luo and Tseng~\cite{LuoTseng1993a} and the proximal-residual formulation used by Zhou and So~\cite{zhouso2017}, we recall the following Luo--Tseng error bound for the primal problem. The terminology ``Luo--Tseng error bound'' has subsequently been adopted in the literature; see, e.g., Yue, Zhou, and So~\cite{YueZhouSo2019}.

\begin{definition}[Luo--Tseng error bound for the primal problem]
	\label{def:primal-prox-gradient-eb}
	We say that the Luo--Tseng error bound holds for the primal problem~\eqref{eq:P} if, for every \(\zeta\geq F^\star\), there exist constants \(\kappa_P>0\) and \(\epsilon_P>0\) such that
	\begin{equation}
		\label{eq:primal-prox-gradient-eb}
		\operatorname{dist}(v,\mathcal V)
		\leq
		\kappa_P\| r_P(v)\|,
		\qquad
		\forall v\in\mathbb R^n
		\text{ satisfying }
		F(v)\leq\zeta,\quad
		\| r_P(v)\|\leq\epsilon_P.
	\end{equation}
\end{definition}

Motivated by the preceding primal formulation, we use the same sublevel-set and small-residual structure to define a dual analogue, with the primal proximal-gradient residual replaced by the dual subdifferential residual \(r_D\). The choice of \(r_D\) is motivated by the subdifferential error bounds studied by Drusvyatskiy and Lewis~\cite{DrusvyatskiyLewis2018}. For brevity, we refer to this property as the dual Luo--Tseng error bound.

\begin{definition}[Luo--Tseng error bound for the dual problem] \label{def:dual-subdifferential-eb} We say that the Luo--Tseng error bound holds for the dual problem~\eqref{eq:Dmin} if, for every \(\eta\geq d^\star\), there exist constants \(\kappa_D>0\) and \(\epsilon_D>0\) such that
	\begin{equation}
		\label{eq:dual-subdifferential-eb}
		\operatorname{dist}(x,\Lambda)
		\leq
		\kappa_D r_D(x),
		\qquad
		\forall x\in\mathbb R^m
		\text{ satisfying }
		d(x)\leq\eta,\quad
		r_D(x)\leq\epsilon_D.
	\end{equation}
\end{definition}

The two notions differ mainly in their localization. A local error bound is centered at a fixed solution and holds only in a neighborhood of that solution, with constants that may depend on the solution. By contrast, a Luo--Tseng error bound is uniform on a prescribed objective sublevel set, subject to a small-residual condition, with constants depending on the chosen sublevel. Thus, the former is solution-centered, whereas the latter is sublevel-set based. We next examine the relations between these two error-bound properties.

\subsection{Relations between Luo--Tseng and   local error bounds}\label{subsec:sublevel-local-eb-relations}

The Luo--Tseng error bounds are uniform on prescribed objective sublevel sets, subject to a small-residual condition, whereas the   local error bounds are neighborhood properties at individual solutions. For the converse passage from the local error bounds to the Luo--Tseng error bounds, we use level-boundedness, recalled next.

\begin{definition}[Level-boundedness, {\cite[Definition~1.8]{RockafellarWets1998}}]\label{def:level-boundedness}
An extended-real-valued function $h:\mathbb R^p\to(-\infty,+\infty]$ is said to be level-bounded if, for every
\(\alpha\in\mathbb R\), its lower level set
\[
    \operatorname{lev}_{\leq\alpha}h
    :=
    \{z\in\mathbb R^p:h(z)\leq\alpha\}
\]
is bounded, possibly empty.
\end{definition}

Since the primal and dual objectives \(F\) and \(d\) are closed, their lower level sets are closed. Consequently, whenever \(F\) or \(d\) is level-bounded, each of its lower level sets at a finite level is compact.

The following proposition summarizes the relations between the two types of error bounds.

\begin{proposition}[Relations between local  and Luo--Tseng error bounds]	\label{prop:local-sublevel-relations}
	The following statements hold.
	
	\noindent \textnormal{(i)}
	Let \(\bar v\in\mathcal V\), and suppose that
	\(\nabla f_1\) is locally Lipschitz continuous around \(\bar v\).
	If the primal Luo--Tseng error bound holds, then the primal
	  local error bound holds at \(\bar v\).
	
	\noindent \textnormal{(ii)}
	If the dual Luo--Tseng error bound holds, then the dual
	  local error bound holds at every
	\(\bar x\in\Lambda\).
	
	\noindent \textnormal{(iii)}
	If \(F\) is level-bounded and the primal   local
	error bound holds at every \(\bar v\in\mathcal V\), then the
	primal Luo--Tseng error bound holds.
	
	\noindent \textnormal{(iv)}
	If \(d\) is level-bounded and the dual   local
	error bound holds at every \(\bar x\in\Lambda\), then the
	dual Luo--Tseng error bound holds.
\end{proposition}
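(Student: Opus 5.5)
For (i) and (ii) the plan is to restrict to a small ball around the solution, where both side conditions of the Luo--Tseng bound hold automatically. For (iii) and (iv) the plan is a contradiction argument with compactness of sublevel sets, in which the local bounds are used to cover the solution set.

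\textbf{Part (i).} Fix $\zeta:=F^\star+1$ and take $\kappa_P,\epsilon_P$ from Definition~\ref{def:primal-prox-gradient-eb}. Since $r_P$ is continuous and $r_P(\bar v)=0$, there is $\rho_1>0$ with $\|r_P(v)\|\le\epsilon_P$ on $\mathbb B(\bar v,\rho_1)$. The obstacle is the sublevel condition $F(v)\le\zeta$, because $F$ may be $+\infty$ near $\bar v$ (for instance when $Av+b\notin\operatorname{dom}f_2$).

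I would handle this by passing through $u:=\operatorname{prox}_G(v-\nabla f_1(v))=v+r_P(v)\in\operatorname{dom}G$, proceeding as follows.
\begin{itemize}
\item The prox optimality condition gives $v-\nabla f_1(v)-u\in\partial G(u)$.
\item Convexity then yields $G(u)\le G(\bar v)+\langle v-\nabla f_1(v)-u,\,u-\bar v\rangle$.
\item Local Lipschitz continuity of $\nabla f_1$ (constant $L$) together with continuity of $f_1$ shows $F(u)\le F^\star+O(\|v-\bar v\|)$. Hence $F(u)\le\zeta$ for $v$ close to $\bar v$.
\item By nonexpansiveness of $\operatorname{prox}_G$ and the Lipschitz bound, $\|r_P(u)\|\le\|r_P(u)-r_P(v)\|+\|r_P(v)\|\le(2+L)\|u-v\|+\|r_P(v)\|=(3+L)\|r_P(v)\|$. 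In particular this is $\le\epsilon_P$ near $\bar v$.
\end{itemize}
Applying the Luo--Tseng bound at $u$ then gives
\[
\operatorname{dist}(v,\mathcal V)\le\|v-u\|+\operatorname{dist}(u,\mathcal V)\le\bigl(1+\kappa_P(3+L)\bigr)\|r_P(v)\|
\]
on a small ball around $\bar v$.

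\textbf{Part (ii).} If $r_D(x)>\epsilon_D$, the bound $\operatorname{dist}(x,\Lambda)\le\|x-\bar x\|\le\rho\le(\rho/\epsilon_D)r_D(x)$ holds trivially. If $d(x)=+\infty$, then $\partial d(x)=\emptyset$, so $r_D(x)=+\infty$ and the bound is trivial. The remaining case is $r_D(x)\le\epsilon_D$ with $x\in\operatorname{dom}\partial d$.

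Here lower semicontinuity alone does not bound $d(x)$, so I would use the subgradient inequality at $\bar x$. Take $g\in\partial d(x)$ with $\|g\|=r_D(x)$. Then $d(x)\le d(\bar x)+\langle g,x-\bar x\rangle\le d^\star+\epsilon_D\rho$. Set $\eta:=d^\star+\epsilon_D\rho$, with $\rho$ fixed beforehand and $\epsilon_D$ chosen for this $\eta$. The circularity is resolved by first fixing $\eta=d^\star+1$, which gives $\epsilon_D$, and then choosing $\rho\le 1/\epsilon_D$. The local bound follows with $\kappa=\max\{\kappa_D,\rho/\epsilon_D\}$.

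\textbf{Parts (iii) and (iv).} Argue by contradiction. Suppose that for some level $\zeta$ there are $v_k$ with $F(v_k)\le\zeta$, $\|r_P(v_k)\|\le1/k$ and $\operatorname{dist}(v_k,\mathcal V)>k\|r_P(v_k)\|$. Then:
\begin{itemize}
\item By level-boundedness and closedness, a subsequence converges to some $\hat v$ in the compact set $\operatorname{lev}_{\le\zeta}F$.
\item By continuity of $r_P$, $r_P(\hat v)=0$, so $\hat v\in\mathcal V$.
\item For large $k$, $v_k\in\mathbb B(\hat v,\rho_P(\hat v))$, so the local bound gives $\operatorname{dist}(v_k,\mathcal V)\le\kappa_P(\hat v)\|r_P(v_k)\|$. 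This contradicts the choice of $v_k$ once $k>\kappa_P(\hat v)$.
\end{itemize}
The case $r_P(v_k)=0$ is consistent, since then $v_k\in\mathcal V$ and the inequality cannot fail.

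For (iv) the argument is identical, with one change: continuity of $r_D$ is unavailable. Instead I use closedness of the graph of $\partial d$. Choose $g_k\in\partial d(x_k)$ with $\|g_k\|\le1/k$. Since $x_k\to\hat x$ and $g_k\to0$, we get $0\in\partial d(\hat x)$, so $\hat x\in\Lambda$. The same contradiction follows.

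I expect part (i) to be the main obstacle, specifically controlling $F$ near $\bar v$. This is where local Lipschitz continuity of $\nabla f_1$ is needed.
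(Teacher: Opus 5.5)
Your proposal is correct and follows the paper's proof in all four parts. In (i) you pass to the prox point $u=\operatorname{prox}_G(v-\nabla f_1(v))$, bound $F(u)$, compare $\|r_P(u)\|$ with $\|r_P(v)\|$, and apply the Luo--Tseng bound at $u$. Part (ii) uses the same subgradient-inequality argument with $\epsilon\rho\le\eta-d^\star$, and (iii)--(iv) are the same compactness-and-contradiction arguments, via continuity of $r_P$ and closedness of the graph of $\partial d$ respectively.

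The differences are cosmetic. The paper notes that $r_P(u)=T(u)-T(v)$ exactly, with $T=\operatorname{prox}_G(\cdot-\nabla f_1(\cdot))$, which gives the factor $1+L$ instead of your $3+L$. It also bounds $G(u)$ by comparing prox objectives rather than through the subgradient inequality. One correction to your closing remark: bounding $F(u)$ needs only continuity; the Lipschitz hypothesis is used for the residual comparison.
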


\begin{proof}
	See Appendix~\ref{app:local-sublevel-relations}.
\end{proof}

%
%


We next present two simple structural conditions under which the level-boundedness required in the preceding results holds.

\begin{corollary}[Sufficient conditions for level-boundedness]
	\label{cor:level-boundedness-sufficient-conditions}
	Suppose that
	Assumption~\ref{ass:basic-primal-dual-setting} holds.
	Then the following statements hold.
	
	\begin{enumerate}[label=\textnormal{(\roman*)},leftmargin=*]
		
		\item If \(f_1\) is strongly convex, then the primal objective \(F\) is level-bounded.
		
		\item If \(A\) has full row rank, then the dual objective \(d\) is level-bounded.
		
	\end{enumerate}
\end{corollary}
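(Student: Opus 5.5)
For part (i), I would show that \(F\) is bounded below by a coercive function. Let \(f_1\) be \(\sigma\)-strongly convex. Fix \(\bar x\in\Lambda\), which is nonempty by Assumption~\ref{ass:basic-primal-dual-setting}(b). The aim is an affine minorant of \(f_2(Av+b)\). Since \(d(\bar x)<+\infty\), we have \(f_2^*(\bar x)<+\infty\). The Fenchel--Young inequality then gives, for every \(v\),
\[
f_2(Av+b)\ \ge\ \langle \bar x, Av+b\rangle - f_2^*(\bar x).
\]
Hence
\[
F(v)\ \ge\ f_1(v)+\langle A^\top\bar x, v\rangle+\text{const}.
\]
The right-hand side is a finite-valued \(\sigma\)-strongly convex function, since adding a linear term preserves strong convexity. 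It therefore admits the quadratic lower bound \(\frac{\sigma}{2}\|v\|^2-c_1\|v\|-c_0\), obtained from the subgradient inequality at the origin together with strong convexity. So every lower level set of \(F\) lies inside a bounded set, and \(F\) is level-bounded. Alternatively, I could note that strong convexity of \(f_1\) makes \(F\) strongly convex, because \(G\) is convex and proper. A proper closed strongly convex function is coercive, and this gives the claim directly. I would present this shorter route and keep the affine-minorant argument as justification of coercivity.

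For part (ii), let \(A\) have full row rank, so that \(\|A^\top x\|\ge \lambda\|x\|\) with \(\lambda>0\) the smallest singular value of \(A\). I would produce affine minorants of the two conjugate terms from primal points. Fix \(\bar v\in\mathcal V\). Then \(F(\bar v)<+\infty\), so \(\bar y:=A\bar v+b\in\operatorname{dom} f_2\). Fenchel--Young gives
\[
f_2^*(x)\ \ge\ \langle x,\bar y\rangle - f_2(\bar y).
\]
For \(f_1^*\), I would use that \(f_1\) is finite-valued, hence bounded above on every ball: \(f_1(u)\le M_R\) for \(\|u-\bar v\|\le R\). This yields
\[
f_1^*(w)\ \ge\ \sup_{\|u-\bar v\|\le R}\langle w,u\rangle - M_R \;=\; \langle w,\bar v\rangle + R\|w\|-M_R.
\]
Substituting \(w=-A^\top x\) and combining the three terms,
\[
d(x)\ \ge\ \langle x,\bar y\rangle-\langle A^\top x,\bar v\rangle-\langle b,x\rangle + R\|A^\top x\|-\text{const}
= R\|A^\top x\|-\text{const},
\]
because \(\bar y - A\bar v-b=0\), so the linear terms cancel exactly. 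Full row rank then gives \(d(x)\ge R\lambda\|x\|-\text{const}\), and the level sets of \(d\) are bounded.

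The main obstacle is this cancellation and the choice of the base point. Expanding \(f_1^*\) and \(f_2^*\) around arbitrary points would leave a stray linear term in \(x\), which could spoil coercivity. Any fixed \(R>0\) suffices once the center is \(\bar v\) and \(\bar y=A\bar v+b\). Strong duality is not needed for either part; only the nonemptiness of \(\Lambda\) (for (i)) and of \(\mathcal V\) (for (ii)) from Assumption~\ref{ass:basic-primal-dual-setting} is used.
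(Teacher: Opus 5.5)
Your proof is correct. Part (i) is essentially the paper's argument; for part (ii) you take a genuinely different route.

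\textbf{Part (i).} The paper writes strong convexity of \(F\) as quadratic growth at a minimizer, \(F(v)\ge F(\bar v)+\frac{\mu}{2}\|v-\bar v\|^2\) with \(\bar v\in\mathcal V\). Your shorter route (strongly convex and proper implies coercive) is the same idea. Your affine-minorant variant via \(\bar x\in\Lambda\) is also valid, and any point of \(\operatorname{dom}f_2^*\) would serve in place of \(\bar x\).

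\textbf{Part (ii), the paper's route.} The paper uses full row rank together with Proposition~\ref{prop:kkt-solution-product}, and hence strong duality, to show that \(\Lambda=\{\bar x\}\) is a singleton. It then argues by recession directions. If some level set of \(d\) were unbounded, normalized directions \(u^k\to u\), combined with convexity and closedness of \(d\), would give \(d(\bar x+tu)\le d^\star\) for all \(t>0\). This contradicts uniqueness of the dual solution.

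\textbf{Part (ii), your route.} You construct an explicit coercive minorant instead. You use the Fenchel--Young bound for \(f_2^*\) at \(\bar y=A\bar v+b\), and the bound \(f_1^*(w)\ge\langle w,\bar v\rangle+R\|w\|-M_R\), which comes from \(f_1\) being finite-valued. Centering both at the same \(\bar v\) makes the linear terms cancel exactly, giving \(d(x)\ge R\lambda\|x\|-f_2(\bar y)-M_R\).

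\textbf{Comparison.} Your argument is more elementary and quantitative, and it needs less. It uses only one point of \(\operatorname{dom}F\) (you do not need \(\bar v\in\mathcal V\)), finiteness of \(f_1\), and injectivity of \(A^\top\). It uses neither strong duality nor dual solvability. Since \(R>0\) is arbitrary, it even shows that \(d\) is supercoercive, i.e. \(d(x)/\|x\|\to+\infty\). The paper's route, in turn, isolates the structural fact \(\Lambda=\{\bar x\}\), which it reuses in Corollary~\ref{cor:primal-eb-structural-kkt-msr}. It also rests on a general principle that does not depend on the conjugate structure: a closed convex function with a nonempty bounded solution set is level-bounded.
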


\begin{proof}
	See Appendix~\ref{app:cor:level-boundedness-sufficient-conditions}.
\end{proof}

\section{KKT metric subregularity and primal-dual local error bounds}\label{sec:primal-dual-eb-relation}
In this section, we study the relation between KKT metric subregularity and the primal and dual   local error bounds. We first establish implications in both directions and then derive equivalence results under structural assumptions.

Throughout this section, Assumption~\ref{ass:basic-primal-dual-setting} 
is maintained, and \((\bar v,\bar x)\in\mathcal S\) denotes a fixed primal--dual KKT solution. Our main results are summarized in Figure \ref{fig:abstract-kkt-eb-relations}.

\begin{figure}[htbp]
	\centering
	\includegraphics[width=\linewidth]{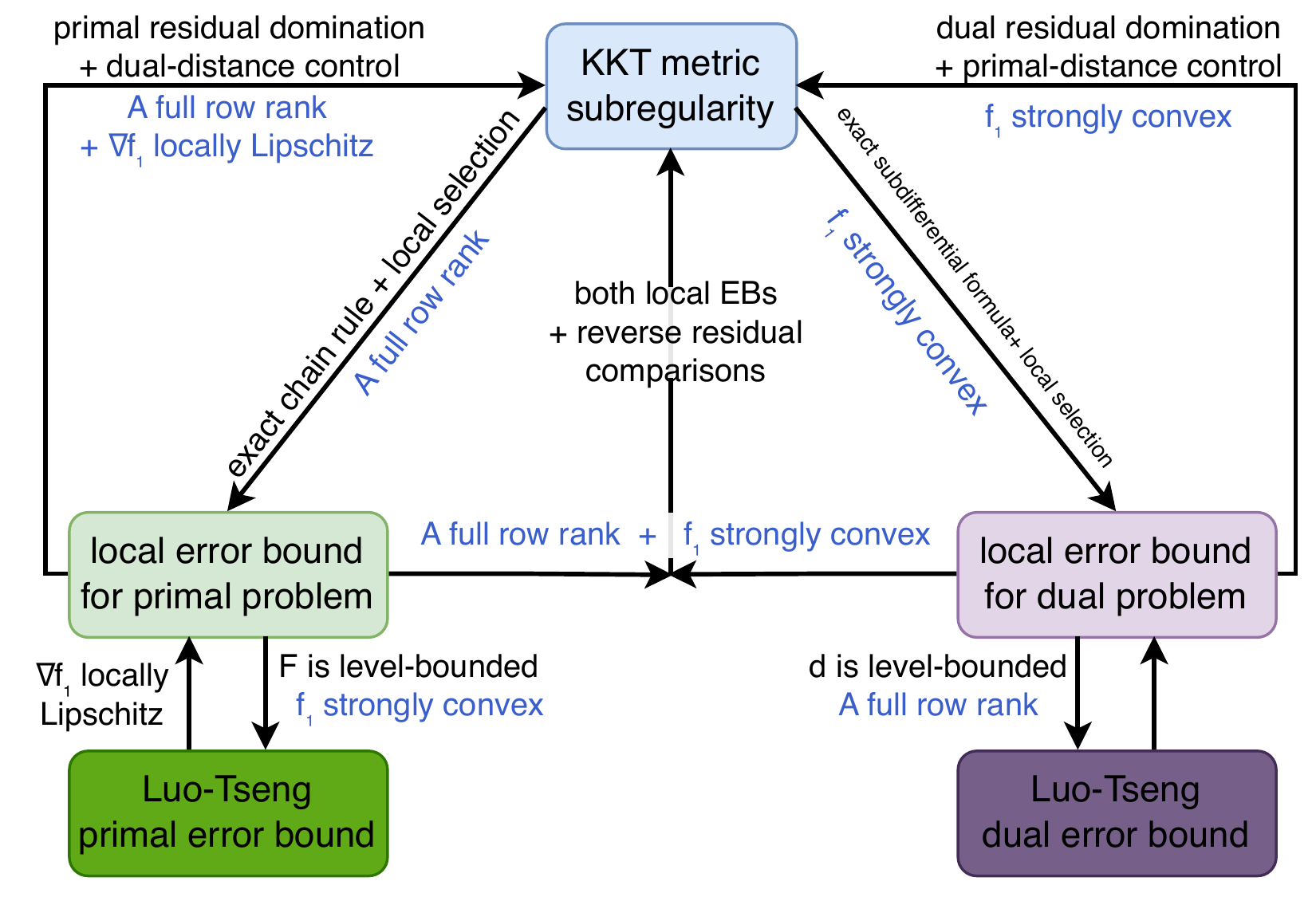}
	\caption{Implications among KKT metric subregularity,   local error bounds, and Luo--Tseng error bounds}
	\label{fig:abstract-kkt-eb-relations}
\end{figure}

\subsection{KKT metric subregularity implies   local error bounds}\label{subsec:kkt-msr-implies-eb}

We first study when KKT metric subregularity implies the primal and dual  local  error bounds.

We begin with the primal problem. The following lemma relates the primal residual to the KKT residual.
\begin{assumption}\label{ass:chainrule}
	 The exact subdifferential chain rule
	\[
	\partial(f_2\circ(A\cdot+b))(w) = A^\top\partial f_2(Aw+b)
	\]
	holds for all \(w\).
\end{assumption}

\begin{lemma}\label{lem:primal-residual-controls-kkt}
	Suppose that Assumption~\ref{ass:chainrule} holds.
	Then, for every \(v\in\mathbb R^n\), there exists
	\(x_P(v)\in\mathbb R^m\) such that
	\[
	r_{\rm KKT}(v,x_P(v))
	\le
	\sqrt{1+\|A\|^2}\,\|r_P(v)\|.
	\]
\end{lemma}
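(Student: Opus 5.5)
Set \(u:=\operatorname{prox}_G(v-\nabla f_1(v))\), so that \(r_P(v)=u-v\). The plan is to read off a dual vector from the optimality condition of the proximal step and to evaluate the KKT residual at that vector, keeping the primal argument \(v\) fixed.

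\textbf{Step 1: the dual vector.} The prox optimality condition gives
\[
v-\nabla f_1(v)-u\in\partial G(u),
\]
that is, \(-\nabla f_1(v)-r_P(v)\in\partial G(u)\). In particular \(u\in\operatorname{dom}\partial G\). Assumption~\ref{ass:chainrule} then yields some \(x_P(v)\in\partial f_2(Au+b)\) with
\[
A^\top x_P(v)=-\nabla f_1(v)-r_P(v).
\]
Since the chain rule holds for all arguments, no constraint qualification is needed beyond the assumption. If \(\partial G(u)\) were empty the prox condition would already be violated, so this case does not arise.

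\textbf{Step 2: bounding the two residual components.}
\begin{itemize}
\item First component: \(\nabla f_1(v)+A^\top x_P(v)=-r_P(v)\), whose norm is \(\|r_P(v)\|\).
\item Second component: by subdifferential conjugacy (\(f_2\) proper closed convex), \(x_P(v)\in\partial f_2(Au+b)\) is equivalent to \(Au+b\in\partial f_2^*(x_P(v))\). Therefore
\[
\operatorname{dist}\bigl(Av+b,\partial f_2^*(x_P(v))\bigr)\le \|Av-Au\|\le\|A\|\,\|r_P(v)\|.
\]
\end{itemize}

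\textbf{Step 3: combining.} Insert both bounds into the formula
\[
r_{\rm KKT}^2=\|\nabla f_1(v)+A^\top x\|^2+\operatorname{dist}^2\bigl(Av+b,\partial f_2^*(x)\bigr).
\]
This gives \(r_{\rm KKT}(v,x_P(v))^2\le(1+\|A\|^2)\|r_P(v)\|^2\), which is the claim.

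The only delicate point is Step 1: we need \(\partial G(u)\) to be represented exactly as \(A^\top\partial f_2(Au+b)\), so that a genuine \(x_P(v)\) exists. This is exactly what Assumption~\ref{ass:chainrule} supplies. Beyond that, the argument is direct.
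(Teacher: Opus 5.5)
Your proof is correct and follows the paper's own argument essentially step for step. The prox optimality condition combined with the exact chain rule yields \(x_P(v)\in\partial f_2(Au+b)\) with \(\nabla f_1(v)+A^\top x_P(v)=-r_P(v)\), and subdifferential conjugacy gives \(Au+b\in\partial f_2^*(x_P(v))\), which bounds the second residual component by \(\|A\|\,\|r_P(v)\|\).
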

\begin{proof}
	See Appendix~\ref{app:proof:lem:primal-residual-controls-kkt}
\end{proof}

\begin{assumption}[Primal localization]
	\label{ass:primal-localization}
	There exists a selection $x_P:\mathbb R^n\to\mathbb R^m$ 	of the vectors furnished by
	Lemma~\ref{lem:primal-residual-controls-kkt} such that
	\(x_P(\bar v)=\bar x\) and \(x_P\) is continuous at
	\(\bar v\).
\end{assumption}

We next use this estimate to show that KKT metric subregularity implies
the primal local error bound.

\begin{theorem}\label{thm:kkt-msr-implies-primal-eb}
	Suppose that Assumptions~\ref{ass:chainrule} and
	\ref{ass:primal-localization} hold, and that the reduced KKT mapping
	\(\mathcal M\) is metrically subregular at
	\((\bar v,\bar x)\) for \(0\) as in
	Definition~\ref{def:kkt-msr-pointwise}, with constants
	\(\kappa_{\rm KKT}>0\) and \(\epsilon_{\rm KKT}>0\).
	Then there exists \(\rho_P>0\) such that
	\[
	\operatorname{dist}(v,\mathcal V)
	\leq
	\kappa_P\|r_P(v)\|,
	\qquad
	\forall v\in\mathbb B(\bar v,\rho_P),
	\]
	where $	\kappa_P	:=	\kappa_{\rm KKT}\sqrt{1+\|A\|^2}.$
\end{theorem}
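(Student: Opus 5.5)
The idea is to feed the pair \((v,x_P(v))\) into the metric subregularity estimate \eqref{eq:kkt-msr-pointwise}. Then we use the product-distance identity \eqref{eq:kkt-product-distance} to drop the dual component of the distance.

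\textbf{Step 1 (choice of \(\rho_P\)).} Let \(x_P\) be the selection from Assumption~\ref{ass:primal-localization}. Since \(x_P(\bar v)=\bar x\) and \(x_P\) is continuous at \(\bar v\), there is \(\delta>0\) such that
\[
\|x_P(v)-\bar x\|\le \epsilon_{\rm KKT}/\sqrt2 \quad\text{whenever } \|v-\bar v\|\le\delta.
\]
Set \(\rho_P:=\min\{\delta,\epsilon_{\rm KKT}/\sqrt2\}\). Then for every \(v\in\mathbb B(\bar v,\rho_P)\),
\[
\|(v,x_P(v))-(\bar v,\bar x)\|^2=\|v-\bar v\|^2+\|x_P(v)-\bar x\|^2\le\epsilon_{\rm KKT}^2,
\]
so \((v,x_P(v))\in\mathbb B((\bar v,\bar x),\epsilon_{\rm KKT})\).

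\textbf{Step 2 (chain of inequalities).} For such \(v\), apply \eqref{eq:kkt-product-distance}, which rests on Proposition~\ref{prop:kkt-solution-product}, then \eqref{eq:kkt-msr-pointwise}, and finally Lemma~\ref{lem:primal-residual-controls-kkt}, which is available under Assumption~\ref{ass:chainrule}:
\[
\operatorname{dist}(v,\mathcal V)\le \operatorname{dist}\bigl((v,x_P(v)),\mathcal S\bigr)\le \kappa_{\rm KKT}\, r_{\rm KKT}(v,x_P(v))\le \kappa_{\rm KKT}\sqrt{1+\|A\|^2}\,\|r_P(v)\|.
\]
This is the claimed bound with \(\kappa_P=\kappa_{\rm KKT}\sqrt{1+\|A\|^2}\).

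\textbf{Main obstacle.} The only delicate point is the localization in Step 1. Lemma~\ref{lem:primal-residual-controls-kkt} by itself gives no control on where \(x_P(v)\) lies, so without Assumption~\ref{ass:primal-localization} the pair \((v,x_P(v))\) could fall outside the neighborhood on which metric subregularity is valid. Continuity of the selection at \(\bar v\), together with the equal split of the radius between the two components, settles this. Everything else is a direct concatenation of earlier results.
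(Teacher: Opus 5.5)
Your proposal is correct and follows the paper's proof essentially verbatim. You use Assumption~\ref{ass:primal-localization} to place $(v,x_P(v))$ inside the subregularity ball and then chain $\operatorname{dist}(v,\mathcal V)\le\operatorname{dist}((v,x_P(v)),\mathcal S)\le\kappa_{\rm KKT}\,r_{\rm KKT}(v,x_P(v))\le\kappa_{\rm KKT}\sqrt{1+\|A\|^2}\,\|r_P(v)\|$, exactly as the paper does; the only difference is that you choose $\rho_P$ explicitly by splitting $\epsilon_{\rm KKT}/\sqrt2$ between the two components, where the paper just says ``sufficiently small.''
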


\begin{proof}
	By Assumption~\ref{ass:primal-localization}, there exists
	\(\rho_P>0\) sufficiently small such that
	\[
	(v,x_P(v))
	\in
	\mathbb B((\bar v,\bar x),\epsilon_{\rm KKT}),
	\qquad
	\forall v\in\mathbb B(\bar v,\rho_P).
	\]
	Hence, for every such \(v\),
	\[
	\begin{aligned}
		\operatorname{dist}(v,\mathcal V)
		&\leq
		\operatorname{dist}((v,x_P(v)),\mathcal S)\\
		&\leq
		\kappa_{\rm KKT}
		r_{\rm KKT}(v,x_P(v))\\
		&\leq
		\kappa_{\rm KKT}\sqrt{1+\|A\|^2}\,
		\|r_P(v)\|.
	\end{aligned}
	\]
	The conclusion follows.
\end{proof}

Theorem~\ref{thm:kkt-msr-implies-primal-eb} is stated under abstract chain-rule and localization conditions. The following corollary shows that both conditions are automatically satisfied when \(A\) has full row rank.

\begin{corollary}
	\label{cor:full-row-rank-kkt-msr-implies-primal-eb}
	Assume that \(A\) has full row rank. If the reduced KKT mapping
	\(\mathcal M\) is metrically subregular at
	\((\bar v,\bar x)\) for \(0\), then the local error bound holds at
	\(\bar v\) for the primal problem~\eqref{eq:P}.
\end{corollary}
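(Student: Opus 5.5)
The plan is to deduce the corollary from Theorem~\ref{thm:kkt-msr-implies-primal-eb}. This requires checking that full row rank of \(A\) gives both Assumption~\ref{ass:chainrule} and Assumption~\ref{ass:primal-localization}.

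\textbf{Chain rule.} If \(A\) has full row rank, then \(A\) is surjective onto \(\mathbb R^m\). Hence \(\operatorname{range}(A)+b=\mathbb R^m\) meets \(\operatorname{ri}(\operatorname{dom} f_2)\), which is nonempty because \(f_2\) is proper and convex. By \cite[Theorem~23.9]{Rockafellar1970}, this gives the exact rule
\[
\partial(f_2\circ(A\cdot+b))(w)=A^\top\partial f_2(Aw+b)
\]
for every \(w\). So Assumption~\ref{ass:chainrule} holds.

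\textbf{Candidate selection.} For Assumption~\ref{ass:primal-localization}, I will use the multiplier implicit in Lemma~\ref{lem:primal-residual-controls-kkt}. Set \(z:=v-\nabla f_1(v)\) and \(u:=\operatorname{prox}_G(z)=v+r_P(v)\). The optimality condition gives \(z-u\in\partial G(u)=A^\top\partial f_2(Au+b)\).

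Because \(A^\top\) is injective, there is a unique \(y\in\partial f_2(Au+b)\) with \(A^\top y=z-u\), namely
\[
y=(AA^\top)^{-1}A(z-u).
\]
Define \(x_P(v)\) to be this \(y\). I will take for granted that this is exactly the vector produced in the proof of Lemma~\ref{lem:primal-residual-controls-kkt}. As a check, \(\nabla f_1(v)+A^\top y=-r_P(v)\), and \(Au+b\in\partial f_2^*(y)\) gives
\[
\operatorname{dist}(Av+b,\partial f_2^*(y))\le\|A\|\,\|r_P(v)\|,
\]
which recovers the bound in the lemma. If the appendix uses a different selection, I would simply redo this two-line estimate, so the selection is legitimate either way.

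\textbf{Continuity at \(\bar v\).} The formula
\[
x_P(v)=(AA^\top)^{-1}A\bigl(v-\nabla f_1(v)-\operatorname{prox}_G(v-\nabla f_1(v))\bigr)
\]
is a composition of continuous maps. Here \(\nabla f_1\) is continuous, as noted in Section~\ref{subsec:residual-error-bounds}, and \(\operatorname{prox}_G\) is nonexpansive. Hence \(x_P\) is continuous everywhere.

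\textbf{Value at \(\bar v\): the main obstacle.} I expect the delicate step to be showing \(x_P(\bar v)=\bar x\). At \(\bar v\in\mathcal V\) we have \(r_P(\bar v)=0\), so
\[
x_P(\bar v)=-(AA^\top)^{-1}A\nabla f_1(\bar v).
\]
On the other hand, \((\bar v,\bar x)\in\mathcal S\) gives \(A^\top\bar x=-\nabla f_1(\bar v)\). Injectivity of \(A^\top\) then forces \(\bar x=-(AA^\top)^{-1}A\nabla f_1(\bar v)\), so \(x_P(\bar v)=\bar x\). Uniqueness of the dual solution attached to \(\bar v\) is therefore exactly what full row rank supplies.

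With both assumptions verified, Theorem~\ref{thm:kkt-msr-implies-primal-eb} yields the primal local error bound at \(\bar v\), with \(\kappa_P=\kappa_{\rm KKT}\sqrt{1+\|A\|^2}\).
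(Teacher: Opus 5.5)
Your proposal is correct and follows the paper's proof essentially step for step. You obtain the chain rule from surjectivity of $v\mapsto Av+b$, and you use injectivity of $A^\top$ to get the unique, continuous selection $x_P(v)=-(AA^\top)^{-1}A\bigl(\nabla f_1(v)+r_P(v)\bigr)$ with $x_P(\bar v)=\bar x$, before invoking Theorem~\ref{thm:kkt-msr-implies-primal-eb}. Your hedge about which vector the lemma produces is unnecessary, since injectivity of $A^\top$ already forces that vector to be unique.
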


\begin{proof}
	See Appendix~\ref{app:proof:cor:full-row-rank-kkt-msr-implies-primal-eb}.
\end{proof}

We next consider the dual problem. The following assumption gives the
subdifferential formula needed to relate the dual residual to the KKT
residual.

\begin{assumption}[Exact dual subdifferential formula]
	\label{ass:dual-subdifferential}
	The exact subdifferential formula
	\[
	\partial d(x)
	=
	\left\{
	u-Av-b:
	u\in\partial f_2^*(x),\
	v\in\partial f_1^*(-A^\top x)
	\right\}
	\]
	holds for every \(x\in\mathbb R^m\).
\end{assumption}

\begin{lemma}\label{lem:dual-residual-controls-kkt}
	Suppose that Assumption~\ref{ass:dual-subdifferential} holds.
	Then, for every \(x\in\mathbb R^m\) with \(r_D(x)<+\infty\),
	there exists \(v_D(x)\in\mathbb R^n\) such that
	\[
	r_{\rm KKT}(v_D(x),x)
	\leq
	r_D(x).
	\]
\end{lemma}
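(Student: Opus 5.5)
The plan is to construct \(v_D(x)\) directly from a minimal-norm element of \(\partial d(x)\), and then check each block of the KKT residual against it.

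Fix \(x\in\mathbb R^m\) with \(r_D(x)<+\infty\). Then \(\partial d(x)\) is nonempty. Since \(d\) is proper, closed, and convex, \(\partial d(x)\) is also closed and convex. Hence the infimum defining \(r_D(x)=\operatorname{dist}(0,\partial d(x))\) is attained at some \(g\in\partial d(x)\) with \(\|g\|=r_D(x)\).

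By Assumption~\ref{ass:dual-subdifferential}, we can write
\[
g=u-Av-b,\qquad u\in\partial f_2^*(x),\quad v\in\partial f_1^*(-A^\top x).
\]
We set \(v_D(x):=v\) with this particular \(v\).

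First block. We use the conjugate subgradient inversion for the proper closed convex function \(f_1\): \(v\in\partial f_1^*(-A^\top x)\) is equivalent to \(-A^\top x\in\partial f_1(v)\). Since \(f_1\) is finite-valued and differentiable, \(\partial f_1(v)=\{\nabla f_1(v)\}\). Therefore \(\nabla f_1(v)+A^\top x=0\), and the first block of \(\mathcal M(v,x)\) vanishes exactly.

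Second block. We have \(Av+b-u=-g\) with \(u\in\partial f_2^*(x)\). Hence
\[
\operatorname{dist}\bigl(Av+b,\partial f_2^*(x)\bigr)\le\|Av+b-u\|=\|g\|=r_D(x).
\]

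Conclusion. Inserting both blocks into the explicit formula for \(r_{\rm KKT}\) gives
\[
r_{\rm KKT}(v_D(x),x)\le\bigl(0+r_D(x)^2\bigr)^{1/2}=r_D(x).
\]

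Main obstacle. The only delicate point is attainment of the distance. It rests on closedness of \(\partial d(x)\), which holds for any convex function once the set is nonempty. If one prefers to avoid this, an \(\varepsilon\)-approximate element would only give \(r_D(x)+\varepsilon\), so using attainment is the cleaner route. The inversion \(\partial f_1^*=(\partial f_1)^{-1}\) is standard for closed proper convex \(f_1\), citing \cite{Rockafellar1970}.
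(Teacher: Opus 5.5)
Your proof is correct and follows essentially the same route as the paper. You take a minimal-norm element \(g\in\partial d(x)\), which exists because the subdifferential is nonempty, closed, and convex. You split it via Assumption~\ref{ass:dual-subdifferential}, kill the first block using \(v\in\partial f_1^*(-A^\top x)\iff -A^\top x\in\partial f_1(v)=\{\nabla f_1(v)\}\), and bound the second block by \(\|g\|=r_D(x)\).
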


\begin{proof}
	See Appendix~\ref{app:proof:lem:dual-residual-controls-kkt}.
\end{proof}

\begin{assumption}[Dual localization]
	\label{ass:dual-localization}
	There exists a selection $	v_D:\operatorname{dom}\partial d\to\mathbb R^n$ 	of the vectors furnished by 	Lemma~\ref{lem:dual-residual-controls-kkt} such that
	\(v_D(\bar x)=\bar v\) and \(v_D\) is continuous at
	\(\bar x\) relative to \(\operatorname{dom}\partial d\).
\end{assumption}

We next use this estimate to show that KKT metric subregularity implies
the dual local error bound.

\begin{theorem}\label{thm:kkt-msr-implies-dual-eb}
	Suppose that Assumptions~\ref{ass:dual-subdifferential} and
	\ref{ass:dual-localization} hold, and that the reduced KKT mapping
	\(\mathcal M\) is metrically subregular at
	\((\bar v,\bar x)\) for \(0\) as in
	Definition~\ref{def:kkt-msr-pointwise}, with constants
	\(\kappa_{\rm KKT}>0\) and \(\epsilon_{\rm KKT}>0\).
	Then there exists \(\rho_D>0\) such that
	\[
	\operatorname{dist}(x,\Lambda)
	\leq
	\kappa_D r_D(x),
	\qquad
	\forall x\in\mathbb B(\bar x,\rho_D),
	\]
	where \(\kappa_D:=\kappa_{\rm KKT}.\)
\end{theorem}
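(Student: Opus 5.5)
The argument mirrors the proof of Theorem~\ref{thm:kkt-msr-implies-primal-eb}, with Lemma~\ref{lem:dual-residual-controls-kkt} and Assumption~\ref{ass:dual-localization} in place of their primal counterparts. The one new point is that $r_D(x)$ may equal $+\infty$ off $\operatorname{dom}\partial d$. We therefore split $x\in\mathbb B(\bar x,\rho_D)$ into two cases. If $x\notin\operatorname{dom}\partial d$, then $r_D(x)=\operatorname{dist}(0,\emptyset)=+\infty$ and the inequality $\operatorname{dist}(x,\Lambda)\le\kappa_D r_D(x)$ holds trivially, since $\Lambda\neq\emptyset$ makes the left side finite. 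So only $x\in\operatorname{dom}\partial d$, where $r_D(x)<+\infty$, needs work.

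Take the selection $v_D$ from Assumption~\ref{ass:dual-localization}. Because $v_D(\bar x)=\bar v$ and $v_D$ is continuous at $\bar x$ relative to $\operatorname{dom}\partial d$, we can choose $\rho_D\in(0,\epsilon_{\rm KKT}/\sqrt2]$ small enough that $\|v_D(x)-\bar v\|\le\epsilon_{\rm KKT}/\sqrt2$ for all $x\in\operatorname{dom}\partial d\cap\mathbb B(\bar x,\rho_D)$. Then
\[
(v_D(x),x)\in\mathbb B\bigl((\bar v,\bar x),\epsilon_{\rm KKT}\bigr),
\]
so the subregularity estimate \eqref{eq:kkt-msr-pointwise} applies at these points.

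For such $x$, the product-distance identity \eqref{eq:kkt-product-distance} (which relies on Proposition~\ref{prop:kkt-solution-product}), followed by metric subregularity and then Lemma~\ref{lem:dual-residual-controls-kkt}, gives the chain
\[
\operatorname{dist}(x,\Lambda)\le\operatorname{dist}\bigl((v_D(x),x),\mathcal S\bigr)\le\kappa_{\rm KKT}\,r_{\rm KKT}(v_D(x),x)\le\kappa_{\rm KKT}\,r_D(x).
\]
This is the claim with $\kappa_D=\kappa_{\rm KKT}$.

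There is no substantial obstacle. The only delicate point is the domain issue: the localization is assumed only relative to $\operatorname{dom}\partial d$, so the trivial case $r_D(x)=+\infty$ must be handled separately, and the radius must be chosen so that the pair $(v_D(x),x)$, not just $x$, lies in the subregularity ball.
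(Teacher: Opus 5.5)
Your proof is correct and follows the paper's argument essentially step for step: the same case split on whether \(r_D(x)\) is finite, the same use of the dual localization to choose \(\rho_D\) so that the pair \((v_D(x),x)\), not just \(x\), lies in the subregularity ball, and the same three-inequality chain through \(\operatorname{dist}((v_D(x),x),\mathcal S)\) and \(r_{\rm KKT}(v_D(x),x)\). Your explicit choice \(\rho_D\le\epsilon_{\rm KKT}/\sqrt2\) with \(\|v_D(x)-\bar v\|\le\epsilon_{\rm KKT}/\sqrt2\) simply makes concrete the paper's ``sufficiently small'' radius.
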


\begin{proof}
	By Assumption~\ref{ass:dual-localization}, there exists
	\(\rho_D>0\) sufficiently small such that
	\begin{equation}\label{eq:dual-pair-localization}
		(v_D(x),x)
		\in
		\mathbb B((\bar v,\bar x),\epsilon_{\rm KKT}),
	\end{equation}
	for every \(x\in\mathbb B(\bar x,\rho_D)\) satisfying
	\(r_D(x)<+\infty\).
	
	Take any \(x\in\mathbb B(\bar x,\rho_D)\).
	If \(r_D(x)=+\infty\), then
	\[
	\operatorname{dist}(x,\Lambda)
	\leq
	\kappa_{\rm KKT} r_D(x)
	\]
	holds automatically.
	
	Suppose now that \(r_D(x)<+\infty\). Since
	\((v_D(x),x)\) lies in the neighborhood on which metric
	subregularity is valid, we have
	\[
	\operatorname{dist}((v_D(x),x),\mathcal S)
	\leq
	\kappa_{\rm KKT}
	r_{\rm KKT}(v_D(x),x).
	\]
	
	By Proposition~\ref{prop:kkt-solution-product},
	\(\mathcal S=\mathcal V\times\Lambda\). Therefore,
	\[
	\operatorname{dist}(x,\Lambda)
	\leq
	\operatorname{dist}((v_D(x),x),\mathcal S).
	\]
	
	Combining this inequality with
	Lemma~\ref{lem:dual-residual-controls-kkt}, we obtain
	\[
	\begin{aligned}
		\operatorname{dist}(x,\Lambda)
		&\leq
		\operatorname{dist}((v_D(x),x),\mathcal S) \leq
		\kappa_{\rm KKT}
		r_{\rm KKT}(v_D(x),x) \leq
		\kappa_{\rm KKT}r_D(x).
	\end{aligned}
	\]
	Thus, in both cases, it holds that
	\[
	\operatorname{dist}(x,\Lambda)
	\leq
	\kappa_{\rm KKT}r_D(x).
	\]
	Since \(x\in\mathbb B(\bar x,\rho_D)\) was arbitrary, the local
	error bound holds at \(\bar x\) for the dual problem, with
	\(\kappa_D=\kappa_{\rm KKT}\).
\end{proof}

Theorem~\ref{thm:kkt-msr-implies-dual-eb} is stated under abstract
subdifferential and localization conditions. The following corollary
shows that both conditions are automatically satisfied when \(f_1\)
is strongly convex.

\begin{corollary}\label{cor:strong-convexity-kkt-msr-implies-dual-eb}
	Assume that \(f_1\) is \(\sigma\)-strongly convex for some
	\(\sigma>0\). If the reduced KKT mapping \(\mathcal M\) is metrically
	subregular at \((\bar v,\bar x)\) for \(0\), then the local error bound
	holds at \(\bar x\) for the dual problem~\eqref{eq:Dmin}.
\end{corollary}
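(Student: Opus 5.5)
The plan is to verify Assumptions~\ref{ass:dual-subdifferential} and \ref{ass:dual-localization} under strong convexity of \(f_1\), and then invoke Theorem~\ref{thm:kkt-msr-implies-dual-eb}.

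\emph{Exact subdifferential formula.} Since \(f_1\) is finite-valued and \(\sigma\)-strongly convex, its conjugate \(f_1^*\) is finite-valued on all of \(\mathbb R^n\). It is also differentiable with \(\nabla f_1^*\) being \((1/\sigma)\)-Lipschitz continuous, and \(\nabla f_1^*(w)=(\nabla f_1)^{-1}(w)\) is the unique maximizer in the definition of \(f_1^*(w)\). Hence \(x\mapsto f_1^*(-A^\top x)-\langle b,x\rangle\) is a finite-valued, continuously differentiable convex function on \(\mathbb R^m\), with gradient \(-A\nabla f_1^*(-A^\top x)-b\). The sum rule for a proper closed convex function plus a continuous, indeed differentiable, convex function holds without further qualification (e.g.\ \cite[Theorem~23.8]{Rockafellar1970}, since the second function has full domain). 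This gives
\[
\partial d(x)=\partial f_2^*(x)-A\nabla f_1^*(-A^\top x)-b ,
\]
which is exactly Assumption~\ref{ass:dual-subdifferential}, because \(\partial f_1^*(-A^\top x)=\{\nabla f_1^*(-A^\top x)\}\).

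\emph{Localization.} By the construction in Lemma~\ref{lem:dual-residual-controls-kkt}, I expect the natural choice to be \(v_D(x):=\nabla f_1^*(-A^\top x)\). With this choice \(\nabla f_1(v_D(x))+A^\top x=0\). Moreover, choosing \(u\in\partial f_2^*(x)\) with \(u-Av_D(x)-b\) of minimal norm in \(\partial d(x)\) gives
\[
\operatorname{dist}(Av_D(x)+b,\partial f_2^*(x))\le r_D(x).
\]
The minimum is attained since \(\partial f_2^*(x)\) is closed. Hence \(r_{\rm KKT}(v_D(x),x)\le r_D(x)\), so this \(v_D\) is a legitimate selection. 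This is the one point to check: the selection in Assumption~\ref{ass:dual-localization} must be among the vectors furnished by the lemma, and I would argue that any \(v\) satisfying the lemma's inequality can be taken to be this one. In any case the bound holds directly for it, so Theorem~\ref{thm:kkt-msr-implies-dual-eb}'s proof goes through verbatim with it.

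\emph{Continuity and anchoring.} The map \(x\mapsto\nabla f_1^*(-A^\top x)\) is Lipschitz with constant \(\|A\|/\sigma\), so it is continuous everywhere, in particular relative to \(\operatorname{dom}\partial d\). At \(\bar x\) we have \(\nabla f_1(\bar v)=-A^\top\bar x\) from \((\bar v,\bar x)\in\mathcal S\). Since \(\nabla f_1\) is injective under strong convexity, this yields \(v_D(\bar x)=\nabla f_1^*(-A^\top\bar x)=\bar v\). Thus Assumption~\ref{ass:dual-localization} holds, and Theorem~\ref{thm:kkt-msr-implies-dual-eb} yields the dual local error bound at \(\bar x\) with \(\kappa_D=\kappa_{\rm KKT}\).

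The main obstacle is only the conjugate facts: differentiability of \(f_1^*\) with Lipschitz gradient, which is standard, e.g.\ \cite[Theorem~18.15]{BauschkeCombettes2017}, and the qualification-free sum rule. I would cite these rather than reprove them.
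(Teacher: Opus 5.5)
Your proposal is correct and follows essentially the same route as the paper: you verify the exact dual subdifferential formula from the differentiability of $f_1^*$, take $v_D(x)=\nabla f_1^*(-A^\top x)$, use its $\|A\|/\sigma$-Lipschitz continuity together with $v_D(\bar x)=\bar v$, and then invoke Theorem~\ref{thm:kkt-msr-implies-dual-eb}. Your hedge about whether this is a legitimate selection is unnecessary. The lemma's construction requires $v_D(x)\in\partial f_1^*(-A^\top x)=\{\nabla f_1^*(-A^\top x)\}$, so the selection is forced, which is exactly how the paper states it.
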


\begin{proof}
	See Appendix~\ref{app:proof:cor:strong-convexity-kkt-msr-implies-dual-eb}.
\end{proof}

\subsection{Local error bounds imply KKT metric subregularity}
\label{subsec:eb-implies-kkt-msr}
We now consider the converse direction. We first show that the primal and dual   local error bounds together imply KKT metric subregularity under suitable residual comparisons.

\begin{theorem}	\label{thm:primal-dual-eb-implies-kkt-msr}
	Assume that the   local error bounds hold at 	\(\bar v\) and \(\bar x\) for the primal and dual problems, respectively, 	with constants \((\rho_P,\kappa_P)\) and \((\rho_D,\kappa_D)\).
	
	Suppose further that there exist constants 	\(\rho_R>0\) and \(c_P,c_D>0\) such that
	\begin{equation}
		\label{eq:reverse-residual-comparisons}
		\| r_P(v)\|
		\leq
		c_P r_{\rm KKT}(v,x),
		\qquad
		r_D(x)
		\leq
		c_D r_{\rm KKT}(v,x)
	\end{equation}
	for every 	\((v,x)\in\mathbb B((\bar v,\bar x),\rho_R)\).
	
	Then the reduced KKT mapping \(\mathcal M\) is metrically subregular at 	\((\bar v,\bar x)\) for \(0\). More precisely, with $	\rho_0:=\min\{\rho_P,\rho_D,\rho_R\},$ 	one has
	\[
	\operatorname{dist}((v,x),\mathcal S)
	\leq
	\kappa_{\rm KKT}r_{\rm KKT}(v,x),
	\qquad
	\forall (v,x)\in
	\mathbb B((\bar v,\bar x),\rho_0),
	\]
	where 	$\kappa_{\rm KKT}	:=	\sqrt{\kappa_P^2c_P^2+\kappa_D^2c_D^2}.	$
\end{theorem}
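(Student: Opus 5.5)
The argument combines the product-distance identity \eqref{eq:kkt-product-distance} with the two local error bounds and the reverse residual comparisons \eqref{eq:reverse-residual-comparisons}. By Proposition~\ref{prop:kkt-solution-product}, \(\mathcal S=\mathcal V\times\Lambda\), so for every \((v,x)\) we have
\[
\operatorname{dist}^2((v,x),\mathcal S)=\operatorname{dist}^2(v,\mathcal V)+\operatorname{dist}^2(x,\Lambda).
\]
It therefore suffices to bound each term on the right by a multiple of \(r_{\rm KKT}(v,x)\), uniformly on the ball \(\mathbb B((\bar v,\bar x),\rho_0)\).

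Fix \((v,x)\in\mathbb B((\bar v,\bar x),\rho_0)\) with \(\rho_0=\min\{\rho_P,\rho_D,\rho_R\}\). Since the product norm dominates each component, \(\|v-\bar v\|\le\|(v,x)-(\bar v,\bar x)\|\le\rho_0\le\rho_P\), and likewise \(\|x-\bar x\|\le\rho_D\). Hence both local error bounds of Definition~\ref{def:local-primal-prox-gradient-eb} apply, giving \(\operatorname{dist}(v,\mathcal V)\le\kappa_P\|r_P(v)\|\) and \(\operatorname{dist}(x,\Lambda)\le\kappa_D r_D(x)\). Also \((v,x)\in\mathbb B((\bar v,\bar x),\rho_R)\), so \eqref{eq:reverse-residual-comparisons} yields
\[
\operatorname{dist}(v,\mathcal V)\le\kappa_Pc_P\,r_{\rm KKT}(v,x),\qquad \operatorname{dist}(x,\Lambda)\le\kappa_Dc_D\,r_{\rm KKT}(v,x).
\]
Squaring, adding, and taking square roots gives \(\operatorname{dist}((v,x),\mathcal S)\le\sqrt{\kappa_P^2c_P^2+\kappa_D^2c_D^2}\,r_{\rm KKT}(v,x)\), which is the claim.

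Two bookkeeping points need care; the argument has no substantive obstacle. First, \(r_{\rm KKT}\) or \(r_D\) may equal \(+\infty\), for instance when \(\partial f_2^*(x)=\emptyset\) or \(\partial d(x)=\emptyset\). If \(r_{\rm KKT}(v,x)=+\infty\), the desired inequality is trivial. Otherwise the chain of inequalities above involves only finite quantities, since \eqref{eq:reverse-residual-comparisons} forces \(r_D(x)<+\infty\). Second, the ball inclusions hold because the componentwise distances are bounded by the product distance. Metric subregularity in the sense of Definition~\ref{def:kkt-msr-pointwise} then follows with \(\epsilon_{\rm KKT}=\rho_0\).
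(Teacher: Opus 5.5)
Your proof is correct and follows the paper's own argument essentially step for step. Both use the product-distance identity coming from $\mathcal S=\mathcal V\times\Lambda$, componentwise ball inclusions to invoke both local error bounds, and the reverse residual comparisons, with the case $r_{\rm KKT}(v,x)=+\infty$ treated separately as trivial.
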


\begin{proof}
	See Appendix~\ref{app:proof:thm:primal-dual-eb-implies-kkt-msr}
\end{proof}

The residual comparisons in the preceding theorem are abstract. The following proposition provides concrete conditions under which these comparisons hold.
\begin{proposition}[Residual comparisons]
	\label{prop:structural-residual-comparisons}
	The following statements hold.
	
	\textnormal{(i)}
	If \(A\) has full row rank, then
	\[
	\|r_P(v)\|
	\leq
	\sqrt{1+4\|A^\dagger\|^2}\,
	r_{\rm KKT}(v,x)
	\]
	for every \((v,x)\in\mathbb R^n\times\mathbb R^m\).
	
	\textnormal{(ii)}
	If \(f_1\) is \(\sigma\)-strongly convex for some \(\sigma>0\), then
	\[
	r_D(x)
	\leq
	\sqrt{1+\frac{\|A\|^2}{\sigma^2}}\,
	r_{\rm KKT}(v,x)
	\]
	for every \((v,x)\in\mathbb R^n\times\mathbb R^m\).
\end{proposition}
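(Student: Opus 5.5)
The plan is to prove both estimates by one device. Fix \((v,x)\) and set \(r_1:=\nabla f_1(v)+A^\top x\). If \(\partial f_2^*(x)=\emptyset\), then \(r_{\rm KKT}(v,x)=+\infty\) and there is nothing to prove. Otherwise \(\partial f_2^*(x)\) is nonempty, closed and convex, so we may pick \(u\in\partial f_2^*(x)\) with
\[
s:=\|Av+b-u\|=\operatorname{dist}\bigl(Av+b,\partial f_2^*(x)\bigr),
\qquad
r_{\rm KKT}(v,x)=\bigl(\|r_1\|^2+s^2\bigr)^{1/2}.
\]
In each part we bound the relevant residual by a linear combination \(\alpha\|r_1\|+\beta s\). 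The Cauchy--Schwarz inequality \(\alpha\|r_1\|+\beta s\le\sqrt{\alpha^2+\beta^2}\,(\|r_1\|^2+s^2)^{1/2}\) then gives the stated constants.

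For (i), full row rank of \(A\) makes \(A\) surjective, so \(\operatorname{ri}(\operatorname{dom} f_2)\) meets the range of \(A\cdot+b\). Hence the exact chain rule \(\partial G(w)=A^\top\partial f_2(Aw+b)\) holds, as in Assumption~\ref{ass:chainrule}. We will take
\[
w:=v-A^\dagger(Av+b-u).
\]
Because \(AA^\dagger=I\), this gives \(Aw+b=u\) and \(\|w-v\|\le\|A^\dagger\|s\). By conjugacy, \(u\in\partial f_2^*(x)\) gives \(x\in\partial f_2(u)=\partial f_2(Aw+b)\), so \(A^\top x\in\partial G(w)\). The prox optimality condition then yields \(w=\operatorname{prox}_G(w+A^\top x)\). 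Using nonexpansiveness of \(\operatorname{prox}_G\),
\[
\|r_P(v)\|
\le\bigl\|\operatorname{prox}_G(v-\nabla f_1(v))-\operatorname{prox}_G(w+A^\top x)\bigr\|+\|w-v\|
\le\|v-w-r_1\|+\|w-v\|
\le\|r_1\|+2\|A^\dagger\|s .
\]
This gives \(\alpha=1\) and \(\beta=2\|A^\dagger\|\), hence the constant \(\sqrt{1+4\|A^\dagger\|^2}\).

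For (ii), \(\sigma\)-strong convexity of \(f_1\) makes \(f_1^*\) finite-valued and differentiable on \(\mathbb R^n\), with \(\nabla f_1^*\) being \(1/\sigma\)-Lipschitz; moreover \(\nabla f_1^*(\nabla f_1(v))=v\). Since \(f_1^*\circ(-A^\top\cdot)\) is finite and continuous everywhere, the sum rule is exact:
\[
\partial d(x)=\partial f_2^*(x)-A\nabla f_1^*(-A^\top x)-b .
\]
Thus \(u-A\nabla f_1^*(-A^\top x)-b\in\partial d(x)\), and
\[
r_D(x)\le\|u-Av-b\|+\|A\|\,\bigl\|\nabla f_1^*(\nabla f_1(v))-\nabla f_1^*(-A^\top x)\bigr\|
\le s+\frac{\|A\|}{\sigma}\|r_1\|.
\]
This gives \(\alpha=\|A\|/\sigma\) and \(\beta=1\), hence the constant \(\sqrt{1+\|A\|^2/\sigma^2}\).

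The only delicate points are justifying the exact chain rule and the exact sum rule under the stated structure. I expect these to follow from standard constraint-qualification results in \cite{Rockafellar1970}: Theorem~23.9 for the chain rule under surjectivity of \(A\), and Theorem~23.8 for the sum rule with a finite continuous summand. A second point needing care is the case \(\partial f_2^*(x)=\emptyset\), which is handled by the convention \(r_{\rm KKT}(v,x)=+\infty\).
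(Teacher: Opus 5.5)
Your proposal is correct and follows essentially the same argument as the paper. In (i) you use the same point $w=v+A^\dagger(u-Av-b)$, the identity $w=\operatorname{prox}_G(w+A^\top x)$, and nonexpansiveness; in (ii) you use the exact sum rule with $v_x=\nabla f_1^*(-A^\top x)$. The differences are cosmetic. In (i) the exact chain rule is not actually needed: $A^\top x\in\partial G(w)$ follows from the always-valid inclusion $A^\top\partial f_2(Aw+b)\subseteq\partial G(w)$, which the paper checks directly via the subgradient inequality. In (ii) you bound $\|v-v_x\|$ through the $1/\sigma$-Lipschitz continuity of $\nabla f_1^*$ rather than through strong monotonicity of $\nabla f_1$, which is an equivalent route.
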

\begin{proof}
	See Appendix~\ref{app:lem:full-row-rank-primal-residual-comparison}.
\end{proof}
Thus, when \(A\) has full row rank and \(f_1\) is strongly convex, the residual-comparison conditions in Theorem~\ref{thm:primal-dual-eb-implies-kkt-msr} hold globally.
%
%
%
%
%

The preceding theorem uses both the primal and dual   local error bounds. We next consider whether one   local error bound alone is sufficient.

\begin{theorem} 	\label{thm:primal-eb-implies-kkt-msr}	
	Assume that the primal   local error bound holds at 	\(\bar v\), with constants \(\rho_P>0\) and \(\kappa_P>0\).
	
	Suppose further that there exist constants	\(\rho_1,c_P,\alpha,\beta>0\) such that
	\begin{equation}
		\label{eq:primal-residual-kkt-domination}	\| r_P(v)\|	\leq	c_P r_{\rm KKT}(v,x)
	\end{equation}
	and
	\begin{equation}
		\label{eq:dual-distance-primal-control}
		\operatorname{dist}(x,\Lambda)
		\leq
		\alpha r_{\rm KKT}(v,x)
		+
		\beta\operatorname{dist}(v,\mathcal V)
	\end{equation}
	for every
	\((v,x)\in\mathbb B((\bar v,\bar x),\rho_1)\).
	
	Then the reduced KKT mapping \(\mathcal M\) is metrically subregular at
	\((\bar v,\bar x)\) for \(0\). More precisely, with $\rho:=\min\{\rho_P,\rho_1\},$
	one has
	\[
	\operatorname{dist}((v,x),\mathcal S)
	\leq
	\kappa_{\rm KKT}r_{\rm KKT}(v,x),
	\qquad
	\forall (v,x)\in
	\mathbb B((\bar v,\bar x),\rho),
	\]
	where $	\kappa_{\rm KKT}	:=	\sqrt{
		\kappa_P^2c_P^2
		+
		\left(
		\alpha+\beta\kappa_Pc_P
		\right)^2
	}.$
\end{theorem}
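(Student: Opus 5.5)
The argument is a direct chaining of the three hypotheses, combined through the product-distance identity \eqref{eq:kkt-product-distance}.

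Fix \((v,x)\in\mathbb B((\bar v,\bar x),\rho)\) with \(\rho=\min\{\rho_P,\rho_1\}\). If \(r_{\rm KKT}(v,x)=+\infty\) there is nothing to prove, so assume it is finite. Since \(\|v-\bar v\|\le\|(v,x)-(\bar v,\bar x)\|\le\rho\le\rho_P\), we have \(v\in\mathbb B(\bar v,\rho_P)\). The primal local error bound therefore applies, and combined with \eqref{eq:primal-residual-kkt-domination} (valid because \(\rho\le\rho_1\)) it gives
\[
\operatorname{dist}(v,\mathcal V)\le\kappa_P\|r_P(v)\|\le\kappa_Pc_P\,r_{\rm KKT}(v,x).
\]

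Next, insert this bound into \eqref{eq:dual-distance-primal-control}, which also holds on \(\mathbb B((\bar v,\bar x),\rho_1)\). This yields
\[
\operatorname{dist}(x,\Lambda)\le\alpha r_{\rm KKT}(v,x)+\beta\kappa_Pc_P\,r_{\rm KKT}(v,x)=(\alpha+\beta\kappa_Pc_P)\,r_{\rm KKT}(v,x).
\]

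Finally, Proposition~\ref{prop:kkt-solution-product} gives \(\mathcal S=\mathcal V\times\Lambda\), so \eqref{eq:kkt-product-distance} applies. Squaring and adding the two estimates, then taking square roots, gives \(\operatorname{dist}((v,x),\mathcal S)\le\kappa_{\rm KKT}r_{\rm KKT}(v,x)\) with the stated \(\kappa_{\rm KKT}\). This is exactly the inequality of Definition~\ref{def:kkt-msr-pointwise}, with \(\epsilon_{\rm KKT}=\rho\).

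There is no real obstacle here. The only points that need care are the localization check, i.e. that membership in the product ball implies membership in the ball around \(\bar v\) through the product norm, and the handling of an infinite KKT residual.
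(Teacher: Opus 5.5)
Your proposal is correct and follows the paper's proof essentially step for step. You treat an infinite KKT residual the same way, localize to \(\mathbb B(\bar v,\rho_P)\) through the product norm, chain the primal local error bound with \eqref{eq:primal-residual-kkt-domination}, substitute into \eqref{eq:dual-distance-primal-control}, and combine the two estimates through \(\mathcal S=\mathcal V\times\Lambda\) and \eqref{eq:kkt-product-distance} to obtain the stated \(\kappa_{\rm KKT}\).
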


\begin{proof}
	See Appendix~\ref{app:proof-primal-eb-implies-kkt-msr}.
\end{proof}

Theorem  \ref{thm:primal-eb-implies-kkt-msr} is stated under abstract residual and distance-control conditions. The following corollary gives a concrete sufficient condition.
\begin{corollary}
\label{cor:primal-eb-structural-kkt-msr}

Assume that the   local error bound holds at \(\bar v\) for the primal problem \eqref{eq:P}. Assume further that \(A\) has full row rank and that \(\nabla f_1\) is locally Lipschitz continuous around \(\bar v\).
Then the reduced KKT mapping \(\mathcal M\) is metrically subregular at \((\bar v,\bar x)\) for \(0\).
\end{corollary}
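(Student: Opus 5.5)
The plan is to apply Theorem~\ref{thm:primal-eb-implies-kkt-msr}. Its first hypothesis, the residual comparison \eqref{eq:primal-residual-kkt-domination}, holds globally with $c_P:=\sqrt{1+4\|A^\dagger\|^2}$ by Proposition~\ref{prop:structural-residual-comparisons}(i). It then remains to prove the distance-control inequality \eqref{eq:dual-distance-primal-control} on some ball $\mathbb B((\bar v,\bar x),\rho_1)$, using full row rank of $A$ and local Lipschitz continuity of $\nabla f_1$.

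The key structural fact is that full row rank forces the dual solution set to be a singleton. For any $x\in\Lambda$, the pair $(\bar v,x)\in\mathcal V\times\Lambda=\mathcal S$ by Proposition~\ref{prop:kkt-solution-product}. Hence $A^\top x=-\nabla f_1(\bar v)$, and since $A^\top$ is injective, $x=-(A^\dagger)^\top\nabla f_1(\bar v)=\bar x$. In the same way, every $\hat v\in\mathcal V$ satisfies $A^\top\bar x=-\nabla f_1(\hat v)$, so $\nabla f_1$ is constant on $\mathcal V$. Therefore $\Lambda=\{\bar x\}$ and $\operatorname{dist}(x,\Lambda)=\|x-\bar x\|$.

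Now take $(v,x)$ near $(\bar v,\bar x)$. Let $L$ be a Lipschitz constant of $\nabla f_1$ on $\mathbb B(\bar v,2\rho_1)$, and let $\hat v:=\operatorname{proj}_{\mathcal V}(v)$. Since $\bar v\in\mathcal V$, we have $\|\hat v-v\|\le\|v-\bar v\|\le\rho_1$, so $\hat v\in\mathbb B(\bar v,2\rho_1)$. Writing $\bar x=(A^\top)^\dagger$-preimage, i.e.\ $A^\top\bar x=-\nabla f_1(\hat v)$, we get
\[
\|x-\bar x\|\le\|(A^\top)^\dagger\|\,\|A^\top x+\nabla f_1(\hat v)\|
\le\|A^\dagger\|\bigl(\|A^\top x+\nabla f_1(v)\|+L\|v-\hat v\|\bigr).
\]
The first inequality uses $\|A^\top y\|\ge\|y\|/\|A^\dagger\|$, which follows from injectivity of $A^\top$. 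Since $\|A^\top x+\nabla f_1(v)\|\le r_{\rm KKT}(v,x)$, this gives \eqref{eq:dual-distance-primal-control} with $\alpha=\|A^\dagger\|$ and $\beta=L\|A^\dagger\|$.

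Theorem~\ref{thm:primal-eb-implies-kkt-msr} then yields metric subregularity with $\rho=\min\{\rho_P,\rho_1\}$. There is no real obstacle; the only delicate points are the following.
\begin{enumerate}[label=\textnormal{(\alph*)},leftmargin=*]
\item Choosing $\rho_1$ small enough that both $v$ and its projection $\hat v$ lie in the region where $\nabla f_1$ is Lipschitz. Projecting onto the convex set $\mathcal V$ and using $\bar v\in\mathcal V$ handles this.
\item Recognizing that $\Lambda$ is the singleton $\{\bar x\}$. This is what converts the dual distance into a quantity controlled by the first KKT block.
\end{enumerate}
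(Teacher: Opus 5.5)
Your proposal is correct and follows the paper's proof essentially step for step. You deduce $\Lambda=\{\bar x\}$ from injectivity of $A^\top$, obtain \eqref{eq:dual-distance-primal-control} with $\alpha=\|A^\dagger\|$ and $\beta=L\|A^\dagger\|$ via the projection onto $\mathcal V$, take $c_P=\sqrt{1+4\|A^\dagger\|^2}$ from Proposition~\ref{prop:structural-residual-comparisons}\textnormal{(i)}, and conclude with Theorem~\ref{thm:primal-eb-implies-kkt-msr}; the paper places $\hat v$ in the Lipschitz region via $\|\hat v-\bar v\|\le\|v-\bar v\|$ instead of your triangle-inequality ball $\mathbb B(\bar v,2\rho_1)$, which changes nothing.
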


\begin{proof}
Since \(A\) has full row rank, \(A^\top\) is injective. Moreover, Proposition~\ref{prop:kkt-solution-product} gives
\(\mathcal S=\mathcal V\times\Lambda\). If \(x_1,x_2\in\Lambda\), then
\[
    \nabla f_1(\bar v)+A^\top x_1=0,
    \qquad
    \nabla f_1(\bar v)+A^\top x_2=0.
\]
Thus \(A^\top(x_1-x_2)=0\), and the injectivity of \(A^\top\) implies \(x_1=x_2\). Hence $ \Lambda=\{\bar x\}.$

Since \(F\) is closed and convex, \(\mathcal V\) is nonempty, closed, and convex. Hence the Euclidean projection onto \(\mathcal V\) is single-valued and nonexpansive. For every \(v\) near \(\bar v\), let \(v^\Pi\) be the Euclidean projection of \(v\) onto \(\mathcal V\). Since \(\bar v\in\mathcal V\), we have
\[
    \|v^\Pi-\bar v\|
    \leq
    \|v-\bar v\|.
\]
Let \(L>0\) be a local Lipschitz constant of \(\nabla f_1\) around \(\bar v\). After possibly shrinking the neighborhood, both \(v\) and \(v^\Pi\) lie in a neighborhood on which \(\nabla f_1\) is \(L\)-Lipschitz continuous.

Since \(v^\Pi\in\mathcal V\), \(\bar x\in\Lambda\), and \(\mathcal S=\mathcal V\times\Lambda\), we have
\[
    \nabla f_1(v^\Pi)+A^\top\bar x=0.
\]
Since $(A^\dagger)^\top A^\top=I_m,$ we have
\[
\begin{aligned}
\operatorname{dist}(x,\Lambda)
&=
\|x-\bar x\|\\
&\leq
\|A^\dagger\|
\|A^\top(x-\bar x)\|\\
&\leq
\|A^\dagger\|
\left(
    \|\nabla f_1(v)+A^\top x\|
    +
    \|\nabla f_1(v)-\nabla f_1(v^\Pi)\|
\right)\\
&\leq
\|A^\dagger\|r_{\rm KKT}(v,x)
+
L\|A^\dagger\|\operatorname{dist}(v,\mathcal V).
\end{aligned}
\]
Thus \eqref{eq:dual-distance-primal-control} holds locally with $\alpha=\|A^\dagger\|, \ \beta=L\|A^\dagger\|.$
    
By Proposition~\ref{prop:structural-residual-comparisons} \textnormal{(i)},
\[
    \| r_P(v)\|
    \leq
    \sqrt{1+4\|A^\dagger\|^2}\,
    r_{\rm KKT}(v,x),
\]
so that \eqref{eq:primal-residual-kkt-domination} holds with $c_P=\sqrt{1+4\|A^\dagger\|^2}.$
    
The conclusion now follows from Theorem~\ref{thm:primal-eb-implies-kkt-msr}.
\end{proof}

We next give the corresponding result for the dual   local error bound.
\begin{theorem}	\label{thm:dual-eb-implies-kkt-msr}	
	Assume that the   local error bound holds at	\(\bar x\) for the dual problem, with constants	\((\rho_D,\kappa_D)\).	Suppose further that there exist constants	\(\rho_1,c_D,\alpha,\beta>0\) such that
	\begin{equation}
		\label{eq:dual-residual-kkt-domination}
		r_D(x)
		\leq
		c_D r_{\rm KKT}(v,x)
	\end{equation}
	and
	\begin{equation}
		\label{eq:primal-distance-dual-control}
		\operatorname{dist}(v,\mathcal V)
		\leq
		\alpha r_{\rm KKT}(v,x)
		+
		\beta\operatorname{dist}(x,\Lambda)
	\end{equation}
	for every
	\((v,x)\in\mathbb B((\bar v,\bar x),\rho_1)\).
	
	Then the reduced KKT mapping \(\mathcal M\) is metrically subregular at
	\((\bar v,\bar x)\) for \(0\). More precisely, with $	\rho:=\min\{\rho_D,\rho_1\},$
	one has
	\[
	\operatorname{dist}((v,x),\mathcal S)
	\leq
	\kappa_{\rm KKT}r_{\rm KKT}(v,x),
	\qquad
	\forall (v,x)\in
	\mathbb B((\bar v,\bar x),\rho),
	\]
	where $	\kappa_{\rm KKT}	:=
	\sqrt{
		\left(
		\alpha+\beta\kappa_Dc_D
		\right)^2
		+
		\kappa_D^2c_D^2	}.$
\end{theorem}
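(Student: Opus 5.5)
The argument mirrors Theorem~\ref{thm:primal-eb-implies-kkt-msr} with the roles of the primal and dual variables exchanged. Fix \((v,x)\in\mathbb B((\bar v,\bar x),\rho)\), where \(\rho=\min\{\rho_D,\rho_1\}\). Since \(\|x-\bar x\|\le\|(v,x)-(\bar v,\bar x)\|\le\rho\le\rho_D\), the point \(x\) lies in the neighborhood on which the dual local error bound is valid. Hence
\[
\operatorname{dist}(x,\Lambda)\le\kappa_D r_D(x).
\]
Because \((v,x)\in\mathbb B((\bar v,\bar x),\rho_1)\), the comparison \eqref{eq:dual-residual-kkt-domination} then yields the first key estimate
\[
\operatorname{dist}(x,\Lambda)\le\kappa_Dc_D\,r_{\rm KKT}(v,x).
\]
Some care is needed with infinite values. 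If \(r_{\rm KKT}(v,x)=+\infty\), the desired inequality holds trivially, so we may assume \(r_{\rm KKT}(v,x)<+\infty\). Then \eqref{eq:dual-residual-kkt-domination} forces \(r_D(x)<+\infty\), and all quantities above are finite.

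Next, we insert this estimate into the distance-control condition \eqref{eq:primal-distance-dual-control}, which is valid on the same ball. This gives
\[
\operatorname{dist}(v,\mathcal V)\le\alpha r_{\rm KKT}(v,x)+\beta\kappa_Dc_D\,r_{\rm KKT}(v,x)=(\alpha+\beta\kappa_Dc_D)\,r_{\rm KKT}(v,x).
\]

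Finally, Proposition~\ref{prop:kkt-solution-product} gives \(\mathcal S=\mathcal V\times\Lambda\), so the product-distance identity \eqref{eq:kkt-product-distance} applies. Squaring the two estimates and adding them, we obtain
\[
\operatorname{dist}^2((v,x),\mathcal S)\le\bigl((\alpha+\beta\kappa_Dc_D)^2+\kappa_D^2c_D^2\bigr)\,r_{\rm KKT}^2(v,x).
\]
Taking square roots gives the claim with the stated constant \(\kappa_{\rm KKT}\), and this is exactly metric subregularity of \(\mathcal M\) at \((\bar v,\bar x)\) for \(0\) in the sense of Definition~\ref{def:kkt-msr-pointwise}.

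There is no genuine obstacle in this argument. The only points needing attention are that the single radius \(\rho\) makes every hypothesis available simultaneously, which holds because the projection of the ball onto the \(x\)-component stays within \(\rho_D\), and the handling of infinite residuals described above.
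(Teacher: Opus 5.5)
Your proposal is correct and follows the same route as the paper's proof: restrict to the ball of radius \(\rho=\min\{\rho_D,\rho_1\}\), dispose of the case \(r_{\rm KKT}(v,x)=+\infty\), and chain the dual local error bound with \eqref{eq:dual-residual-kkt-domination}. You then feed the resulting dual estimate into \eqref{eq:primal-distance-dual-control} and combine the two bounds through \(\mathcal S=\mathcal V\times\Lambda\) and the product-distance identity.
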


\begin{proof}
	See Appendix~\ref{app:proof:thm:dual-eb-implies-kkt-msr}.
\end{proof}

The following corollary gives a concrete sufficient condition for Theorem \ref{thm:dual-eb-implies-kkt-msr}.

\begin{corollary} \label{cor:dual-eb-structural-kkt-msr}

Assume that the   local error bound holds at \(\bar x\) for the dual problem \eqref{eq:Dmin}. Assume further that \(f_1\) is \(\sigma\)-strongly convex for some \(\sigma>0\). Then the reduced KKT mapping \(\mathcal M\) is metrically subregular at \((\bar v,\bar x)\) for \(0\).
\end{corollary}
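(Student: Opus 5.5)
The plan is to verify the two hypotheses of Theorem~\ref{thm:dual-eb-implies-kkt-msr} and then invoke it, in the same way Corollary~\ref{cor:primal-eb-structural-kkt-msr} was obtained from Theorem~\ref{thm:primal-eb-implies-kkt-msr}.

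The residual comparison \eqref{eq:dual-residual-kkt-domination} is immediate. By Proposition~\ref{prop:structural-residual-comparisons}\,(ii), it holds globally with
\[
c_D=\sqrt{1+\|A\|^2/\sigma^2}.
\]

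The remaining task is the distance control \eqref{eq:primal-distance-dual-control}. Strong convexity of \(f_1\) makes \(F\) strongly convex, so \(\mathcal V=\{\bar v\}\) and \(\operatorname{dist}(v,\mathcal V)=\|v-\bar v\|\). Take \(x^\Pi:=\operatorname{proj}_\Lambda(x)\). Since \(\mathcal S=\mathcal V\times\Lambda\) by Proposition~\ref{prop:kkt-solution-product}, we have
\[
\nabla f_1(\bar v)+A^\top x^\Pi=0.
\]
Strong convexity gives strong monotonicity of the gradient:
\[
\sigma\|v-\bar v\|^2\le\langle\nabla f_1(v)-\nabla f_1(\bar v),v-\bar v\rangle,
\]
and hence \(\sigma\|v-\bar v\|\le\|\nabla f_1(v)-\nabla f_1(\bar v)\|\). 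Writing
\[
\nabla f_1(v)-\nabla f_1(\bar v)=\bigl(\nabla f_1(v)+A^\top x\bigr)-A^\top(x-x^\Pi)
\]
then yields
\[
\|v-\bar v\|\le \tfrac1\sigma\|\nabla f_1(v)+A^\top x\|+\tfrac{\|A\|}{\sigma}\|x-x^\Pi\|
\le \tfrac1\sigma r_{\rm KKT}(v,x)+\tfrac{\|A\|}{\sigma}\operatorname{dist}(x,\Lambda).
\]
This is \eqref{eq:primal-distance-dual-control}, valid for all \((v,x)\), with \(\alpha=1/\sigma\) and \(\beta=\|A\|/\sigma\). If \(A=0\), replace \(\beta\) by any positive constant, since the theorem requires \(\beta>0\).

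I see no real obstacle. The only point that needs care is justifying \(\mathcal V=\{\bar v\}\) and the identity \(\nabla f_1(\bar v)=-A^\top x^\Pi\), and both follow from Assumption~\ref{ass:basic-primal-dual-setting} and Proposition~\ref{prop:kkt-solution-product}. Applying Theorem~\ref{thm:dual-eb-implies-kkt-msr} with \(\rho_1\) arbitrary, say \(\rho_1=\rho_D\), gives metric subregularity of \(\mathcal M\) at \((\bar v,\bar x)\) for \(0\).
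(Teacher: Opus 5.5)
Your proposal is correct and follows the paper's proof essentially step for step. You take \(c_D=\sqrt{1+\|A\|^2/\sigma^2}\) from Proposition~\ref{prop:structural-residual-comparisons}\,(ii), use the same projection \(x^\Pi=\operatorname{proj}_\Lambda(x)\) and strong-monotonicity estimate to get \(\alpha=1/\sigma\) and \(\beta=\|A\|/\sigma\), and then invoke Theorem~\ref{thm:dual-eb-implies-kkt-msr}; your remark that \(\beta\) must be enlarged to a positive constant when \(A=0\) is a small point of care that the paper omits.
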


\begin{proof}
Since \(f_1\) is \(\sigma\)-strongly convex and \(G=f_2\circ(A\cdot+b)\) is convex, \(F=f_1+G\) is \(\sigma\)-strongly convex. Hence the primal solution is unique, and $\mathcal V=\{\bar v\}.$

Since \(f_1\) is strongly convex, \(f_1^*\) is finite and continuously differentiable on \(\mathbb R^n\). It follows that \(d\) is closed and convex. Hence \(\Lambda\) is nonempty, closed, and convex. For every \(x\in\mathbb R^m\), let $x^\Pi:=\operatorname{proj}_{\Lambda}(x).$ Since \(\bar v\in\mathcal V\), \(x^\Pi\in\Lambda\), and \(\mathcal S=\mathcal V\times\Lambda\), we have
\[
    \nabla f_1(\bar v)+A^\top x^\Pi=0.
\]

By the strong monotonicity of \(\nabla f_1\),
\[
\begin{aligned}
\sigma\|v-\bar v\|^2
&\leq
\left\langle
\nabla f_1(v)-\nabla f_1(\bar v),v-\bar v
\right\rangle\\
&=
\left\langle
\nabla f_1(v)+A^\top x,v-\bar v
\right\rangle
-
\left\langle
A^\top(x-x^\Pi),v-\bar v
\right\rangle\\
&\leq
\left(
    \|\nabla f_1(v)+A^\top x\|
    +
    \|A\|\|x-x^\Pi\|
\right)
\|v-\bar v\|.
\end{aligned}
\]
If \(v=\bar v\), the following estimate holds trivially. Otherwise, dividing the preceding inequality by \(\|v-\bar v\|\) gives
\[
\begin{aligned}
\operatorname{dist}(v,\mathcal V)
&=
\|v-\bar v\|\\
&\leq
\frac{1}{\sigma}
\|\nabla f_1(v)+A^\top x\|
+
\frac{\|A\|}{\sigma}
\operatorname{dist}(x,\Lambda)\\
&\leq
\frac{1}{\sigma}r_{\rm KKT}(v,x)
+
\frac{\|A\|}{\sigma}
\operatorname{dist}(x,\Lambda).
\end{aligned}
\]
Thus \eqref{eq:primal-distance-dual-control} holds with$
    \alpha=\frac{1}{\sigma},
    \ 
    \beta=\frac{\|A\|}{\sigma}.$

By Proposition~\ref{prop:structural-residual-comparisons} \textnormal{(ii)},
\eqref{eq:dual-residual-kkt-domination} holds with $ c_D  =    \sqrt{1+\frac{\|A\|^2}{\sigma^2}}.$
The conclusion now follows from Theorem~\ref{thm:dual-eb-implies-kkt-msr}.
\end{proof}

\subsection{Equivalence results for KKT metric subregularity and   local error bounds}\label{subsec:kkt-eb-equivalence}

Combining the implication results above yields the following abstract local equivalence.

\begin{corollary}\label{cor:abstract-local-equivalence}
	Suppose that Assumptions~\ref{ass:chainrule}, 	\ref{ass:primal-localization}, 	\ref{ass:dual-subdifferential}, and 	\ref{ass:dual-localization} hold. 	Assume further that the reverse residual comparisons \eqref{eq:reverse-residual-comparisons} hold locally around
	\((\bar v,\bar x)\).
	Then the following statements are equivalent:
	\begin{enumerate}
		\item The reduced KKT mapping \(\mathcal M\) is metrically
		subregular at \((\bar v,\bar x)\) for \(0\).
		
		\item The local error bound holds at \(\bar v\) for the primal
		problem~\eqref{eq:P}, and the local error bound holds at
		\(\bar x\) for the dual problem~\eqref{eq:Dmin}.
	\end{enumerate}
\end{corollary}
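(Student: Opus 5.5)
The proof combines the implication results of Subsections~\ref{subsec:kkt-msr-implies-eb} and~\ref{subsec:eb-implies-kkt-msr}; it needs no new estimates, only a check that each theorem's hypotheses are among those assumed in the statement.

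For (1)$\Rightarrow$(2), we assume that $\mathcal M$ is metrically subregular at $(\bar v,\bar x)$ for $0$, with constants $\kappa_{\rm KKT},\epsilon_{\rm KKT}$. Assumptions~\ref{ass:chainrule} and~\ref{ass:primal-localization} are in force, so Theorem~\ref{thm:kkt-msr-implies-primal-eb} gives some $\rho_P>0$ with $\operatorname{dist}(v,\mathcal V)\le \kappa_{\rm KKT}\sqrt{1+\|A\|^2}\,\|r_P(v)\|$ on $\mathbb B(\bar v,\rho_P)$. This is the primal local error bound of Definition~\ref{def:local-primal-prox-gradient-eb}(i). Likewise, Assumptions~\ref{ass:dual-subdifferential} and~\ref{ass:dual-localization} allow us to apply Theorem~\ref{thm:kkt-msr-implies-dual-eb}. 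It gives $\rho_D>0$ with $\operatorname{dist}(x,\Lambda)\le\kappa_{\rm KKT}r_D(x)$ on $\mathbb B(\bar x,\rho_D)$. Points where $r_D(x)=+\infty$ were already handled inside that theorem. This is the dual local error bound of Definition~\ref{def:local-dual-subdifferential-eb}(ii).

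For (2)$\Rightarrow$(1), we assume both local error bounds, with constants $(\rho_P,\kappa_P)$ and $(\rho_D,\kappa_D)$. By hypothesis the reverse residual comparisons \eqref{eq:reverse-residual-comparisons} hold on some ball $\mathbb B((\bar v,\bar x),\rho_R)$, with constants $c_P,c_D$. These are exactly the hypotheses of Theorem~\ref{thm:primal-dual-eb-implies-kkt-msr}. That theorem yields metric subregularity of $\mathcal M$ at $(\bar v,\bar x)$ for $0$ on the ball of radius $\rho_0=\min\{\rho_P,\rho_D,\rho_R\}$, with modulus $\sqrt{\kappa_P^2c_P^2+\kappa_D^2c_D^2}$.

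There is no real obstacle. The only point needing care is the meaning of ``hold locally around $(\bar v,\bar x)$'': it must give a single radius $\rho_R>0$ and constants $c_P,c_D$ valid for both comparisons simultaneously. If the two comparisons come with different radii, we take $\rho_R$ to be the smaller one. Note also that the centres $\bar v$ and $\bar x$ must be the components of the same fixed KKT pair. Proposition~\ref{prop:kkt-solution-product} guarantees this is consistent, since $(\bar v,\bar x)\in\mathcal S=\mathcal V\times\Lambda$.
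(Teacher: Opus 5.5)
Your proposal is correct and matches the paper's proof: (1)$\Rightarrow$(2) follows from Theorems~\ref{thm:kkt-msr-implies-primal-eb} and~\ref{thm:kkt-msr-implies-dual-eb}, and (2)$\Rightarrow$(1) follows from Theorem~\ref{thm:primal-dual-eb-implies-kkt-msr} together with \eqref{eq:reverse-residual-comparisons}. Your extra remarks are harmless bookkeeping that the paper leaves implicit: one common radius $\rho_R$ for both comparisons, and $\bar v\in\mathcal V$, $\bar x\in\Lambda$ because $(\bar v,\bar x)\in\mathcal S=\mathcal V\times\Lambda$.
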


\begin{proof}
	If \(\mathcal M\) is metrically subregular at 	\((\bar v,\bar x)\) for \(0\), then 	Theorems~\ref{thm:kkt-msr-implies-primal-eb} and 	\ref{thm:kkt-msr-implies-dual-eb} yield the primal and dual local
	error bounds, respectively.
	
	Conversely, if the primal and dual local error bounds hold, then 	Theorem~\ref{thm:primal-dual-eb-implies-kkt-msr}, together with 	\eqref{eq:reverse-residual-comparisons}, yields metric subregularity 	of \(\mathcal M\) at \((\bar v,\bar x)\) for \(0\).
\end{proof}

We next derive more explicit equivalence results under structural conditions, beginning with the primal problem.
\begin{corollary}	\label{cor:primal-eb-kkt-msr-equivalence}
	Assume that \(A\) has full row rank and that \(\nabla f_1\) is locally Lipschitz continuous around \(\bar v\). Then the following statements are equivalent:
	\begin{enumerate}
		\item The reduced KKT mapping \(\mathcal M\) is metrically subregular at \((\bar v,\bar x)\) for \(0\).
		\item The   local error bound holds at \(\bar v\) for the primal problem \eqref{eq:P}.
	\end{enumerate}
\end{corollary}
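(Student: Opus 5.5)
The equivalence will follow by combining two results already established in the paper, one for each direction.

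For the implication (1)\(\Rightarrow\)(2), we invoke Corollary~\ref{cor:full-row-rank-kkt-msr-implies-primal-eb}. It needs only that \(A\) has full row rank and that \(\mathcal M\) is metrically subregular at \((\bar v,\bar x)\) for \(0\). Its conclusion is exactly the primal local error bound at \(\bar v\). The local Lipschitz continuity of \(\nabla f_1\) is not used in this direction.

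For the implication (2)\(\Rightarrow\)(1), we invoke Corollary~\ref{cor:primal-eb-structural-kkt-msr}. Its hypotheses are the primal local error bound at \(\bar v\), full row rank of \(A\), and local Lipschitz continuity of \(\nabla f_1\) around \(\bar v\). These are precisely (2) together with the standing assumptions of the present corollary. Its conclusion is metric subregularity of \(\mathcal M\) at \((\bar v,\bar x)\) for \(0\).

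Both invoked corollaries concern the same fixed pair \((\bar v,\bar x)\in\mathcal S\), so the two implications together give the equivalence, and the proof reduces to these two citations. There is no real obstacle. The only point to check is that Corollary~\ref{cor:full-row-rank-kkt-msr-implies-primal-eb} indeed supplies Assumptions~\ref{ass:chainrule} and~\ref{ass:primal-localization} from full row rank alone, as stated there. For Assumption~\ref{ass:chainrule}, full row rank makes \(A\) surjective, so the range of \(A\cdot+b\) meets the relative interior of \(\operatorname{dom}f_2\) and the exact chain rule holds. For Assumption~\ref{ass:primal-localization}, one selects \(x_P(v)\) through \(A^\dagger\) applied to the relevant subgradient. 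By the proof of Corollary~\ref{cor:primal-eb-structural-kkt-msr}, \(\Lambda=\{\bar x\}\), which makes this selection continuous at \(\bar v\) with \(x_P(\bar v)=\bar x\). Both facts are taken as established in the cited results.
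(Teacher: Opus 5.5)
Your proof is correct and matches the paper's argument: (1)$\Rightarrow$(2) follows from Corollary~\ref{cor:full-row-rank-kkt-msr-implies-primal-eb}, and (2)$\Rightarrow$(1) follows from Corollary~\ref{cor:primal-eb-structural-kkt-msr}. One minor correction to your side check: in the paper, continuity of the selection comes from the explicit formula \(x_P(v)=-(AA^\top)^{-1}A\bigl(\nabla f_1(v)+r_P(v)\bigr)\), which applies \((A^\dagger)^\top\) rather than \(A^\dagger\), together with continuity of \(\nabla f_1\) and \(r_P\); the identity \(x_P(\bar v)=\bar x\) comes from the KKT equation at \((\bar v,\bar x)\) and injectivity of \(A^\top\), not from \(\Lambda=\{\bar x\}\).
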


\begin{proof}
	The first statement implies the second by Corollary~\ref{cor:full-row-rank-kkt-msr-implies-primal-eb}.
	The second statement implies the first by Corollary~\ref{cor:primal-eb-structural-kkt-msr}.
\end{proof}

The corresponding dual equivalence is given next.
\begin{corollary}\label{cor:dual-eb-kkt-msr-equivalence}
 Assume that \(f_1\) is \(\sigma\)-strongly convex for some \(\sigma>0\). Then the following statements are equivalent:
\begin{enumerate}
\item The reduced KKT mapping \(\mathcal M\) is metrically subregular at \((\bar v,\bar x)\) for \(0\).
\item The   local error bound holds at \(\bar x\) for the dual problem \eqref{eq:Dmin}.
\end{enumerate}
\end{corollary}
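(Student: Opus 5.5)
The plan mirrors the proof of Corollary~\ref{cor:primal-eb-kkt-msr-equivalence}: I would prove the two directions separately by invoking the two results already established under strong convexity of \(f_1\).

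For (1)\(\Rightarrow\)(2), I would apply Corollary~\ref{cor:strong-convexity-kkt-msr-implies-dual-eb} directly. Under \(\sigma\)-strong convexity of \(f_1\), that corollary states that metric subregularity of \(\mathcal M\) at \((\bar v,\bar x)\) for \(0\) yields the dual local error bound at \(\bar x\). This is exactly statement (2).

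For (2)\(\Rightarrow\)(1), I would apply Corollary~\ref{cor:dual-eb-structural-kkt-msr}. Under the same strong-convexity hypothesis, it shows that the dual local error bound at \(\bar x\) implies metric subregularity of \(\mathcal M\) at \((\bar v,\bar x)\) for \(0\). That corollary already verifies the two hypotheses of Theorem~\ref{thm:dual-eb-implies-kkt-msr}:
\begin{itemize}
\item the residual domination \eqref{eq:dual-residual-kkt-domination}, via Proposition~\ref{prop:structural-residual-comparisons}\,(ii);
\item the distance control \eqref{eq:primal-distance-dual-control}, with constants \(\alpha=1/\sigma\) and \(\beta=\|A\|/\sigma\), via strong monotonicity of \(\nabla f_1\) and \(\mathcal V=\{\bar v\}\).
\end{itemize}

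I expect no real obstacle here, since both directions are already packaged in earlier corollaries. The one point to check is that \((\bar v,\bar x)\) is the same fixed element of \(\mathcal S\) in both corollaries, which holds by the standing convention of this section.
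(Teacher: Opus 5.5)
Your proposal is correct and is exactly the paper's argument: (1)$\Rightarrow$(2) follows from Corollary~\ref{cor:strong-convexity-kkt-msr-implies-dual-eb}, and (2)$\Rightarrow$(1) follows from Corollary~\ref{cor:dual-eb-structural-kkt-msr}. Your account of how the latter verifies the hypotheses of Theorem~\ref{thm:dual-eb-implies-kkt-msr} also matches the paper, including $\alpha=1/\sigma$, $\beta=\|A\|/\sigma$, and the residual comparison from Proposition~\ref{prop:structural-residual-comparisons}\,(ii).
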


\begin{proof}
The first statement implies the second by Corollary~\ref{cor:strong-convexity-kkt-msr-implies-dual-eb}.
The second statement implies the first by Corollary~\ref{cor:dual-eb-structural-kkt-msr}.
\end{proof}

Finally, combining the structural results on both sides yields a simultaneous primal--dual equivalence.
\begin{corollary}\label{cor:structural-local-equivalence}
	Assume that \(A\) has full row rank and that \(f_1\) is \(\sigma\)-strongly convex for some \(\sigma>0\).
	Then the following statements are equivalent:
	\begin{enumerate}
		\item The reduced KKT mapping \(\mathcal M\) is metrically subregular at \((\bar v,\bar x)\) for \(0\).
		
		\item The   local error bound holds at \(\bar v\) for the primal problem \eqref{eq:P}, and the   local error bound holds at \(\bar x\) for the dual problem
		\eqref{eq:Dmin}.
	\end{enumerate}
\end{corollary}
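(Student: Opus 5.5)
The plan is to assemble the equivalence from the one-sided structural corollaries already established, rather than to re-verify the abstract Assumptions~\ref{ass:chainrule}--\ref{ass:dual-localization}. The one gap is that Corollary~\ref{cor:primal-eb-kkt-msr-equivalence} also asks for local Lipschitz continuity of \(\nabla f_1\), which is not among the present hypotheses. I will therefore route the reverse implication through the dual side, where strong convexity alone is enough.

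For (1)\(\Rightarrow\)(2), assume that \(\mathcal M\) is metrically subregular at \((\bar v,\bar x)\) for \(0\).
\begin{itemize}
\item Since \(A\) has full row rank, Corollary~\ref{cor:full-row-rank-kkt-msr-implies-primal-eb} gives the primal local error bound at \(\bar v\). This corollary needs only full row rank, not Lipschitz continuity of \(\nabla f_1\).
\item Since \(f_1\) is \(\sigma\)-strongly convex, Corollary~\ref{cor:strong-convexity-kkt-msr-implies-dual-eb} gives the dual local error bound at \(\bar x\).
\end{itemize}
Together these give statement (2).

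For (2)\(\Rightarrow\)(1), only the dual half of (2) is needed. The dual local error bound holds at \(\bar x\) and \(f_1\) is \(\sigma\)-strongly convex, so Corollary~\ref{cor:dual-eb-structural-kkt-msr} (equivalently Corollary~\ref{cor:dual-eb-kkt-msr-equivalence}) yields metric subregularity of \(\mathcal M\) at \((\bar v,\bar x)\) for \(0\).

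As an alternative, symmetric route, one can apply Theorem~\ref{thm:primal-dual-eb-implies-kkt-msr} with both error bounds. The reverse residual comparisons \eqref{eq:reverse-residual-comparisons} hold globally by Proposition~\ref{prop:structural-residual-comparisons}(i)--(ii), with
\[
c_P=\sqrt{1+4\|A^\dagger\|^2},\qquad c_D=\sqrt{1+\|A\|^2/\sigma^2}.
\]
This route also avoids any Lipschitz assumption, and it gives the explicit constant \(\kappa_{\rm KKT}=\sqrt{\kappa_P^2c_P^2+\kappa_D^2c_D^2}\). I would mention this version to display the symmetric constant.

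The main obstacle is the temptation to invoke Corollary~\ref{cor:primal-eb-kkt-msr-equivalence}, which would bring in a Lipschitz hypothesis on \(\nabla f_1\) that is not assumed here. Using Corollary~\ref{cor:full-row-rank-kkt-msr-implies-primal-eb} in the forward direction, and either the dual corollary or Theorem~\ref{thm:primal-dual-eb-implies-kkt-msr} in the reverse direction, avoids this issue. No further computation is needed.
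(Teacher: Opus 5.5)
Your proposal is correct and matches the paper's proof. The forward direction combines Corollary~\ref{cor:full-row-rank-kkt-msr-implies-primal-eb} with Corollary~\ref{cor:strong-convexity-kkt-msr-implies-dual-eb}, and the converse uses only the dual error bound through Corollary~\ref{cor:dual-eb-structural-kkt-msr}, avoiding the Lipschitz hypothesis on \(\nabla f_1\) as you noted. Your alternative via Theorem~\ref{thm:primal-dual-eb-implies-kkt-msr} and the global comparisons of Proposition~\ref{prop:structural-residual-comparisons} is also valid, as the paper itself remarks right after that proposition.
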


\begin{proof}
	If \(\mathcal M\) is metrically subregular, then the primal local error bound follows from Corollary~\ref{cor:full-row-rank-kkt-msr-implies-primal-eb},	while the dual local error bound follows from	Corollary~\ref{cor:strong-convexity-kkt-msr-implies-dual-eb}.
	
	Conversely, the dual local error bound together with the strong	convexity of \(f_1\) implies metric subregularity of \(\mathcal M\)	by Corollary~\ref{cor:dual-eb-structural-kkt-msr}.
\end{proof}

\section{Applications}\label{sec:applications}
The purpose of this section is to illustrate how the preceding framework applies across several representative model classes. The applications play two complementary roles. For models for which primal error bounds are already available in the literature, our framework provides a common KKT-based route to recover these bounds and, at the same time, yields corresponding dual error bounds and KKT metric subregularity results. For other models or formulations not explicitly covered by the model-specific error-bound results discussed below, the same framework yields the relevant error bounds once the required structural conditions are verified.

For each model class, we first establish a general error-bound result.  We then apply this result to several representative examples. Any additional assumptions needed for the Luo--Tseng error bounds are stated explicitly.

\subsection{Convex quadratic models with polyhedral regularizers}
\label{subsec:quadratic-polyhedral-regularizers}

We first apply the preceding primal-dual framework to convex quadratic optimization with a polyhedral regularizer. Consider

\begin{equation}\label{eq:quadratic-polyhedral-primal}
    \min_{v\in\mathbb R^n}
    \left\{
        F_Q(v):=q(v)+\psi(v)
    \right\},
    \qquad
    q(v)
    :=
    \frac12\langle v,Qv\rangle
    +
    \langle c,v\rangle
    +
    \gamma,
    \tag{QP}
\end{equation}
where \(Q\in\mathbb R^{n\times n}\) is symmetric and positive semidefinite, \(\gamma\in\mathbb R\), \(c\in\mathbb R^n\), and \(\psi:\mathbb R^n\to(-\infty,+\infty]\) is a proper, closed, polyhedral convex function. Recall that a convex function is polyhedral if its epigraph is a polyhedral set. Problem~\eqref{eq:quadratic-polyhedral-primal} is a special case of \eqref{eq:P}. 
In the natural composite representation, \(A=I\); hence \(A\) has full row rank. Moreover, \(\nabla q(v)=Qv+c\) is globally Lipschitz continuous with Lipschitz constant \(\|Q\|\).

The Fenchel dual problem is
\begin{equation}
\label{eq:quadratic-polyhedral-dual}
    \min_{x\in\mathbb R^n}
    d_Q(x)
    :=
    \psi^*(x)+q^*(-x).
    \tag{QD}
\end{equation}
Since
\[
    q^*(y) =
    \frac12
    \left\langle
        y-c,Q^\dagger(y-c)
    \right\rangle
    - \gamma +
    \delta_{\operatorname{range}(Q)}(y-c),
\]
the dual objective can equivalently be written as
\begin{equation}
\label{eq:quadratic-polyhedral-dual-explicit}
    d_Q(x)
    =
    \psi^*(x)
    +
    \frac12
    \left\langle
        x+c,Q^\dagger(x+c)
    \right\rangle
    -
    \gamma
    +
    \delta_{\operatorname{range}(Q)}(x+c).
\end{equation}

Let $ \mathcal V_Q:=\arg\min F_Q, \  \Lambda_Q:=\arg\min d_Q.$

The associated reduced KKT mapping is
\begin{equation}
\label{eq:quadratic-polyhedral-kkt-mapping}
    \mathcal M_Q(v,x)
    :=
    \begin{pmatrix}
        Qv+c+x\\[1mm]
        v-\partial\psi^*(x)
    \end{pmatrix}.
\end{equation}

\begin{lemma}
\label{lem:quadratic-polyhedral-kkt}
The mapping \(\mathcal M_Q\) in \eqref{eq:quadratic-polyhedral-kkt-mapping} is a piecewise polyhedral multifunction. Consequently, it is metrically subregular at every \((\bar v,\bar x)\in \mathcal M_Q^{-1}(0)\) for \(0\).
\end{lemma}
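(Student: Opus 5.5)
The plan is to show that the graph of \(\mathcal M_Q\) is a finite union of polyhedral convex sets, and then invoke Robinson's theorem on upper Lipschitz continuity of polyhedral multifunctions~\cite{Robinson2009}. Metric subregularity at a point of \(\mathcal M_Q^{-1}(0)\) for \(0\) is equivalent to calmness of the inverse \(\mathcal M_Q^{-1}\) at \(0\) for \((\bar v,\bar x)\). Robinson's result states that a polyhedral multifunction, one whose graph is a finite union of polyhedra, is locally upper Lipschitz at every point. In particular there exist \(\kappa,\epsilon>0\) with
\[
\mathcal M_Q^{-1}(y)\subseteq \mathcal M_Q^{-1}(0)+\kappa\|y\|\,\mathbb B
\quad\text{for all } \|y\|\le\epsilon .
\]
This is the inverse's calmness. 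To pass to subregularity I will use the standard argument. Take \((v,x)\) near \((\bar v,\bar x)\) and pick \(y\in\mathcal M_Q(v,x)\) with \(\|y\|=\operatorname{dist}(0,\mathcal M_Q(v,x))\); this distance is attained because the values of \(\mathcal M_Q\) are closed. If \(\|y\|\le\epsilon\), the inclusion gives \(\operatorname{dist}((v,x),\mathcal S_Q)\le\kappa\|y\|\). If \(\|y\|>\epsilon\), then \(\operatorname{dist}((v,x),\mathcal S_Q)\le\|(v,x)-(\bar v,\bar x)\|\le\epsilon'\le(\epsilon'/\epsilon)\|y\|\), where \(\epsilon'\) is the neighborhood radius. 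Taking the maximum of the two constants gives the estimate.

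The main step is the polyhedrality of the graph. Since \(\psi\) is proper, closed and polyhedral convex, \(\psi^*\) is also proper and polyhedral convex (Rockafellar, Theorem~19.2). The graph of the subdifferential of a polyhedral convex function is a finite union of polyhedra. To see this, write \(\psi^*(x)=\max_i\{\langle a_i,x\rangle-\beta_i\}+\delta_P(x)\) with \(P\) a polyhedron, possibly with \(\psi^*\) equal to an indicator-type term if \(\operatorname{dom}\psi^*=P\). Then for each active index set \(I\) and each face \(\Phi\) of \(P\), the set
\[
\Bigl\{(x,u): x\in\Phi,\ \langle a_i,x\rangle-\beta_i=\langle a_j,x\rangle-\beta_j\ge\langle a_k,x\rangle-\beta_k\ (i,j\in I,\ k),\ u\in\operatorname{conv}\{a_i\}_{i\in I}+N_\Phi\Bigr\}
\]
is polyhedral, and the graph is their finite union. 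Alternatively, I can cite \cite[Proposition~12.30 / Example~9.57]{RockafellarWets1998} (the \(\partial\) of a piecewise linear-quadratic function is piecewise polyhedral). I expect this bookkeeping to be the only real obstacle, and I would rely on the citation rather than redo it.

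With this in hand, the graph of \(\mathcal M_Q\) is the set
\[
\{((v,x),(y_1,y_2)):\ y_1=Qv+c+x,\ (x,\,v-y_2)\in\operatorname{gph}\partial\psi^*\}.
\]
This is the preimage of \(\operatorname{gph}\partial\psi^*\times\{0\}\) under an affine map, together with linear equalities. Preimages of polyhedra under affine maps are polyhedra, and preimages commute with finite unions. Hence \(\operatorname{gph}\mathcal M_Q\) is a finite union of polyhedra, so \(\mathcal M_Q\) is piecewise polyhedral. By the same reasoning \(\mathcal M_Q^{-1}\) is also piecewise polyhedral, since its graph is the same set with coordinates swapped. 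Robinson's theorem applies to \(\mathcal M_Q^{-1}\) at \(0\), and the argument above completes the proof at every \((\bar v,\bar x)\in\mathcal M_Q^{-1}(0)\).
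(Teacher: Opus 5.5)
Your proposal is correct and follows essentially the same route as the paper: use that \(\psi^*\) is polyhedral to get \(\operatorname{gph}\partial\psi^*\) as a finite union of polyhedra, write \(\operatorname{gph}\mathcal M_Q\) as an affine preimage of it, and invoke Robinson's upper-Lipschitz result for polyhedral multifunctions. Your extra step, which converts the upper-Lipschitz property of \(\mathcal M_Q^{-1}\) at \(0\) into the subregularity estimate, just makes explicit what the paper takes directly from Robinson's Proposition~1 and Corollary; two small points are that the case \(\mathcal M_Q(v,x)=\emptyset\) is trivial, and that \(N_\Phi\) in your face decomposition should be the normal cone to \(P\) at relative-interior points of \(\Phi\).
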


\begin{proof}
Since \(\psi\) is polyhedral convex, its Fenchel conjugate \(\psi^*\) is also polyhedral convex; see \cite[Theorem~19.2]{Rockafellar1970}. Moreover, \(\partial\psi^*\) is a piecewise polyhedral multifunction.

The graph of \(\mathcal M_Q\) admits the representation
\[
\begin{aligned}
    \operatorname{gph}\mathcal M_Q
    =
    \bigl\{
        ((v,x),(a,e)):
        \ &a=Qv+c+x,\ (x,v-e)\in
        \operatorname{gph}\partial\psi^*
    \bigr\}.
\end{aligned}
\]
It is therefore the inverse image of a finite union of polyhedral sets under an affine mapping. Hence \(\mathcal M_Q\) is piecewise polyhedral.

By Robinson~\cite[Proposition~1 and Corollary]{Robinson2009}, every piecewise polyhedral multifunction is metrically subregular at each point of its graph. Thus, for every \((\bar v,\bar x)\in\mathcal M_Q^{-1}(0)\), there exist constants
\(\kappa_Q,\rho_Q>0\) such that
\[
    \operatorname{dist}
    \bigl((v,x),\mathcal M_Q^{-1}(0)\bigr)
    \leq
    \kappa_Q
    \operatorname{dist}
    \bigl(0,\mathcal M_Q(v,x)\bigr)
\]
whenever $(v,x)\in \mathbb B\bigl((\bar v,\bar x),\rho_Q\bigr).$
\end{proof}

Since \(q\) is finite-valued and continuous on \(\mathbb R^n\), \(F_Q=q+\psi\) is proper, closed, and convex. Moreover, if problem~\eqref{eq:quadratic-polyhedral-primal} has an optimal solution, then the convex sum rule and the Fenchel--Young equality imply that the dual problem~\eqref{eq:quadratic-polyhedral-dual} has an optimal solution and that strong duality holds. Hence Assumption~\ref{ass:basic-primal-dual-setting} is satisfied.
\begin{proposition}[Error bounds for quadratic-polyhedral models]
	\label{prop:quadratic-polyhedral-error-bounds}
	Suppose that problem~\eqref{eq:quadratic-polyhedral-primal} has an optimal solution. Then:
	
	\begin{enumerate}[label=\textnormal{(\roman*)},leftmargin=*]
		\item The primal   local error bound holds at every
		\(\bar v\in\mathcal V_Q\).
		
		\item The dual   local error bound holds at every
		\(\bar x\in\Lambda_Q\).
		
		\item If \(F_Q\) is level-bounded, then the primal Luo--Tseng
		error bound holds.
		
		\item The dual Luo--Tseng error bound holds.
		
		\item If \(Q\succ0\), then \(F_Q\) is level-bounded and hence
		all four error-bound properties above hold.
	\end{enumerate}
\end{proposition}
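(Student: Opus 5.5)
The plan is to combine Lemma~\ref{lem:quadratic-polyhedral-kkt} with the structural equivalences of Section~\ref{sec:primal-dual-eb-relation}, and then use Proposition~\ref{prop:local-sublevel-relations} for the Luo--Tseng statements.

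\emph{Parts (i) and (ii).} In the representation \(f_1=q\), \(f_2=\psi\), \(A=I\), \(b=0\), the matrix \(A\) has full row rank and \(\nabla q\) is globally Lipschitz. Assumption~\ref{ass:basic-primal-dual-setting} holds, as noted just before the statement, so Proposition~\ref{prop:kkt-solution-product} gives \(\mathcal M_Q^{-1}(0)=\mathcal V_Q\times\Lambda_Q\).

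For (i), fix \(\bar v\in\mathcal V_Q\) and choose any \(\bar x\in\Lambda_Q\), so that \((\bar v,\bar x)\in\mathcal S\). By Lemma~\ref{lem:quadratic-polyhedral-kkt}, \(\mathcal M_Q\) is metrically subregular at \((\bar v,\bar x)\). Corollary~\ref{cor:full-row-rank-kkt-msr-implies-primal-eb} then yields the primal local error bound at \(\bar v\).

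For (ii), fix \(\bar x\in\Lambda_Q\) and pair it with any \(\bar v\in\mathcal V_Q\). Since \(q\) need not be strongly convex, Corollary~\ref{cor:strong-convexity-kkt-msr-implies-dual-eb} is not available. Instead I would apply Theorem~\ref{thm:kkt-msr-implies-dual-eb} directly, which requires Assumptions~\ref{ass:dual-subdifferential} and~\ref{ass:dual-localization}.

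\emph{Assumption~\ref{ass:dual-subdifferential}.} The functions \(\psi^*\) and \(q^*(-\cdot)\) are both polyhedral in the relevant sense. Indeed, \(q^*\) is a quadratic plus the indicator of an affine subspace. Hence the polyhedral sum rule (Rockafellar, Theorem~23.8) applies without any relative-interior condition, because \(\operatorname{dom}d_Q\neq\emptyset\). This gives \(\partial d_Q(x)=\partial\psi^*(x)-\partial q^*(-x)\) with \(\partial q^*(y)=\{v: Qv+c=y\}\). This is exactly the required formula.

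\emph{Lemma~\ref{lem:dual-residual-controls-kkt} and the localization.} For \(x\in\operatorname{dom}\partial d_Q\), take the minimal-norm element of \(\partial d_Q(x)\), written as \(u-v\) with \(u\in\partial\psi^*(x)\) and \(Qv+c=-x\). Setting \(v_D(x)=v\) makes \(r_{\rm KKT}(v_D(x),x)\le r_D(x)\).

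This is the main obstacle: choosing \(v\) continuously with \(v_D(\bar x)=\bar v\). My first attempt would be
\[
v_D(x):=\operatorname{proj}_{\{v:\,Qv+c=-x,\ v\in \partial\psi^*(x)-w(x)\}}(\bar v),
\]
where \(w(x)\) is the minimal-norm subgradient. The hope is that Hoffman-type Lipschitz stability of polyhedral feasible sets gives \(\|v_D(x)-\bar v\|\le C(\|x-\bar x\|+\|w(x)\|)\).

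If that fails, I would avoid Assumption~\ref{ass:dual-localization} altogether. The argument would run through Theorem~\ref{thm:primal-dual-eb-implies-kkt-msr} in reverse: the graph of \(\partial d_Q\) is itself a finite union of polyhedra, being the image of a polyhedral graph under a linear map. Robinson's result~\cite{Robinson2009} then gives metric subregularity of \(\partial d_Q\) at \((\bar x,0)\) directly. That is precisely the dual local error bound, since \((\partial d_Q)^{-1}(0)=\Lambda_Q\). I would actually adopt this fallback as the main route, since it is cleaner. The same argument also gives (i) a second way.

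\emph{Parts (iii)--(v).} Part (iii) follows from (i) and Proposition~\ref{prop:local-sublevel-relations}(iii). For (iv), Corollary~\ref{cor:level-boundedness-sufficient-conditions}(ii) with \(A=I\) shows that \(d_Q\) is level-bounded, so (ii) and Proposition~\ref{prop:local-sublevel-relations}(iv) apply. For (v), \(Q\succ0\) makes \(q\) strongly convex, and Corollary~\ref{cor:level-boundedness-sufficient-conditions}(i) gives level-boundedness of \(F_Q\). Then (iii) applies, and together with (i), (ii) and (iv) all four properties hold.
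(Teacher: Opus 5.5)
Your proof is essentially the paper's: (i) via Lemma~\ref{lem:quadratic-polyhedral-kkt} and Corollary~\ref{cor:full-row-rank-kkt-msr-implies-primal-eb}, (ii) via piecewise polyhedrality of $\partial d_Q$ and Robinson's theorem \cite{Robinson2009}, and (iii)--(v) via Proposition~\ref{prop:local-sublevel-relations} and Corollary~\ref{cor:level-boundedness-sufficient-conditions}. The only real difference is in (ii): you derive piecewise polyhedrality of $\operatorname{gph}\partial d_Q$ from the sum rule and a linear image of $\operatorname{gph}\partial\psi^*$, whereas the paper notes that $d_Q$ is piecewise linear-quadratic and invokes \cite[Proposition~12.30]{RockafellarWets1998}; note that your sum rule holds because $\operatorname{dom}q^*(-\cdot)$ is an affine set, not because $q^*(-\cdot)$ is polyhedral (it is not), and your aside that the same argument gives (i) would need an extra step, since metric subregularity of $\partial F_Q$ bounds $\operatorname{dist}(v,\mathcal V_Q)$ by $\operatorname{dist}(0,\partial F_Q(v))$ rather than by $\|r_P(v)\|$.
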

\begin{proof}
	See Appendix~\ref{app:prop:quadratic-polyhedral-error-bounds}.
\end{proof}

\begin{remark}[Relation with existing primal error-bound results]
\label{rem:quadratic-polyhedral-literature}
Quadratic objectives with polyhedral regularizers form a standard class in the primal error-bound literature. Tseng and
Yun~\cite{TsengYun2009} establish their proximal-residual error bound when the smooth term is quadratic and the regularizer is polyhedral. Zhou and So~\cite{zhouso2017} obtain related primal error bounds for structured convex problems with polyhedral regularizers through metric subregularity of the regularizer subdifferential and a Hoffman-type bounded-linear-regularity argument. These general results apply to several of the models below after identifying the corresponding polyhedral regularizer; they do not necessarily discuss each named model separately.

Specialized error-bound results for support vector machines are studied in \cite{DeLeoneLazzari2010}. The approach used here differs in that the complete reduced KKT mapping is incorporated into a common primal--dual framework, yielding the primal and dual local error bounds together with the Luo--Tseng conclusions in
Proposition~\ref{prop:quadratic-polyhedral-error-bounds}.
\end{remark}

\begin{example}[Lasso-type models and SLOPE] 	\label{ex:lasso-type-models}
	
	Consider the regularized least-squares problem
	\begin{equation}
		\label{eq:lasso-type-models}
		\min_{\beta\in\mathbb R^p}
		\left\{
		\frac12\|y-X\beta\|^2
		+
		\lambda\phi(\beta)
		\right\},
		\qquad
		\lambda>0,
	\end{equation}
	where \(X\in\mathbb R^{m\times p}\), \(y\in\mathbb R^m\), and 	\(\phi:\mathbb R^p\to\mathbb R_+\) is a convex regularizer. 	Several standard models are obtained from different choices of
	\(\phi\):
	\[
	\phi(\beta)
	=
	\begin{cases}
		\|\beta\|_1,
		& \text{Lasso~\cite{Osborne2000}},\\[1mm]
		\displaystyle
		\sum_{j=1}^p w_j|\beta_j|,
		\quad w_j>0,
		& \text{Weighted/Adaptive Lasso~\cite{Zou2006}},\\[3mm]
		\|D\beta\|_1,
		& \text{Generalized Lasso~\cite{Tibshirani2011}},\\[2mm]
		\displaystyle
		J_w(\beta)
		:=
		\sum_{i=1}^p
		w_i|\beta|_{(i)},\ \ 	w_1\geq\cdots\geq w_p\geq0, 	\  \ 
		w_1>0,
		& \text{SLOPE~\cite{Bogdan2015}},
	\end{cases}
	\]
	where \(D\in\mathbb R^{r\times p}\) is a prescribed penalty matrix, 	and, for SLOPE, \( 	|\beta|_{(1)} 	\geq 	\cdots 	\geq
	|\beta|_{(p)}\) denoting the ordered absolute values of the components of \(\beta\). In the adaptive Lasso, the weights \(w_j\) are chosen adaptively from a pilot estimator. 
	
	All these models are treated here from the primal side using the
	identity composite representation
	\[
	f_1(\beta)
	=
	\frac12\|y-X\beta\|^2,
	\qquad
	f_2(\beta)
	=
	\lambda\phi(\beta),
	\qquad
	A=I,
	\qquad
	b=0.
	\]
	The function \(f_1\) is finite-valued and convex quadratic, with
	\[
	\nabla f_1(\beta)
	=
	X^\top(X\beta-y),
	\]
	and hence has a globally Lipschitz continuous gradient. For each of
	the choices above, \(f_2\) is a proper, closed, polyhedral convex
	function. Therefore all these models belong to the
	quadratic-polyhedral class of
	Proposition~\ref{prop:quadratic-polyhedral-error-bounds}.
	
	For the Lasso, weighted Lasso, adaptive Lasso, and SLOPE, the
	regularizer \(\phi\) is coercive. Hence the primal objective is
	level-bounded and attains its minimum. Consequently,
	Proposition~\ref{prop:quadratic-polyhedral-error-bounds} yields both
	the primal and dual local error bounds and both Luo--Tseng error
	bounds. No full-column-rank assumption on \(X\) is required.
	
	For the generalized Lasso, \(\phi(\beta)=\|D\beta\|_1\) need not be
	coercive. Nevertheless, the primal problem admits an optimal
	solution. Indeed, its objective depends on \(\beta\) only through
	\((X\beta,D\beta)\), and the function
	\[
	(u,v)
	\mapsto
	\frac12\|y-u\|^2+\lambda\|v\|_1
	\]
	is coercive on the finite-dimensional subspace
	\[
	\{(X\beta,D\beta):\beta\in\mathbb R^p\}.
	\]
	Thus Proposition~\ref{prop:quadratic-polyhedral-error-bounds} gives
	the primal and dual local error bounds and the dual Luo--Tseng error
	bound. Moreover, if
	\[
	\ker X\cap\ker D=\{0\},
	\]
	then the primal objective is level-bounded, and hence the primal
	Luo--Tseng error bound also holds.
\end{example}

\begin{example}[Support vector machine models]
	\label{ex:svm-models}
	
	Support vector machines provide a broad family of classification
	and regression models whose optimization structure depends on the
	choice of loss function. We consider first support vector
	classification (SVC). Given training data
	\[
	(z_i,y_i)\in\mathbb R^d\times\{-1,1\},
	\qquad i=1,\ldots,m,
	\]
	a broad class of soft-margin SVC models can be written as
	\begin{equation}
		\label{eq:general-svc}
		\min_{\omega\in\mathbb R^d,\;\beta\in\mathbb R}
		\left\{
		\frac12\|\omega\|^2
		+
		C\sum_{i=1}^m
		\ell\!\left(
		1-y_i(\omega^\top z_i+\beta)
		\right)
		\right\},
		\qquad C>0.
	\end{equation}
	Several convex loss functions in \eqref{eq:general-svc} yield	problems of the form \eqref{eq:quadratic-polyhedral-primal},	either directly or after an equivalent quadratic-programming	reformulation.
	
\textnormal{(a) \(L_1\)-loss.}
The standard \(L_1\)-loss SVC \cite{CortesVapnik1995,ChangHsiehLin2008}  uses
\[
\ell_{L_1}(t)
=
(t)_+
=
\max\{t,0\}.
\]

\textnormal{(b) \(L_2\)-loss.}
The standard \(L_2\)-loss SVC \cite{ChangHsiehLin2008} uses
\[
\ell_{L_2}(t)
=
(t)_+^2
=
\bigl(\max\{t,0\}\bigr)^2.
\]

\textnormal{(c) Pinball loss.}
	A pinball-loss SVC considered in 	\cite{JumutcHuangSuykens2013,HuangShiSuykens2014} uses
	\[
	\ell_{\rm p}^{\tau}(t)
	=
	\begin{cases}
		t, & t>0,\\
		-\tau t, & t\leq0,
	\end{cases}
	\qquad
	\tau\in(0,1].
	\]
	
	\textnormal{(d) \(\epsilon\)-insensitive pinball loss.}
	Huang, Shi, and Suykens~\cite{HuangShiSuykens2014} proposed
	\[
	\ell_{\rm ip}^{\tau,\epsilon}(t)
	=
	\begin{cases}
		t-\epsilon,
		& t>\epsilon,\\
		0,
		& -\epsilon/\tau\leq t\leq\epsilon,\\
		-\tau
		\left(t+\dfrac{\epsilon}{\tau}\right),
		& t<-\epsilon/\tau,
	\end{cases}
	\]
	where \(\tau\in(0,1]\) and \(\epsilon>0\).
	
	\textnormal{(e) Least-squares loss.}
	The least-squares SVC of Suykens and Vandewalle~\cite{SuykensVandewalle1999} uses
	\[
	\ell_{\rm ls}(t)=t^2.
	\]
	
%
	
	\textnormal{(f) Huber-pinball loss.}
	Zhu, Song, and Xiao~\cite{ZhuSongXiao2020} proposed
	\[
	\ell_{\rm hp}^{\tau,\delta}(t)
	=
	\begin{cases}
		t-\dfrac{\delta}{2}, & t>\delta,\\[1mm]
		\dfrac{t^2}{2\delta}, & 0<t\leq\delta,\\[1mm]
		\dfrac{\tau t^2}{2\delta}, & -\delta\leq t\leq0,\\[1mm]
		-\tau
		\left(t+\dfrac{\delta}{2}\right), & t<-\delta,
	\end{cases}
	\]
	where \(\tau\in(0,1]\) and \(\delta>0\).

	For support vector regression (SVR), given
	\[
	(z_i,y_i)\in\mathbb R^d\times\mathbb R,
	\qquad i=1,\ldots,m,
	\]
	consider
	\begin{equation}
		\label{eq:general-svr}
		\min_{\omega\in\mathbb R^d,\;\beta\in\mathbb R}
		\left\{
		\frac12\|\omega\|^2
		+
		C\sum_{i=1}^m
		L\!\left(
		y_i-\omega^\top z_i-\beta
		\right)
		\right\},
		\qquad C>0.
	\end{equation}
	
	\textnormal{(g) \(\epsilon\)-insensitive loss.}
	The classical \(\epsilon\)-SVR of	Drucker et al.~\cite{DruckerBurgesKaufmanSmolaVapnik1997}	uses
	\[
	L_\epsilon(t)
	=
	\max\{|t|-\epsilon,0\},
	\qquad
	\epsilon>0.
	\]
	
		\textnormal{(h) \(\epsilon\)-penalty loss.}
	Anand, Rastogi, and Chandra~\cite{AnandRastogiChandra2020}
	introduced
	\[
	L_{\tau_1,\tau_2,\epsilon}^{P}(t)
	=
	\max
	\left\{
	\tau_2|t|
	+
	(\tau_1-\tau_2)\epsilon,\,
	\tau_1|t|
	\right\},
	\]
	where $	\tau_2>0,	\ 	\tau_2\geq\tau_1\geq0,	\ 	\epsilon>0.$
	
	\textnormal{(i) Least-squares loss.}
	The least-squares SVR associated with	Suykens and Vandewalle~\cite{SuykensVandewalle1999} uses
	\[
	L_{\rm ls}(t)=t^2.
	\]
	
	\textnormal{(j) Huber loss.}
	The Huber-loss SVR discussed by	Gunn~\cite{Gunn1998} uses
	\[
	H_c(t)
	=
	\begin{cases}
		\dfrac12t^2, & |t|\leq c,\\[1mm]
		c\left(|t|-\dfrac c2\right), & |t|>c,
	\end{cases}
	\qquad c>0.
	\]
	
In addition to the loss-based formulations above, we also consider
the bias-regularized hinge-loss SVC
\begin{equation}
	\label{eq:svm-example}
	\min_{\omega\in\mathbb R^d,\;\beta\in\mathbb R}
	\left\{
	\frac12\|\omega\|^2
	+
	\frac12\beta^2
	+
	C\sum_{i=1}^m
	\max
	\left\{
	1-y_i(\omega^\top z_i+\beta),0
	\right\}
	\right\},
	\qquad C>0,
\end{equation}
where \((z_i,y_i)\in\mathbb R^d\times\{-1,1\},
\ \  i=1,\ldots,m.\)

Bias-regularized SVM formulations are discussed by Piccialli and Sciandrone~\cite{Piccialli2022}. The hinge-loss term is polyhedral, while the quadratic term in \((\omega,\beta)\) is strongly convex. Hence this model also belongs to the quadratic-polyhedral class.

\begin{proposition}[Error bounds for the SVC and SVR models]
	\label{prop:svm-error-bounds}
	
	Assume that the training set is nonempty. Then:
	
	\begin{enumerate}[label=\textnormal{(\roman*)},leftmargin=*]
		
		\item
		The models in \textnormal{(a)}, \textnormal{(c)}--\textnormal{(e)},
		and \textnormal{(g)}--\textnormal{(i)}, together with
		\eqref{eq:svm-example}, are primal instances of
		\eqref{eq:quadratic-polyhedral-primal}.
		The models in \textnormal{(b)}, \textnormal{(f)}, and
		\textnormal{(j)} admit equivalent lifted primal formulations of the
		same form.
		
		\item
		The primal and dual local error bounds and the dual Luo--Tseng error
		bound hold for all the above formulations. The primal Luo--Tseng error
		bound also holds whenever the corresponding primal objective is
		level-bounded. In particular, this is the case for all the models
		above except that no such conclusion is asserted for the
		\(L_1\)- and \(L_2\)-loss SVC models without an additional
		level-boundedness assumption.
		
	\end{enumerate}
\end{proposition}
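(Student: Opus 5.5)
The plan is to reduce every formulation to an instance of \eqref{eq:quadratic-polyhedral-primal} and then invoke Proposition~\ref{prop:quadratic-polyhedral-error-bounds}. For part~(ii) two further facts have to be checked: that an optimal solution exists, and, where relevant, that the primal objective is level-bounded. Throughout, I take the variable to be \(v=(\omega,\beta)\), or \(v=(\omega,\beta,\xi)\) with auxiliary variables \(\xi\) for the lifted models.

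\textbf{Direct instances.} The quadratic part is \(q(v)=\frac12\|\omega\|^2\), so \(Q=\operatorname{diag}(I_d,0)\succeq0\). For \eqref{eq:svm-example} one has \(Q=I\).
\begin{enumerate}[label=\textnormal{(\arabic*)},leftmargin=*]
\item For \textnormal{(a)}, \textnormal{(c)}, \textnormal{(d)}, \textnormal{(g)}, \textnormal{(h)} and \eqref{eq:svm-example}, each loss is a finite maximum of affine functions of \(t\). Its composition with the affine maps \(v\mapsto 1-y_i(\omega^\top z_i+\beta)\) or \(v\mapsto y_i-\omega^\top z_i-\beta\) is therefore finite-valued polyhedral convex. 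A nonnegative sum of such functions is again polyhedral, so I set \(\psi:=C\sum_i\ell(\cdot)\).
\item For the least-squares losses \textnormal{(e)} and \textnormal{(i)}, the loss term is a convex quadratic in \(v\). I absorb it into \(q\), which enlarges \(Q\) by a positive semidefinite term, and take \(\psi\equiv0\).
\end{enumerate}

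\textbf{Lifted instances.} For these I use infimal-convolution representations.
\begin{enumerate}[label=\textnormal{(\arabic*)},leftmargin=*]
\item For \textnormal{(b)}, use \((t)_+^2=\min\{\xi^2:\xi\ge t\}\). This gives \(q=\frac12\|\omega\|^2+C\|\xi\|^2\) and \(\psi=\delta_P\), where \(P:=\{(\omega,\beta,\xi):\xi_i\ge1-y_i(\omega^\top z_i+\beta)\}\) is a polyhedron.
\item For \textnormal{(j)}, use \(H_c(t)=\min\{\frac12 p^2+c|s|:p+s=t\}\). The quadratic \(\frac12p^2\) goes into \(q\); the term \(c|s|\) and the linear coupling go into \(\psi\) as a polyhedral function plus the indicator of an affine set.
\item For \textnormal{(f)}, write \(\ell_{\rm hp}^{\tau,\delta}(t)=\min\{\frac{1}{2\delta}(p^2+\tau r^2)+s_+ +\tau s_-:\ t=p-r+s,\ p,r\ge0\}\). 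I will verify this by a slope comparison: the minimizer saturates \(p\) at \(\delta\) when \(t>\delta\) and \(r\) at \(\delta\) when \(t<-\delta\). The constraints are polyhedral and the objective is quadratic plus polyhedral.
\end{enumerate}
In each lifted case, minimizing out the auxiliary variables recovers the original objective. Hence the conclusions of Proposition~\ref{prop:quadratic-polyhedral-error-bounds} hold for the lifted formulation, and this is how statement~(ii) is to be read for these models. This settles~(i).

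\textbf{Existence of a minimizer.} Each objective is a convex quadratic plus a polyhedral function bounded below by \(0\). Passing to the epigraph of \(\psi\) turns it into a convex quadratic program over a polyhedron whose objective is bounded below. By the Frank--Wolfe theorem, such a program attains its minimum, so \(\mathcal V_Q\neq\emptyset\). Proposition~\ref{prop:quadratic-polyhedral-error-bounds}(i), (ii) and (iv) then give the primal and dual local error bounds and the dual Luo--Tseng bound for all formulations.

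\textbf{Level-boundedness.} For the primal Luo--Tseng bound I check level-boundedness and apply Proposition~\ref{prop:quadratic-polyhedral-error-bounds}(iii).
\begin{enumerate}[label=\textnormal{(\arabic*)},leftmargin=*]
\item The term \(\frac12\|\omega\|^2\) bounds \(\omega\) on any sublevel set.
\item For \eqref{eq:svm-example}, \(\frac12\beta^2\) bounds \(\beta\).
\item For the pinball-type losses (with \(\tau>0\)), the least-squares losses, and the SVR losses, the loss grows at least linearly as \(|t|\to\infty\) in both directions. With one training sample and \(\omega\) bounded, this already forces \(|\beta|\) to be bounded.
\item In the lifted models, the auxiliary variables are bounded through their quadratic or absolute-value penalties together with the linking constraints.
\item For the hinge and squared-hinge losses, the loss vanishes as \(y_i\beta\to+\infty\) when all labels coincide. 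So no level-boundedness is claimed for \textnormal{(a)} and \textnormal{(b)}, matching the exception in the statement.
\end{enumerate}

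\textbf{Main obstacle.} I expect the hardest step to be the lifted models, especially the exact infimal-convolution identity for the Huber-pinball loss and the bookkeeping that the lifted objective remains level-bounded in the auxiliary variables. To handle the identity, I would first verify the one-dimensional formula piecewise on the four regions of \(t\). I would then check, in the lifted variables, that bounded objective value together with bounded \(\omega\) and \(\beta\) forces bounded \((p,r,s)\).
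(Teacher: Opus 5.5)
Your proof is correct, and its skeleton matches the paper's. You write each model, directly or after lifting, as an instance of \eqref{eq:quadratic-polyhedral-primal}, verify solvability, invoke Proposition~\ref{prop:quadratic-polyhedral-error-bounds}\,(i),(ii),(iv), and check level-boundedness separately for~(iii).

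The genuinely different step is existence. You obtain a minimizer for every formulation at once from the Frank--Wolfe theorem, applied to $q(v)+\tau$ over the polyhedron $\operatorname{epi}\psi$. The paper instead deduces existence from level-boundedness for most models. For the $L_1$- and $L_2$-loss SVC it argues by cases: with a single class, $\omega=0$ and a suitable $\beta$ give objective value $0$; with both classes, the objective is level-bounded. Your route is uniform and needs no label analysis. The paper's route is more elementary and, as a by-product, shows that the hinge-type SVC objectives are level-bounded whenever both classes occur.

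Your liftings also differ in detail from the paper's, but all are exact:
\begin{itemize}
\item In (b), omitting $\xi\ge0$ is harmless, since $\min\{\xi^2:\xi\ge t\}=(t)_+^2$.
\item In (j), keeping $c|s|$ inside $\psi$ is equivalent to the paper's split $s=s^+-s^-$ with linear cost.
\item In (f), your single three-term representation is valid. Its conjugate is $\frac{\delta}{2}\max\{y,0\}^2+\frac{\delta}{2\tau}\min\{y,0\}^2$ on $[-\tau,1]$ (and $+\infty$ elsewhere), whose conjugate is $\ell_{\rm hp}^{\tau,\delta}$. The paper instead writes $\ell_{\rm hp}^{\tau,\delta}(t)=h_\delta(t)+\tau h_\delta(-t)$ and lifts the two one-sided Huber terms separately.
\end{itemize}

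One point should be stated explicitly in the lifted level-boundedness check: where the bound on $\beta$ comes from, since $\beta$ does not appear in the lifted objective $\widetilde F$. Either use $F(\omega,\beta)\le\widetilde F(\omega,\beta,\cdot)$ together with your linear-growth argument for the original loss. Or use the linking constraints once $\omega$ and the auxiliaries are bounded by their penalties. Your ``main obstacle'' paragraph currently takes $\beta$ bounded as given.
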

\begin{proof}
	See Appendix~\ref{app:proof:prop:svm-error-bounds}.
\end{proof}

%
%

\end{example}

\begin{example}[Polyhedrally constrained convex quadratic programming]
	\label{ex:polyhedral-constrained-qp}
	Consider
	\begin{equation}
		\label{eq:polyhedral-constrained-qp}
		\begin{aligned}
			\min_{v\in\mathbb R^n}\quad&
			\frac12\langle v,Qv\rangle
			+
			\langle c,v\rangle
			+
			\gamma\\
			\text{\rm subject to}\quad&
			Hv+h\in C,
		\end{aligned}
	\end{equation}
	where \(Q\succeq0\), \(H\in\mathbb R^{m\times n}\), \(h\in\mathbb R^m\), and \(C\subseteq\mathbb R^m\) is a nonempty polyhedral convex set.
	
	Define \(\Omega:=\{v\in\mathbb R^n:Hv+h\in C\}.	\)
	We use the primal identity representation
	\[
	f_1(v)=\frac12\langle v,Qv\rangle+\langle c,v\rangle+\gamma,
	\qquad
	f_2(v)=\delta_\Omega(v),
	\qquad
	A=I,\quad b=0.
	\]
	
	Assume that the problem has an optimal solution. Since the feasible set is polyhedral, its indicator is a polyhedral convex function. Hence Proposition~\ref{prop:quadratic-polyhedral-error-bounds} gives the primal and dual   local error bounds and the dual Luo--Tseng error bound.
	
	If the primal objective is level-bounded, the primal Luo--Tseng error bound also holds. In particular, this is the case when \(Q\succ0\).
\end{example}

\subsection{Strongly convex smooth models with convex regularizers} \label{subsec:strongly-convex-convex-regularizers}
The previous subsection exploits polyhedrality of the regularizer and therefore allows the quadratic term to be merely convex. We now consider a complementary structural regime: the smooth term is strongly convex, while the regularizer is allowed to be an arbitrary proper closed convex function and need not be polyhedral. Thus neither of the two classes considered here contains the other. Their intersection consists, in particular, of strongly convex quadratic models with polyhedral regularizers.

Consider the composite problem
\begin{equation}
\label{eq:strongly-convex-general-regularizer}
    \min_{v\in\mathbb R^n}
    \left\{
        \Phi(v):=f(v)+P(v)
    \right\},
\tag{SC}
\end{equation}
where $f:\mathbb R^n\to\mathbb R$ is finite-valued, differentiable, and \(\sigma\)-strongly convex for some \(\sigma>0\), while $P:\mathbb R^n\to(-\infty,+\infty]$ is an arbitrary proper, closed, convex function. We do not assume that \(P\) is polyhedral.

Its Fenchel dual is
\begin{equation}
\label{eq:strongly-convex-general-dual}
    \min_{x\in\mathbb R^n}
    d_P(x)
    :=
    P^*(x)+f^*(-x).
\tag{SC-D}
\end{equation}
The associated reduced KKT mapping is
\begin{equation}
\label{eq:strongly-convex-general-kkt}
    \mathcal M_P(v,x)
    :=
    \begin{pmatrix}
        \nabla f(v)+x\\[1mm]
        v-\partial P^*(x)
    \end{pmatrix}.
\end{equation}

Since \(f\) is strongly convex and \(P\) is convex, the primal objective \(\Phi=f+P\) is strongly convex. Hence it has a unique minimizer, denoted by \(\bar v\). The associated dual solution is also unique and is given by  $\bar x=-\nabla f(\bar v).$

\begin{proposition}[Error bounds under strong convexity]
	\label{prop:strongly-convex-arbitrary-regularizer}
	
	Suppose that Assumption~\ref{ass:basic-primal-dual-setting} holds for
	the primal-dual pair
	\eqref{eq:strongly-convex-general-regularizer}--%
	\eqref{eq:strongly-convex-general-dual}.
	Assume further that \(\nabla f\) is locally Lipschitz continuous
	around the unique primal solution \(\bar v\).
	
	Then the following statements are equivalent and all hold:
	\begin{enumerate}[label=\textnormal{(\roman*)},leftmargin=*]
		
		\item The   local error bound holds at \(\bar v\) for the
		primal problem.
		
		\item The reduced KKT mapping \(\mathcal M_P\) is metrically
		subregular at \((\bar v,\bar x)\) for \(0\).
		
		\item The   local error bound holds at \(\bar x\) for the
		dual problem.
		
	\end{enumerate}
	
	Moreover:
	
	\begin{enumerate}[label=\textnormal{(\roman*)},leftmargin=*,
		resume]
		
		\item The Luo--Tseng error bound holds for the primal problem.
		
		\item The Luo--Tseng error bound holds for the dual problem.
		
	\end{enumerate}
\end{proposition}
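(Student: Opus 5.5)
The plan is to reduce the three equivalences to the structural corollaries of Section~\ref{subsec:kkt-eb-equivalence}, then prove statement (i) directly from strong convexity, and finally transfer to the Luo--Tseng bounds via level-boundedness. Problem \eqref{eq:strongly-convex-general-regularizer} is the instance of \eqref{eq:P} with $f_1=f$, $f_2=P$, $A=I$ and $b=0$. In particular, $A$ has full row rank, $f_1$ is $\sigma$-strongly convex, and $\nabla f_1$ is locally Lipschitz around $\bar v$ by hypothesis.

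\textbf{Step 1 (the equivalences).} Corollary~\ref{cor:primal-eb-kkt-msr-equivalence}, using full row rank and local Lipschitz continuity, gives (i)$\Leftrightarrow$(ii). Corollary~\ref{cor:dual-eb-kkt-msr-equivalence}, using strong convexity, gives (ii)$\Leftrightarrow$(iii). Both apply at the pair $(\bar v,\bar x)\in\mathcal S$, where $\bar x=-\nabla f(\bar v)$.

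\textbf{Step 2 (statement (i) holds).} This is the only genuinely new estimate. For $v$ near $\bar v$, set $p:=\operatorname{prox}_P(v-\nabla f(v))$ and $r:=r_P(v)=p-v$. The prox optimality condition gives $-\nabla f(v)-r\in\partial P(p)$, while optimality of $\bar v$ gives $-\nabla f(\bar v)\in\partial P(\bar v)$. Monotonicity of $\partial P$ then yields
\[
\langle \nabla f(v)-\nabla f(\bar v)+r,\; v+r-\bar v\rangle\le 0 .
\]
Expanding this, and using strong monotonicity of $\nabla f$ together with an $L$-Lipschitz bound on $\nabla f$ on a ball $\mathbb B(\bar v,\rho)$ (only $v$ and $\bar v$ enter the gradient terms, so there is no need to localize $p$), gives, with $t=\|v-\bar v\|$ and $s=\|r\|$,
\[
\sigma t^2\le (L+1)\,t s+s^2 .
\]
Solving this quadratic inequality in $t$ gives $t\le c\,s$ for an explicit $c=c(\sigma,L)$. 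Since $\mathcal V=\{\bar v\}$, this is exactly the primal local error bound. Step~1 then shows that (ii) and (iii) hold as well.

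\textbf{Step 3 (Luo--Tseng bounds).}
\begin{itemize}
\item For (iv): by Corollary~\ref{cor:level-boundedness-sufficient-conditions}(i), $\Phi$ is level-bounded. The local error bound holds at every point of $\mathcal V=\{\bar v\}$, so Proposition~\ref{prop:local-sublevel-relations}(iii) gives the primal Luo--Tseng bound.
\item For (v): since $A=I$ has full row rank, Corollary~\ref{cor:level-boundedness-sufficient-conditions}(ii) shows $d_P$ is level-bounded. The dual solution set is the singleton $\{\bar x\}$, and (iii) gives the local bound there, so Proposition~\ref{prop:local-sublevel-relations}(iv) applies.
\end{itemize}

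The main obstacle is Step~2. One must extract a linear bound from the monotonicity inequality without any a priori control of $\|r\|$ by $\|v-\bar v\|$, which is why I would treat it as a quadratic inequality in $t$. One must also make sure the Lipschitz constant is used only at points of the ball $\mathbb B(\bar v,\rho)$.
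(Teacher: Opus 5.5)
Your proposal is correct. Its overall structure matches the paper's proof: the equivalences come from Corollaries~\ref{cor:primal-eb-kkt-msr-equivalence} and~\ref{cor:dual-eb-kkt-msr-equivalence}, and (iv)--(v) come from level-boundedness together with Proposition~\ref{prop:local-sublevel-relations}(iii)--(iv).

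The one real difference is the estimate establishing (i).

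\textbf{The paper's route.} It forms $g_w:=\nabla f(w)-\nabla f(v)-r_P(v)\in\partial\Phi(w)$ at the prox point $w=T(v)$. It then applies strong monotonicity of $\partial\Phi$ between $w$ and $\bar v$. This needs a Lipschitz bound between $w$ and $v$, so it uses continuity of $T$ to keep $T(v)$ inside the Lipschitz neighborhood. The resulting constant is $\kappa_P=1+(L+1)/\sigma$.

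\textbf{Your route.} You pair $-\nabla f(v)-r\in\partial P(p)$ with $-\nabla f(\bar v)\in\partial P(\bar v)$ and use only plain monotonicity of $\partial P$. The curvature comes from strong monotonicity of $\nabla f$ at $v$ and $\bar v$. Gradients are evaluated only at $v$ and $\bar v$, so the prox point never has to be localized, and the radius is simply the Lipschitz radius.

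\textbf{Trade-off.} The paper's argument uses only strong convexity of the sum $\Phi$, so it would still work if the curvature came from $P$. Yours is tied to strong convexity of $f$, which is the hypothesis here, but its localization is simpler.

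\textbf{A sharper reading of your expansion.} Write $a=\nabla f(v)-\nabla f(\bar v)$ and $b=v-\bar v$. In $\langle a+r,b+r\rangle\le 0$ the term $\|r\|^2$ has a favorable sign. You actually obtain $\sigma t^2+s^2\le (L+1)ts$, hence $t\le (L+1)s/\sigma$ directly. So the quadratic inequality you flag as the main obstacle is not needed. The weaker bound $\sigma t^2\le (L+1)ts+s^2$ that you wrote is still true and suffices.
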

\begin{proof}
	See Appendix~\ref{app:strongly-convex-primal-eb}.
\end{proof}

\begin{remark}[Relation with existing nonpolyhedral analyses] \label{rem:strong-convex-nonpolyhedral-literature}
Error bounds for \(\ell_{1,p}\)-regularized problems, including group Lasso, are studied in \cite{ZhouZhangSo2015,zhouso2017}. For nuclear-norm regularization, error-bound and convergence results are given in \cite{HouZhouSoLuo2013,zhouso2017}. Drusvyatskiy and Lewis~\cite{DrusvyatskiyLewis2018} develop a more general error-bound and quadratic-growth framework that also covers group-norm and nuclear-norm regularizers under appropriate regularity conditions. For convex clustering, Ho, Lin, and Jordan~\cite{HoLinJordan2022} established a global residual error bound for a class of dual filtering-clustering problems that includes \(\ell_2\)-convex clustering, and used it to establish linear convergence of first-order methods.

The error-bound results cited above are obtained under assumptions different from those imposed in Proposition~\ref{prop:strongly-convex-arbitrary-regularizer}. In this regime, the local error bound for the primal problem follows directly from strong convexity and local Lipschitz continuity of the gradient, while the   local error bound for the dual problem is obtained through metric subregularity of the reduced KKT mapping. Thus the present result does not subsume the more general non-strongly-convex cases treated in the cited works.
\end{remark}

%
%
%

\begin{example}[Weighted convex clustering	{\cite{Pelckmans2005}}]	\label{ex:weighted-convex-clustering}
	
	Let $\mathsf A=[a_1,\ldots,a_N]\in\mathbb R^{d\times N}$ be a data matrix whose columns are the observations. Consider
	\begin{equation}
		\label{eq:weighted-convex-clustering}
		\min_{X\in\mathbb R^{d\times N}}
		\left\{
		\frac12\sum_{i=1}^{N}\|x_i-a_i\|_2^2
		+
		\gamma\sum_{1\leq i<j\leq N}
		w_{ij}\|x_i-x_j\|_p
		\right\},
	\end{equation}
	where
	\[
	X=[x_1,\ldots,x_N],\qquad
	\gamma>0,\qquad
	w_{ij}=w_{ji}\geq0,\qquad
	p\geq1.
	\]
	Convex clustering based on fusion penalties dates back to Pelckmans et al.~\cite{Pelckmans2005}.
	
	Viewing \(\mathbb R^{d\times N}\) as a Euclidean space equipped with 	the Frobenius inner product, define
	\[
	f_1(X)
	:=
	\frac12\|X-\mathsf A\|_F^2,
	\]
	and
	\[
	f_2(X)
	:=
	\gamma\sum_{1\leq i<j\leq N}
	w_{ij}\|x_i-x_j\|_p.
	\]
	We use the identity composite representation $	A=I,\  b=0.$
	The function \(f_1\) is \(1\)-strongly convex and has 	\(1\)-Lipschitz continuous gradient, while \(f_2\) is finite-valued,  proper, closed, and convex. In particular, \(f_2\) need not be 	polyhedral; for example, it is generally nonpolyhedral for  \(p=2\) when \(d>1\).
	
	Therefore, Proposition~	\ref{prop:strongly-convex-arbitrary-regularizer} yields the primal 	and dual local error bounds, the two Luo--Tseng error bounds, and  metric subregularity of the reduced KKT mapping.
\end{example}

%
%
%
%

\subsection{Error bounds from KKT metric subregularity beyond
	polyhedrality and strong convexity}
\label{subsec:kkt-msr-beyond-structural}

The preceding two subsections derive error bounds from two standard
structural regimes: polyhedrality and strong convexity. We now
consider a class in which neither property is required. Instead, we
start directly from metric subregularity of the reduced KKT mapping
and use the primal--dual framework developed in
Section~\ref{sec:primal-dual-eb-relation} to derive error bounds for
both the primal and dual problems.

A representative setting is regularized least squares with a
possibly rank-deficient design matrix and a nonpolyhedral norm
regularizer. The following result isolates the KKT-to-error-bound
step needed below.
	Consider
\begin{equation}
	\label{eq:rank-deficient-regularized-ls}
	\min_{v\in\mathbb R^n}
	\left\{
	F(v)
	:=
	\frac12\|Bv-y\|^2+P(v)
	\right\},
\end{equation}
where
\[
B\in\mathbb R^{m\times n},
\qquad
y\in\mathbb R^m,
\]
and \(P:\mathbb R^n\to\mathbb R\) is a norm. No rank assumption is
imposed on \(B\).

Use the identity composite representation
\[
f_1(v)
:=
\frac12\|Bv-y\|^2,
\qquad
f_2(v):=P(v),
\qquad
A=I,
\qquad
b=0.
\]
The associated reduced KKT mapping is
\begin{equation}
	\label{eq:rank-deficient-ls-kkt}
	\mathcal M_P(v,x)
	:=
	\begin{pmatrix}
		B^\top(Bv-y)+x\\[1mm]
		v-\partial P^*(x)
	\end{pmatrix}.
\end{equation}

\begin{proposition}[KKT-based error bounds for regularized least squares]
	\label{prop:kkt-eb-rank-deficient-ls}
	Suppose that \(\mathcal M_P\) is metrically subregular at every
	\((\bar v,\bar x)\in\mathcal S\) for \(0\). Then:
	
	\begin{enumerate}[label=\textnormal{(\roman*)},leftmargin=*]
		
		\item
		The primal local error bound holds at every
		\(\bar v\in\mathcal V\).
		
		\item
		The dual local error bound holds at every
		\(\bar x\in\Lambda\).
		
		\item
		Both the primal and dual Luo--Tseng error bounds hold.
		
	\end{enumerate}
\end{proposition}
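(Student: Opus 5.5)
For part (i) I will use Corollary~\ref{cor:full-row-rank-kkt-msr-implies-primal-eb}. In the identity representation we have \(A=I\), which has full row rank. So metric subregularity of \(\mathcal M_P\) at each \((\bar v,\bar x)\in\mathcal S\) gives the primal local error bound at \(\bar v\). Proposition~\ref{prop:kkt-solution-product} shows that every \(\bar v\in\mathcal V\) is paired with some \(\bar x\in\Lambda\) so that \((\bar v,\bar x)\in\mathcal S\). Hence the bound holds at every \(\bar v\in\mathcal V\).

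For part (ii) I will apply Theorem~\ref{thm:kkt-msr-implies-dual-eb}, which requires Assumptions~\ref{ass:dual-subdifferential} and \ref{ass:dual-localization}.

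\emph{Exact sum rule.} Since \(P\) is a norm, \(P^*\) is the indicator of the dual-norm unit ball \(\mathbb D\), which contains \(0\) in its interior. A direct computation gives
\[
f_1^*(w)=\tfrac12\|z\|^2+\langle z,y\rangle+\delta_{\operatorname{range}(B^\top)}(w),
\qquad w=B^\top z .
\]
Thus \(\operatorname{dom} f_1^*(-\cdot)\) is a subspace, which contains \(0\in\operatorname{int}\mathbb D\). The relative-interior qualification of \cite[Theorem~23.8]{Rockafellar1970} therefore holds, and together with the linear chain rule for \(-I\) this gives the exact formula for \(\partial d\).

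\emph{Localization.} Here \(\partial f_1^*(-x)=\{v:B^\top(Bv-y)=-x\}\) is an affine set \(w(x)+\ker B\), where \(w(x)\) depends linearly on \(x\). Also \(\partial P^*(x)=N_{\mathbb D}(x)\). The vectors furnished by Lemma~\ref{lem:dual-residual-controls-kkt} are the \(v\)-components of pairs \((u,v)\in N_{\mathbb D}(x)\times(w(x)+\ker B)\) that minimize \(\|u-v\|\). Among these minimizers I will choose the one whose \(v\)-component lies closest to \(\bar v\), and define \(v_D(x)\) as that component.

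I must then show \(v_D(x)\to\bar v\) as \(x\to\bar x\) within \(\operatorname{dom}\partial d\). I will argue by contradiction with a sequence. Any cluster point of \(v_D(x_k)\) lies in \(\{v:B^\top(Bv-y)=-\bar x\}\cap N_{\mathbb D}(\bar x)=\mathcal V\). This uses that \(r_D(x_k)\to0\) near \(\bar x\), which itself needs checking. The hard part is excluding cluster points in \(\mathcal V\setminus\{\bar v\}\), and unboundedness along \(\ker B\). I expect this continuity step to be the main obstacle.

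The first thing to try is the shift invariance: replacing \(v\) by \(v+k\) with \(k\in\ker B\) only matters through \(N_{\mathbb D}(x)\). Combined with the closest-to-\(\bar v\) choice, this should keep the selection bounded. If that fails, the fallback is not to insist on \(v_D(\bar x)=\bar v\). Instead, I would apply subregularity at a different reference pair \((\hat v,\bar x)\in\mathcal S\) near the selected point, using that subregularity is assumed at \emph{every} point of \(\mathcal S\). A compactness argument over the relevant bounded part of \(\mathcal V\) would then extract a uniform constant and neighborhood.

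For part (iii), the Luo--Tseng bounds follow from Proposition~\ref{prop:local-sublevel-relations}(iii)--(iv), once level-boundedness is checked on each side. On the dual side, \(A=I\) has full row rank, so \(d\) is level-bounded by Corollary~\ref{cor:level-boundedness-sufficient-conditions}(ii). On the primal side, \(F\ge P\), and a norm on \(\mathbb R^n\) is coercive, so \(F\) is level-bounded. Parts (i) and (ii) supply the local error bounds at every solution, and Proposition~\ref{prop:local-sublevel-relations}(iii)--(iv) then yields both Luo--Tseng error bounds.
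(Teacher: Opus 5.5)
Your parts (i) and (iii) are fine and match the paper. For dual level-boundedness you use Corollary~\ref{cor:level-boundedness-sufficient-conditions}(ii) with \(A=I\), while the paper uses that \(\operatorname{dom}d\subseteq C_P^*\) is compact; both work. Your exact sum rule for \(\partial d\) also matches. Note that Proposition~\ref{prop:kkt-solution-product} presupposes Assumption~\ref{ass:basic-primal-dual-setting}, which the paper checks for this model via coercivity of \(P\) and \(\bar x:=-B^\top(B\bar v-y)\).

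\textbf{The primary route in (ii) fails.} Theorem~\ref{thm:kkt-msr-implies-dual-eb} needs Assumption~\ref{ass:dual-localization} at the prescribed pair, relative to all of \(\operatorname{dom}\partial d\). This can fail under the proposition's hypotheses.
\begin{itemize}
\item Take \(n=2\), \(B=(1\;\;1)\), \(y=3\), \(P=\|\cdot\|_1\). Then \(\mathcal M_P\) is piecewise polyhedral, hence metrically subregular everywhere.
\item Here \(\mathcal V=\{v\geq0: v_1+v_2=2\}\), \(\Lambda=\{(1,1)\}\) and \(\operatorname{dom}d=\{(t,t):|t|\leq1\}\).
\item For \(|t|<1\), \(\partial P^*(t,t)=\{0\}\), so the vector from Lemma~\ref{lem:dual-residual-controls-kkt} is forced: \(v_D(t,t)=\tfrac{3-t}{2}(1,1)\to(1,1)\).
\item Hence no selection with \(v_D(\bar x)=(2,0)\) is continuous at \(\bar x\), although \(((2,0),(1,1))\in\mathcal S\). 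Nothing in your plan identifies a \(\bar v\) that does work.
\item In the same example \(r_D(t,t)=(3-t)/\sqrt2\to\sqrt2\), so your expectation that \(r_D(x_k)\to0\) as \(x_k\to\bar x\) is false.
\end{itemize}
Points with \(r_D\) bounded away from zero must be discarded: if \(\|x-\bar x\|\leq\rho\) and \(r_D(x)\geq\epsilon\), then \(\operatorname{dist}(x,\Lambda)\leq\|x-\bar x\|\leq(\rho/\epsilon)r_D(x)\). Alternatively, argue by contradiction with \(x^k\to\bar x\) and \(\operatorname{dist}(x^k,\Lambda)>k\,r_D(x^k)\), which forces \(r_D(x^k)<1/k^2\); this is what the paper does.

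\textbf{The fallback lacks its key step.} Applying subregularity at whatever pair \((v^*,\bar x)\in\mathcal S\) the dual-furnished vectors accumulate at is the paper's strategy, and it is where ``at every point of \(\mathcal S\)'' is used. What is missing is boundedness of the vectors \(v^k\in\partial f_1^*(-x^k)\) with \(v^k+e^k\in N_{C_P^*}(x^k)\), where \(e^k\) is the minimal-norm element of \(\partial d(x^k)\). The closest-to-\(\bar v\) rule does not give this: if the whole admissible set drifts to infinity along \(\ker B\), so does the selection. Shift invariance says nothing about it either.

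The paper's argument runs as follows. Suppose \(\|v^k\|\to\infty\) and \(v^k/\|v^k\|\to z\).
\begin{itemize}
\item \(z\in\ker Q\), because the \(\operatorname{range}(Q)\)-component \(Q^\dagger(c-x^k)\) of \(v^k\) stays bounded.
\item \(z\in N_{C_P^*}(\bar x)\), because \((v^k+e^k)/\|v^k\|\in N_{C_P^*}(x^k)\), \(e^k\to0\), and the normal-cone graph is closed.
\item Hence \(\hat v+tz\in\mathcal V\) for all \(\hat v\in\mathcal V\) and \(t\geq0\). This contradicts the boundedness of \(\mathcal V\) that follows from coercivity of the norm.
\end{itemize}
With boundedness in hand, any cluster point \(v^*\) satisfies \((v^*,\bar x)\in\mathcal S\) by closedness of the graphs of \(\partial f_1^*\) and \(\partial P^*\). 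Also \(r_{\rm KKT}(v^k,x^k)\leq r_D(x^k)\). Subregularity at \((v^*,\bar x)\) then gives \(\operatorname{dist}(x^k,\Lambda)\leq\kappa\,r_D(x^k)\), contradicting the choice of \(x^k\).

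Your compactness-and-covering version over \(\mathcal V\) would also work once both ingredients are supplied: restriction to small residuals, and boundedness of the \(v\)-vectors. The sequential argument simply avoids the uniformization.
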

\begin{proof}
	See Appendix~\ref{app:proof:kkt-eb-rank-deficient-ls}.
\end{proof}

\begin{example}[\(\ell_{1,q}\)-regularized least squares and Group Lasso]
	\label{ex:group-lasso-kkt-msr}
	
	Let \(\mathcal J\) be a partition of \(\{1,\ldots,n\}\), and consider
	\begin{equation}
		\label{eq:l1q-regularized-ls}
		\min_{v\in\mathbb R^n}
		\left\{
		\frac12\|Bv-y\|^2
		+
		\lambda
		\sum_{J\in\mathcal J}
		\omega_J\|v_J\|_q
		\right\},
	\end{equation}
	where $	\lambda>0,\ 	\omega_J>0,\ 	q\in(1,2].$	
	No rank assumption is imposed on
	\(B\in\mathbb R^{m\times n}\). The case \(q=2\) is the standard
	Group Lasso introduced by Yuan and Lin~\cite{YuanLin2006}.
	
	Define
	\[
	P_q(v)
	:=
	\lambda
	\sum_{J\in\mathcal J}
	\omega_J\|v_J\|_q.
	\]
	Then \(P_q\) is a norm and is generally nonpolyhedral when
	\(q>1\) and at least one group has dimension greater than one.
	Moreover, if \(B\) is rank deficient, the least-squares term is not
	strongly convex. Hence this model need not fall into either of the
	two structural regimes considered in the preceding subsections.
	
	To establish metric subregularity of the reduced KKT mapping, write
	\eqref{eq:l1q-regularized-ls} equivalently as
	\[
	\begin{aligned}
		\min_{v,z}\quad&
		\frac12\|Bv-y\|^2+P_q(z)\\
		\textnormal{s.t.}\quad&
		v-z=0.
	\end{aligned}
	\]
	In the framework of Yuan, Zeng, and Zhang~
	\cite{YuanZengZhang2020}, the constraint matrix associated with
	\(v\) is \(I\), and hence their full-row-rank condition is
	automatically satisfied, independently of the rank of the design
	matrix \(B\). Moreover, the smooth term has the structured form
	\[
	h(Bv),
	\qquad
	h(u)=\frac12\|u-y\|^2,
	\]
	so their smoothness assumptions are satisfied.
	
	By \cite[Theorem~50]{YuanZengZhang2020}, the corresponding reduced
	saddle-point mapping
	\[
	T(v,\mu)
	=
	\begin{pmatrix}
		B^\top(Bv-y)-\mu\\[1mm]
		v-\partial P_q^*(-\mu)
	\end{pmatrix}
	\]
	is metrically subregular at every KKT point for
	\(q\in[1,2]\). Since
	\[
	T(v,\mu)
	=
	\mathcal M_{P_q}(v,-\mu),
	\]
	and the transformation
	\((v,\mu)\mapsto(v,-\mu)\) is an isometry,
	\(\mathcal M_{P_q}\) is metrically subregular at every corresponding
	KKT point.
	
	Therefore, Proposition~
	\ref{prop:kkt-eb-rank-deficient-ls} yields the primal and dual local
	error bounds and both Luo--Tseng error bounds for
	\eqref{eq:l1q-regularized-ls}. In particular, these conclusions hold
	for the standard Group Lasso without any full-column-rank assumption
	on the design matrix \(B\).
\end{example}

\begin{remark}[Relation with existing error-bound analyses]
	\label{rem:group-lasso-kkt-route}
	
	Primal error bounds for Group Lasso and more general
	\(\ell_{1,q}\)-regularized models have been established previously;
	see, for example,
	\cite{Tseng2010,ZhouZhangSo2015,zhouso2017}.
	The purpose of Example~\ref{ex:group-lasso-kkt-msr} is not to
	rederive these primal results through another regularizer-specific
	error-bound analysis. Instead, the starting point is the independently
	established metric subregularity of the reduced saddle-point mapping
	in \cite{YuanZengZhang2020}. Once this mapping is identified with the
	reduced KKT mapping used here, Proposition~
	\ref{prop:kkt-eb-rank-deficient-ls} yields the primal and dual error
	bounds within the same framework.
	
	Thus no separate proximal-residual error-bound argument is required
	on the primal side, and the same KKT regularity property also yields
	the corresponding dual error bounds. This applies even when the
	least-squares loss is not strongly convex and the regularizer is
	nonpolyhedral.
\end{remark}

\begin{remark}[Sparse-group Lasso]
	The same argument applies to the sparse-group Lasso regularizer
	\[
	P_{\rm sg}(v)
	:=
	\mu\|v\|_1
	+
	\lambda
	\sum_{J\in\mathcal J}
	\omega_J\|v_J\|_2,
	\]
	where $	\mu\geq0,	\ 	\lambda>0,	\ 	\omega_J>0.$
	This function is a norm, and is generally nonpolyhedral when at least
	one group has dimension greater than one. Yuan, Zeng, and Zhang~
	\cite[Theorem~50]{YuanZengZhang2020} also establish metric
	subregularity of the corresponding reduced saddle-point mapping for
	the sparse-group Lasso. Hence Proposition~
	\ref{prop:kkt-eb-rank-deficient-ls} again yields the primal and dual
	local error bounds and both Luo--Tseng error bounds, without a
	full-column-rank assumption on the design matrix.
\end{remark}

In Table~\ref{tab:existing-eb-comparison},
\(\checkmark\) indicates direct compatibility with the present
framework: the cited error-bound result can be recovered, or the
cited KKT-regularity result directly supplies the required
regularity condition; \(\triangle\) denotes partial coverage or a
related but nonidentical formulation or residual; and \(\times\)
indicates that the cited result is not covered under the current
assumptions.

\begin{sidewaystable}[p]
	\centering
	
	\fontsize{8}{11}\selectfont
	\setlength{\tabcolsep}{1.4pt}
	\renewcommand{\arraystretch}{0.82}
	
	\caption{Representative error-bound and KKT-regularity results and
		their relation to the present framework.}
	\label{tab:existing-eb-comparison}
	
	\begin{tabularx}{0.96\textheight}{
			>{\raggedright\arraybackslash}p{2.5cm}
			>{\raggedright\arraybackslash}p{2.8cm}
			>{\raggedright\arraybackslash}p{3.0cm}
			>{\raggedright\arraybackslash}X
			>{\raggedright\arraybackslash}p{4.4cm}
		}
		
		\toprule
		Reference
		& \(f_1\)
		& \(f_2\)
		& Main assumptions / result
		& Coverage by the present framework
		\\
		\midrule
		
		Luo--Tseng (1992)
		\cite{LuoTseng1992}
		\newline{\itshape Primal}
		&
		\(h(Bv)+\langle c,v\rangle\)
		&
		\(\delta_X\), \(X\) polyhedral
		&
		\(h\) strictly convex and essentially smooth, with positive
		local curvature; local projection-gradient EB.
		&
		\(\triangle\)
		Some finite-valued subclasses are covered, but not the full
		essentially smooth setting.
		\\
\cmidrule(lr){1-5}
		
		Luo--Tseng (1993)
		\cite{LuoTseng1993b}
		\newline{\itshape Dual}
		&
		\(f\)
		&
		\(\delta_X(u)+\sigma_Y(w)\), with \((u,w)=(v,q-Ev)\)
		&
		Co-finite essentially smooth \(f\), polyhedral \(X,Y\), and
		strict monotonicity and Lipschitz continuity of \(\nabla f\) on
		the relevant compact subsets; dual local EB.
		&
		\(\triangle\)
		Strongly convex finite-valued subclasses overlap with our setting,
		but the dual formulation and residual differ.
		\\
\cmidrule(lr){1-5}
		
		Tseng--Yun (2009)
		\cite{TsengYun2009}
		\newline{\itshape Primal}
		&
		Quadratic \(f\)
		&
		\(cP\), \(P\) polyhedral
		&
		One sufficient case for their local residual EB; \(f\) need not
		be convex.
		&
		\(\triangle\)
		Our quadratic--polyhedral result covers the convex subclass and
		additionally yields the associated dual EBs.
		\\
\cmidrule(lr){1-5}
		
		Tseng (2010)
		\cite{Tseng2010}
		\newline{\itshape Primal}
		&
		\(h(Bv)\)
		&
		\(\tau\sum_J\omega_J\|v_J\|_2\)
		&
		\(h\) strongly convex on its open domain, with
		\(\nabla h\) Lipschitz on compact subsets; Group-Lasso proximal EB
		without full-column-rank \(B\).
		&
		\(\triangle\)
		Finite-valued regression subclasses are covered via KKT MSR;
		least squares additionally yields the dual EBs.
		\\
\cmidrule(lr){1-5}
		
		Zhou--Zhang--So (2015)
		\cite{ZhouZhangSo2015}
		\newline{\itshape Primal}
		&
		\(h(Bv)\)
		&
		\(\sum_J\omega_J\|v_J\|_q\)
		&
		Primal EB for \(q\in[1,2]\cup\{\infty\}\); it may fail for
		\(q\in(2,\infty)\).
		&
		\(\checkmark\)
		For \(q\in[1,2]\), KKT MSR follows from
		\cite{YuanZengZhang2020}; \(q=\infty\) is PLQ.
		\\
\cmidrule(lr){1-5}
		
		Zhou--So (2017)
		\cite{zhouso2017}
		\newline{\itshape Primal}
		&
		\(h(Bv)+\langle c,v\rangle\)
		&
		General \(P\); in particular \(P(X)=\|X\|_*\)
		&
		General EB framework via bounded linear regularity and calmness.
		For the nuclear norm, an EB is obtained under a
		strict-complementarity-type condition.
		&
		\(\triangle\) General framework;
		\(\times\) noninjective nuclear-norm case.
		Several subclasses are covered, but the general-\(P\) result and the
		genuinely noninjective nuclear-norm case are not.
		\\
\cmidrule(lr){1-5}
		
		Yuan--Zeng--Zhang (2020)
		\cite{YuanZengZhang2020}
		\newline{\itshape KKT}
		&
		\(h(Lv)+\langle q,v\rangle\)
		&
		PLQ, \(\ell_{1,q}\), sparse-group Lasso
		&
		KKT MSR for PLQ regularizers and, under full-row-rank splitting,
		for \(\ell_{1,q}\), \(q\in[1,2]\), and sparse-group Lasso.
		&
		\(\checkmark\)
		Provides the KKT-MSR input used here; for \(v=z\), the splitting
		matrix is \(I\), regardless of the design-matrix rank.
		\\
\cmidrule(lr){1-5}
		
		Ho--Lin--Jordan (2022)
		\cite{HoLinJordan2022}
		\newline{\itshape Dual}
		&
		Strongly convex smooth \(f\)
		&
		\(\lambda\sum_k\|D_kv\|_p\)
		&
		Global dual EB for filtering--clustering models under their
		stated exponent/nondegeneracy conditions.
		&
		\(\triangle\)
		Our primal model is covered, but their dual formulation and
		residual differ from ours.
		\\
\cmidrule(lr){1-5}
		
		De Leone--Lazzari (2010)
		\cite{DeLeoneLazzari2010}
		\newline{\itshape Dual}
		&
		Strongly convex quadratic
		&
		Polyhedral feasible-set indicator
		&
		Global EBs for the duals of \(L_2\)-SVM and SCL2-SVM.
		&
		\(\triangle\)
		Their dual QPs are quadratic--polyhedral, but their global
		feasible-point/primal--dual error bounds are not identical to the
		local or Luo--Tseng error bounds used here.
		\\
		
		\bottomrule
		
	\end{tabularx}
	
\end{sidewaystable}

Table~\ref{tab:applications-literature} summarizes the main applications considered in this paper and compares them with representative existing optimization EB/MSR results. For lifted quadratic--polyhedral formulations, the conclusions refer to the corresponding lifted primal--dual pair. The table is intended to highlight the role of the present framework rather than to provide an exhaustive literature survey.

\begin{sidewaystable}[p]
	\centering
	
	\fontsize{8}{12}\selectfont
	\setlength{\tabcolsep}{1.2pt}
	\renewcommand{\arraystretch}{0.80}
	
	\caption{Applications considered in this paper and their relation to
		existing optimization error-bound results.}
	\label{tab:applications-literature}
	
	\begin{tabularx}{0.96\textheight}{
			>{\raggedright\arraybackslash}p{3.0cm}
			>{\raggedright\arraybackslash}p{3.0cm}
			>{\raggedright\arraybackslash}p{3.6cm}
			>{\raggedright\arraybackslash}X
			>{\raggedright\arraybackslash}p{4.5cm}
		}
		
		\toprule
		Model
		& \(f_1\)
		& \(f_2\)
		& Existing EB/MSR results
		& Present result
		\\
		\midrule
		
		Lasso-type models:
		Lasso, weighted/adaptive Lasso,
		generalized Lasso, SLOPE
		&
		\(\displaystyle
		\frac12\|y-X\beta\|^2
		\)
		&
		\(\lambda\|\beta\|_1\),
		\(\lambda\sum_jw_j|\beta_j|\),
		\(\lambda\|D\beta\|_1\), or
		\(\lambda J_w(\beta)\)
		&
		Classical quadratic--polyhedral EB results apply; see
		\cite{TsengYun2009,zhouso2017}.
		&
		Both local EBs and the dual Luo--Tseng EB hold.
		The primal Luo--Tseng EB follows under level-boundedness;
		for generalized Lasso, a sufficient condition is
		\(\ker X\cap\ker D=\{0\}\).
		\\
		\cmidrule(lr){1-5}
		
		SVC/SVR models in
		Proposition~\ref{prop:svm-error-bounds}
		&
		Quadratic smooth term,
		directly or after lifting
		&
		Piecewise-affine term or
		polyhedral indicator
		&
		Specialized EBs exist for some SVM formulations; see, e.g.,
		\cite{DeLeoneLazzari2010}.
		&
		Both local EBs and the dual Luo--Tseng EB hold for all models
		considered. The primal Luo--Tseng EB also holds for the
		level-bounded cases.
		\\
		\cmidrule(lr){1-5}
		
		Weighted convex clustering
		&
		\(\displaystyle
		\frac12\|X-\mathsf A\|_F^2
		\)
		&
		\(\displaystyle
		\gamma\sum_{i<j}w_{ij}\|x_i-x_j\|_p
		\)
		&
		Ho--Lin--Jordan~\cite{HoLinJordan2022} establish a global
		dual EB for related filtering--clustering models.
		&
		For all \(p\geq1\), both local EBs, both Luo--Tseng EBs,
		and KKT MSR follow from the strongly convex framework.
		\\
		\cmidrule(lr){1-5}
		
		\(\ell_{1,q}\)-regularized least squares,
		Group Lasso, sparse-group Lasso
		&
		\(\displaystyle
		\frac12\|Bv-y\|^2
		\)
		&
		\(\ell_{1,q}\) or sparse-group
		regularizer
		&
		Primal EBs are known from
		\cite{Tseng2010,ZhouZhangSo2015,zhouso2017};
		KKT MSR follows from
		\cite{YuanZengZhang2020}.
		&
		Even for rank-deficient \(B\) and nonpolyhedral regularizers,
		Proposition~\ref{prop:kkt-eb-rank-deficient-ls}
		gives both local EBs and both Luo--Tseng EBs.
		\\
		\cmidrule(lr){1-5}
		
		Polyhedrally constrained
		convex quadratic programming
		&
		\(\displaystyle
		\frac12\langle v,Qv\rangle+\langle c,v\rangle+\gamma
		\)
		&
		\(\delta_\Omega(v)\),
		\(\Omega\) polyhedral
		&
		Classical polyhedral and quadratic--polyhedral EB theory.
		&
		Both local EBs and the dual Luo--Tseng EB hold;
		the primal Luo--Tseng EB holds under level-boundedness,
		in particular when \(Q\succ0\).
		\\
		
		\bottomrule
		
	\end{tabularx}
	
\end{sidewaystable}

\section{Conclusion}\label{sec:conclusion}

This paper developed a primal--dual framework for error bounds in convex composite optimization based on metric subregularity of a reduced KKT mapping. Under the standing solvability and strong-duality assumptions, the zero set of this mapping is the Cartesian product of the primal and dual solution sets. This representation allows the primal proximal-gradient residual, the dual subdifferential residual, and the KKT residual to be analyzed within a common framework.

We established implications in both directions between KKT metric subregularity and the primal and dual   local error bounds. Under full row rank of the composite matrix and local Lipschitz continuity of the smooth gradient, KKT metric subregularity is equivalent to the primal  local error bound. Under strong convexity of the smooth term, it is equivalent to the dual local error bound. If the composite matrix has full row rank and the smooth term is strongly convex, KKT metric subregularity is equivalent to the simultaneous validity of the primal and dual local error bounds. We also established the relations between the local  and Luo--Tseng error bounds. The Luo--Tseng bounds imply the corresponding   local bounds, with local Lipschitz continuity of the smooth gradient required on the primal side, whereas level-boundedness allows   local bounds valid at every solution to be extended to the Luo--Tseng bounds.

The framework was applied to quadratic--polyhedral models, strongly convex smooth models with general convex regularizers, and regularized least-squares models beyond polyhedrality and strong convexity. The quadratic--polyhedral class covers Lasso-type models, SLOPE, several SVC and SVR formulations, and polyhedrally constrained convex quadratic programs. The strongly convex regime includes weighted convex clustering. For \(\ell_{1,q}\)-regularized least squares, including Group Lasso with a possibly rank-deficient design matrix, known metric subregularity of the reduced saddle-point mapping yields both primal and dual local error bounds and both Luo--Tseng error bounds through the present framework.

\section*{Statements and Declarations}

\noindent\textbf{Funding.}
The corresponding author's research is supported by National Natural Science Foundation of China (NSFC) No. 12271526.

\noindent\textbf{Competing interests.}
The authors have no relevant financial or non-financial interests to disclose.

\noindent \textbf{Data availability.}
Data sharing is not applicable to this article as no datasets were generated or analysed during the current study.

\begin{appendices}
	\section{Proofs in Section \ref{sec:Preliminaries}}
	\textbf{Proof of Proposition \ref{prop:kkt-solution-product}}\label{app:kkt-solution-product}
		Let \(v^\star\in\mathcal V\) and \(x^\star\in\Lambda\). By strong duality,
		\[
		\begin{aligned}
			0
			&=F(v^\star)+d(x^\star)\\
			&=
			\Bigl[
			f_1(v^\star)+f_1^*(-A^\top x^\star)
			+\langle A^\top x^\star,v^\star\rangle
			\Bigr]+
			\Bigl[
			f_2(Av^\star+b)+f_2^*(x^\star)
			-\langle x^\star,Av^\star+b\rangle
			\Bigr].
		\end{aligned}
		\]
		Both bracketed terms are nonnegative by  Fenchel’s inequality and therefore must vanish. By~\cite[Theorem~23.5]{Rockafellar1970},
		\[
		-A^\top x^\star\in\partial f_1(v^\star),
		\qquad
		Av^\star+b\in\partial f_2^*(x^\star).
		\]
		Since \(f_1\) is differentiable,
		\[
		\nabla f_1(v^\star)+A^\top x^\star=0.
		\]
		Thus \((v^\star,x^\star)\in\mathcal S\), and consequently $	\mathcal V\times\Lambda\subseteq\mathcal S.$
		
		Conversely, let \((v,x)\in\mathcal S\). Then
		\[
		-A^\top x=\nabla f_1(v)\in\partial f_1(v),
		\qquad
		Av+b\in\partial f_2^*(x).
		\]
		Applying Rockafellar~\cite[Theorem~23.5]{Rockafellar1970} to \(f_1\) and \(f_2^*\), and using \(f_2^{**}=f_2\), we obtain
		\[
		f_1(v)+f_1^*(-A^\top x)
		+\langle A^\top x,v\rangle=0,
		\quad
		f_2(Av+b)+f_2^*(x)
		-\langle x,Av+b\rangle=0.
		\]
		Consequently, \(F(v)+d(x)=0\). Hence
		\[
		\bigl(F(v)-F^\star\bigr)
		+
		\bigl(d(x)-d^\star\bigr)
		=0.
		\]
		Both terms are nonnegative, so \(F(v)=F^\star\) and \(d(x)=d^\star\). Therefore \(v\in\mathcal V\) and \(x\in\Lambda\), which proves$
		\mathcal S\subseteq\mathcal V\times\Lambda.$
		\hfill $\square$
		
\noindent \textbf{Proof of Proposition \ref{prop:local-sublevel-relations}}\label{app:local-sublevel-relations}
	\noindent	\textnormal{(i)}
		Define the proximal-gradient mapping $T(v):=\operatorname{prox}_{G}\bigl(v-\nabla f_1(v)\bigr),$
		so that
		\[
		 r_P(v)=T(v)-v.
		\]
		Since \(\bar v\in\mathcal V\), we have 	\( r_P(\bar v)=0\), and hence \(T(\bar v)=\bar v\).
		
		Let \(U\) be a neighborhood of \(\bar v\) on which \(\nabla f_1\) is \(L\)-Lipschitz continuous. The mapping \(T\) is continuous at \(\bar v\). Hence, after shrinking the neighborhood if necessary, both \(v\) and \(T(v)\) belong to \(U\) whenever \(v\) is sufficiently close to \(\bar v\).
		
		We first show that \(F(T(v))\) is locally bounded above. Set $w:=T(v).$ By the definition of the proximal mapping, comparison with
		\(\bar v\in\operatorname{dom}G\) gives
		\[
		\begin{aligned}
			G(w) +\frac12 \left\| w-v+\nabla f_1(v) \right\|^2
			\leq
			G(\bar v) +\frac12 \left\| \bar v-v+\nabla f_1(v) \right\|^2.
		\end{aligned}
		\]
		Consequently, \(G(w)\) is bounded above for \(v\) sufficiently close to \(\bar v\). Since \(w\to\bar v\) and \(f_1\) is continuous, there exist a finite number \(\zeta>F^\star\) and a neighborhood \(U_0\) of \(\bar v\) such that
		\begin{equation}\label{eq:prox-point-local-sublevel}
			F(T(v))\leq\zeta,
			\qquad
			\forall v\in U_0.
		\end{equation}
		
		Let \(\kappa_\zeta,\epsilon_\zeta>0\) be constants associated with the Luo–Tseng error bound on the set \(\{v:F(v)\leq\zeta\}\).
		
		Using the nonexpansiveness of the proximal mapping and the local Lipschitz continuity of \(\nabla f_1\), we obtain
		\[
		\begin{aligned}
			\| r_P(w)\|
			&=
			\|T(w)-w\|=
			\|T(w)-T(v)\|\\
			&\leq
			\left\|
			w-\nabla f_1(w)
			-
			\bigl(v-\nabla f_1(v)\bigr)
			\right\|\\
			&\leq
			(1+L)\|w-v\|=
			(1+L)\| r_P(v)\|.
		\end{aligned}
		\]
		Since \( r_P\) is continuous and \( r_P(\bar v)=0\), there exists \(\rho_P>0\) such that, whenever \(v\in\mathbb B(\bar v,\rho_P)\),
		\[
		v\in U_0,\ 
		v,T(v)\in U \ \text{and}\ 
		(1+L)\| r_P(v)\|
		\leq
		\epsilon_\zeta.
		\]
		It follows that
		\[
		F(w)\leq\zeta,
		\qquad
		\| r_P(w)\|\leq\epsilon_\zeta.
		\]
		The Luo–Tseng error bound therefore yields
		\[
		\operatorname{dist}(w,\mathcal V)
		\leq
		\kappa_\zeta\| r_P(w)\|.
		\]
		Consequently,
		\[
		\begin{aligned}
			\operatorname{dist}(v,\mathcal V)
			&\leq
			\|v-w\|
			+
			\operatorname{dist}(w,\mathcal V)\\
			&\leq
			\| r_P(v)\|
			+
			\kappa_\zeta\| r_P(w)\|\\
			&\leq
			\left[
			1+\kappa_\zeta(1+L)
			\right]
			\| r_P(v)\|.
		\end{aligned}
		\]
		Thus the   local error bound holds at \(\bar v\), with
		$	\kappa_P	:=	1+\kappa_\zeta(1+L).$

	\noindent	\textnormal{(ii)}
			Choose any \(\eta>d^\star\), and let \(\kappa_\eta,\epsilon_\eta>0\) be constants associated with the dual Luo--Tseng error bound on the sublevel set $\{x:d(x)\leq\eta\}.$ Choose \(\rho_D>0\) such that
			\begin{equation}\label{eq:dual-local-radius-choice}
				\epsilon_\eta\rho_D
				\leq
				\eta-d^\star.
			\end{equation}
			
			Take any \(x\in\mathbb B(\bar x,\rho_D)\).
			
			If \(r_D(x)=+\infty\), the desired inequality holds automatically. Suppose first that $r_D(x)>\epsilon_\eta.$
			Since \(\bar x\in\Lambda\),
			\[
			\begin{aligned}
				\operatorname{dist}(x,\Lambda)
				&\leq
				\|x-\bar x\| \leq
				\rho_D <
				\frac{\rho_D}{\epsilon_\eta}r_D(x).
			\end{aligned}
			\]
			
			It remains to consider the case $r_D(x)\leq\epsilon_\eta.$ Then \(r_D(x)<+\infty\), and hence there exists \(e\in\partial d(x)\) satisfying
			\[
			\|e\|=r_D(x).
			\]
			By the subgradient inequality and the fact that
			\(d(\bar x)=d^\star\),
			\[
			\begin{aligned}
				d(x)-d^\star
				&\leq
				\langle e,x-\bar x\rangle \leq
				\|e\|\|x-\bar x\| \leq
				\epsilon_\eta\rho_D \leq
				\eta-d^\star.
			\end{aligned}
			\]
			Therefore, $d(x)\leq\eta.$ The dual Luo--Tseng error bound applies and yields
			\[
			\operatorname{dist}(x,\Lambda)
			\leq
			\kappa_\eta r_D(x).
			\]
			
			Combining the two cases, we obtain
			\[
			\operatorname{dist}(x,\Lambda)
			\leq
			\widehat\kappa_D r_D(x),
			\qquad
			\forall x\in\mathbb B(\bar x,\rho_D),
			\]
			where
			$\widehat\kappa_D:=\max\left\{	\kappa_\eta,	\frac{\rho_D}{\epsilon_\eta}	\right\}.$
			Hence the   local error bound holds at \(\bar x\).

	\noindent	\textnormal{(iii)}
			Fix \(\zeta\geq F^\star\), and define
			$
			L_\zeta
			:=
			\{v\in\mathbb R^n:F(v)\leq\zeta\}.
			$
			Since \(F\) is closed and level-bounded, \(L_\zeta\) is compact.
			
			Suppose, to the contrary, that the Luo–Tseng error bound fails on \(L_\zeta\). Then, for every integer \(k\geq1\), there exists \(v^k\in L_\zeta\) such that
			\begin{equation}\label{eq:failure-lt-sequence}
				\| r_P(v^k)\|
				\leq
				\frac1k,
				\qquad
				\operatorname{dist}(v^k,\mathcal V)
				>
				k\| r_P(v^k)\|.
			\end{equation}
			
			By compactness of \(L_\zeta\), after passing to a subsequence, there exists \(\widehat v\in L_\zeta\) such that $v^k\to\widehat v.$ The mapping \( r_P\) is continuous because \(\operatorname{prox}_G\) is nonexpansive and \(\nabla f_1\) is continuous. Since \( r_P(\widehat v)=0\), we have \(\widehat v\in\mathcal V\).
			
			By assumption, the   local error bound holds at \(\widehat v\). Thus there exist constants \(\widehat\rho_P,\widehat\kappa_P>0\) such that
			\[
			\operatorname{dist}(v,\mathcal V)
			\leq
			\widehat\kappa_P\| r_P(v)\|,
			\qquad
			\forall v\in
			\mathbb B(\widehat v,\widehat\rho_P).
			\]
			Since \(v^k\to\widehat v\), this inequality applies to \(v^k\) for all sufficiently large \(k\). It follows that
			\[
			\operatorname{dist}(v^k,\mathcal V)
			\leq
			\widehat\kappa_P\| r_P(v^k)\|,
			\]
			which contradicts \eqref{eq:failure-lt-sequence} whenever \(k>\widehat\kappa_P\).
			
			Therefore, the Luo–Tseng error bound holds on \(L_\zeta\). Since \(\zeta\geq F^\star\) was arbitrary, the conclusion follows.

	\noindent	\textnormal{(iv)}
			Fix \(\eta\geq d^\star\), and define $L_\eta^D:=\{x\in\mathbb R^m:d(x)\leq\eta\}.$ Since \(d\) is closed and level-bounded, \(L_\eta^D\) is compact.
			
			Suppose, to the contrary, that the dual Luo--Tseng error bound fails on \(L_\eta^D\). Then, for every integer \(k\geq1\), there exists
			\(x^k\in L_\eta^D\) such that
			\begin{equation}\label{eq:failure-dual-eb-sequence}
				r_D(x^k)
				\leq
				\frac1k,
				\qquad
				\operatorname{dist}(x^k,\Lambda)
				>
				k r_D(x^k).
			\end{equation}
			
			By compactness, after passing to a subsequence, there exists \(\widehat x\in L_\eta^D\) such that $x^k\to\widehat x.$
			Since \(r_D(x^k)<+\infty\), for every \(k\) we may choose \(e^k\in\partial d(x^k)\) satisfying
			\[
			\|e^k\|=r_D(x^k).
			\]
			It follows that $e^k\to0.$ The graph of the subdifferential of a proper closed convex function is closed. Therefore, $0\in\partial d(\widehat x),$ and hence $\widehat x\in\Lambda.$
			
			By assumption, the   local error bound holds at \(\widehat x\). Thus there exist constants \(\widehat\rho_D,\widehat\kappa_D>0\) such that
			\[
			\operatorname{dist}(x,\Lambda)
			\leq
			\widehat\kappa_D r_D(x),
			\qquad
			\forall x\in
			\mathbb B(\widehat x,\widehat\rho_D).
			\]
			Since \(x^k\to\widehat x\), this inequality applies to \(x^k\) for all sufficiently large \(k\). Therefore,
			\[
			\operatorname{dist}(x^k,\Lambda)
			\leq
			\widehat\kappa_D r_D(x^k),
			\]
			contradicting \eqref{eq:failure-dual-eb-sequence} whenever \(k>\widehat\kappa_D\).
			
			Thus the dual Luo--Tseng error bound holds on \(L_\eta^D\). Since \(\eta\geq d^\star\) was arbitrary, the conclusion follows.\hfill $\square$
			
			\noindent \textbf{Proof of Corollary \ref{cor:level-boundedness-sufficient-conditions}}\label{app:cor:level-boundedness-sufficient-conditions}
			(i) Suppose first that \(f_1\) is \(\mu\)-strongly convex for some \(\mu>0\). Since $G(v)$ is convex, the primal objective \(F=f_1+G\) is also \(\mu\)-strongly convex. Let \(\bar v\in\mathcal V\). Since \(0\in\partial F(\bar v)\), strong convexity gives
			\[
			F(v)
			\geq
			F(\bar v)+\frac{\mu}{2}\|v-\bar v\|^2,
			\qquad
			\forall v\in\mathbb R^n.
			\]
			Hence every lower level set of \(F\) is bounded, and therefore \(F\) is level-bounded.
			
			(ii) Suppose next that \(A\) has full row rank. Fix any \(\bar v\in\mathcal V\), and let \(x_1,x_2\in\Lambda\).
			By Proposition~\ref{prop:kkt-solution-product}, $(\bar v,x_1),(\bar v,x_2)\in\mathcal S.$
			The stationarity condition for the reduced KKT system therefore gives
			\[
			\nabla f_1(\bar v)+A^\top x_1=0,
			\qquad
			\nabla f_1(\bar v)+A^\top x_2=0.
			\]
			Consequently,
			\[
			A^\top(x_1-x_2)=0.
			\]
			Since \(A\) has full row rank, \(A^\top\) is injective, and hence \(x_1=x_2\). 
			
			Thus \(\Lambda=\{\bar x\}\) is a singleton. Suppose, to the contrary, that \(d\) is not level-bounded. Then there exist \(\alpha\in\mathbb R\) and a sequence \(\{x^k\}\) such that
			\[
			d(x^k)\leq\alpha, \qquad \|x^k-\bar x\|\to\infty.
			\]
			Set $t_k:=\|x^k-\bar x\|, \ u^k:=\frac{x^k-\bar x}{t_k}$.
			Passing to a subsequence if necessary, we may assume that \(u^k\to u\) with \(\|u\|=1\). For any fixed \(t>0\), convexity gives, for all sufficiently large \(k\),
			\[
			\begin{aligned}
				d(\bar x+t u^k)
				&\leq
				\left(1-\frac{t}{t_k}\right)d(\bar x) +
				\frac{t}{t_k}d(x^k).
			\end{aligned}
			\]
			Letting \(k\to\infty\) and using the closedness of \(d\), we obtain
			\[
			d(\bar x+t u)\leq d(\bar x)=d^\star.
			\]
			Hence \(\bar x+t u\in\Lambda\) for every \(t>0\), contradicting \(\Lambda=\{\bar x\}\). Therefore \(d\) is level-bounded.

		\section{Proofs in Section \ref{sec:primal-dual-eb-relation}}
		\textbf{Proof of Lemma \ref{lem:primal-residual-controls-kkt}}\label{app:proof:lem:primal-residual-controls-kkt}
			Let $w:=\operatorname{prox}_{G}(v-\nabla f_1(v)).$
			Then \( r_P(v)=w-v\). By the optimality condition of the proximal subproblem,
			\[
			0\in w-(v-\nabla f_1(v))+\partial G(w).
			\]
			Equivalently,
			\[
			-\nabla f_1(v)- r_P(v)\in\partial G(w).
			\]
			Since \(G=f_2\circ(A\cdot+b)\), the exact chain rule gives
			\[
			\partial G(w)=A^\top\partial f_2(Aw+b).
			\]
			Therefore, there exists \(x_P(v)\in\partial f_2(Aw+b)\) such that
			\[
			A^\top x_P(v)=-\nabla f_1(v)- r_P(v).
			\]
			Hence
			\[
			\nabla f_1(v)+A^\top x_P(v)=- r_P(v).
			\]
			Moreover, since \(x_P(v)\in\partial f_2(Aw+b)\), the conjugacy relation gives
			\[
			Aw+b\in\partial f_2^*(x_P(v)).
			\]
			Thus
			\[
			\begin{aligned}
				r_{\rm KKT}(v,x_P(v))^2
				&=
				\|\nabla f_1(v)+A^\top x_P(v)\|^2
				+
				\operatorname{dist}^2(Av+b,\partial f_2^*(x_P(v))) \\
				&\le
				\| r_P(v)\|^2
				+
				\|Av+b-(Aw+b)\|^2 \\
				&=
				\| r_P(v)\|^2+\|A(v-w)\|^2 \\
				&\le
				(1+\|A\|^2)\| r_P(v)\|^2 .
			\end{aligned}
			\]
			Taking square roots yields
			\[
			r_{\rm KKT}(v,x_P(v))
			\le
			\sqrt{1+\|A\|^2}\,\| r_P(v)\|.
			\]\hfill $\square$
			
			\noindent \textbf{Proof of Corollary \ref{cor:full-row-rank-kkt-msr-implies-primal-eb} }\label{app:proof:cor:full-row-rank-kkt-msr-implies-primal-eb}
	Let \(\kappa_{\rm KKT}>0\) and \(\epsilon_{\rm KKT}>0\) be constants associated with the metric subregularity of \(\mathcal M\) at \((\bar v,\bar x)\) for \(0\).
	
	Since \(A\) has full row rank, the affine mapping $v\mapsto Av+b$ is surjective. Hence the exact subdifferential chain rule
	\[
	\partial(f_2\circ(A\cdot+b))(w)
	=
	A^\top\partial f_2(Aw+b)
	\]
	holds for every \(w\in\mathbb R^n\).
	
	For every \(v\in\mathbb R^n\), choose \(x_P(v)\) as in Lemma~\ref{lem:primal-residual-controls-kkt}. Then
	\[
	A^\top x_P(v) = -\nabla f_1(v)- r_P(v).
	\]
	Since \(A^\top\) is injective, \(x_P(v)\) is uniquely determined and satisfies
	\begin{equation}\label{eq:explicit-primal-selection}
		x_P(v)
		=
		-(AA^\top)^{-1}A
		\bigl(\nabla f_1(v)+ r_P(v)\bigr).
	\end{equation}
	
	The mappings \(\nabla f_1\) and \( r_P\) are continuous, and hence \(x_P(\cdot)\) is continuous. Moreover, $r_P(\bar v)=0$
	because \(\bar v\in\mathcal V\), while
	\[
	\nabla f_1(\bar v)+A^\top\bar x=0
	\]
	because \((\bar v,\bar x)\in\mathcal S\). Therefore,
	\[
	\begin{aligned}
		x_P(\bar v)
		&=
		-(AA^\top)^{-1}A\nabla f_1(\bar v) =
		(AA^\top)^{-1}AA^\top\bar x =
		\bar x.
	\end{aligned}
	\]
	
	By the continuity of \(x_P(\cdot)\) at \(\bar v\), there exists
	\(\rho_P>0\) such that
	\[
	(v,x_P(v))
	\in
	\mathbb B((\bar v,\bar x),\epsilon_{\rm KKT})
	\]
	for every \(v\in\mathbb B(\bar v,\rho_P)\). Thus the localization condition in Theorem~\ref{thm:kkt-msr-implies-primal-eb} is satisfied. The conclusion follows from that theorem.\hfill $\square$
	
	\noindent \textbf{Proof of Lemma \ref{lem:dual-residual-controls-kkt}}\label{app:proof:lem:dual-residual-controls-kkt}
	Take any \(x\) with \(r_D(x)<+\infty\). Since
	\[
	r_D(x)=\operatorname{dist}(0,\partial d(x)),
	\]
	and \(\partial d(x)\) is closed and convex whenever it is nonempty, there exists \(e_D(x)\in\partial d(x)\) such that
	\[
	\|e_D(x)\|=r_D(x).
	\]
	By the assumed exact subdifferential formula, there exist
	\[
	u_D(x)\in\partial f_2^*(x),
	\qquad
	v_D(x)\in\partial f_1^*(-A^\top x)
	\]
	such that
	\[
	e_D(x)=u_D(x)-Av_D(x)-b.
	\]
	Since \(v_D(x)\in\partial f_1^*(-A^\top x)\), the conjugacy relation gives
	\[
	-A^\top x\in\partial f_1(v_D(x)).
	\]
	Because \(f_1\) is differentiable, we have
	\[
	\nabla f_1(v_D(x))+A^\top x=0.
	\]
	Moreover, since \(u_D(x)\in\partial f_2^*(x)\),
	\[
	\operatorname{dist}(Av_D(x)+b,\partial f_2^*(x))
	\le
	\|Av_D(x)+b-u_D(x)\|
	=
	\|e_D(x)\|.
	\]
	Therefore,
	\[
	\begin{aligned}
		r_{\rm KKT}(v_D(x),x)^2
		&=
		\|\nabla f_1(v_D(x))+A^\top x\|^2
		+
		\operatorname{dist}^2(Av_D(x)+b,\partial f_2^*(x)) \\
		&\le
		\|e_D(x)\|^2
		=
		r_D(x)^2 .
	\end{aligned}
	\]
	Taking square roots yields
	\[
	r_{\rm KKT}(v_D(x),x)
	\le
	r_D(x).
	\]\hfill $\square$
	
	\noindent \textbf{Proof of Corollary \ref{cor:strong-convexity-kkt-msr-implies-dual-eb} }\label{app:proof:cor:strong-convexity-kkt-msr-implies-dual-eb}
	Let \(\kappa_{\rm KKT}>0\) and \(\epsilon_{\rm KKT}>0\) be constants associated with the metric subregularity of \(\mathcal M\) at \((\bar v,\bar x)\) for \(0\).
	
	Since \(f_1\) is \(\sigma\)-strongly convex, \(f_1^*\) is continuously differentiable on \(\mathbb R^n\), and \(\nabla f_1^*\) is \(1/\sigma\)-Lipschitz continuous. Moreover,
	\[
	\partial d(x)
	=
	\partial f_2^*(x)
	-
	A\nabla f_1^*(-A^\top x)
	-
	b.
	\]
	Hence the exact subdifferential formula in Lemma~\ref{lem:dual-residual-controls-kkt} holds. The corresponding primal vector is uniquely determined by
	\begin{equation}\label{eq:explicit-dual-selection}
		v_D(x)
		:=
		\nabla f_1^*(-A^\top x).
	\end{equation}
	
	Since \((\bar v,\bar x)\in\mathcal S\),
	\[
	-A^\top\bar x
	=
	\nabla f_1(\bar v).
	\]
	By conjugacy,
	\[
	v_D(\bar x)
	=
	\nabla f_1^*(-A^\top\bar x)
	=
	\bar v.
	\]
	Furthermore,
	\[
	\begin{aligned}
		\|v_D(x)-\bar v\|
		&=
		\left\|
		\nabla f_1^*(-A^\top x)
		-
		\nabla f_1^*(-A^\top\bar x)
		\right\|\\
		&\leq
		\frac{\|A\|}{\sigma}\|x-\bar x\|.
	\end{aligned}
	\]
	Consequently,
	\[
	\begin{aligned}
		\|(v_D(x),x)-(\bar v,\bar x)\|^2
		&=
		\|v_D(x)-\bar v\|^2+\|x-\bar x\|^2\\
		&\leq
		\left(
		1+\frac{\|A\|^2}{\sigma^2}
		\right)
		\|x-\bar x\|^2.
	\end{aligned}
	\]
	
	Choose \(\rho_D>0\) such that
	\[
	\rho_D
	\sqrt{1+\frac{\|A\|^2}{\sigma^2}}
	<
	\epsilon_{\rm KKT}.
	\]
	Then $(v_D(x),x)\in\mathbb B((\bar v,\bar x),\epsilon_{\rm KKT})$ for every \(x\in\mathbb B(\bar x,\rho_D)\). Thus the localization condition in Theorem~\ref{thm:kkt-msr-implies-dual-eb} is satisfied. The conclusion follows from that theorem.\hfill $\square$
	
	\noindent \textbf{Proof of Theorem	 \ref{thm:primal-dual-eb-implies-kkt-msr}}\label{app:proof:thm:primal-dual-eb-implies-kkt-msr}
	Set $\rho_0:=\min\{\rho_P,\rho_D,\rho_R\},$ and take any $(v,x)\in\mathbb B((\bar v,\bar x),\rho_0).$
	
	If \(r_{\rm KKT}(v,x)=+\infty\), then
	\[
	\operatorname{dist}((v,x),\mathcal S)
	\leq
	\kappa_{\rm KKT}r_{\rm KKT}(v,x)
	\]
	holds automatically.
	
	Suppose now that \(r_{\rm KKT}(v,x)<+\infty\). Since
	\[
	\|v-\bar v\|
	\leq
	\|(v,x)-(\bar v,\bar x)\|
	\leq \rho_0 \leq \rho_P,
	\]
	and similarly
	\[
	\|x-\bar x\|\leq\rho_0 \leq \rho_D,
	\]
	the   local error bounds hold at \(\bar v\) and \(\bar x\) for the primal and dual problems.
	
	Moreover, since \(\rho_0\leq\rho_R\), the reverse residual comparisons \eqref{eq:reverse-residual-comparisons} also apply. Therefore, using \eqref{eq:kkt-product-distance}, we obtain
	\[
	\begin{aligned}
		\operatorname{dist}^2((v,x),\mathcal S)
		&=
		\operatorname{dist}^2(v,\mathcal V)
		+
		\operatorname{dist}^2(x,\Lambda)\\
		&\leq
		\kappa_P^2\| r_P(v)\|^2
		+
		\kappa_D^2r_D(x)^2\\
		&\leq
		\left(
		\kappa_P^2c_P^2
		+
		\kappa_D^2c_D^2
		\right)
		r_{\rm KKT}(v,x)^2.
	\end{aligned}
	\]
	Taking square roots gives
	\[
	\operatorname{dist}((v,x),\mathcal S)
	\leq
	\sqrt{\kappa_P^2c_P^2+\kappa_D^2c_D^2}\,
	r_{\rm KKT}(v,x).
	\]
	
	Thus the reduced KKT mapping \(\mathcal M\) is metrically subregular at \((\bar v,\bar x)\) for \(0\), with $\kappa_{\rm KKT}=\sqrt{\kappa_P^2c_P^2+\kappa_D^2c_D^2}.$\hfill $\square$
	
	\noindent \textbf{Proof of Proposition \ref{prop:structural-residual-comparisons}}\label{app:lem:full-row-rank-primal-residual-comparison}
  			\textnormal{(i)}
			Fix any \((v,x)\in\mathbb R^n\times\mathbb R^m\), and set $a:=\|\nabla f_1(v)+A^\top x\|, \ c:=\operatorname{dist}(Av+b,\partial f_2^*(x)).$
			Then
			\[
			r_{\rm KKT}(v,x)^2=a^2+c^2.
			\]
			If \(r_{\rm KKT}(v,x)=+\infty\), the desired inequality holds trivially. Hence, suppose that \(r_{\rm KKT}(v,x)<+\infty\). Since \(\partial f_2^*(x)\) is nonempty, closed, and convex, there exists \(u\in\partial f_2^*(x)\) such that
			\[
			\|Av+b-u\|=c.
			\]
			
			Define $\widehat v:=v+A^\dagger(u-Av-b).$ Since \(AA^\dagger=I_m\), we have
			\[
			A\widehat v+b=u,
			\qquad
			\|\widehat v-v\|
			\leq
			\|A^\dagger\|c.
			\]
			Since \(u\in\partial f_2^*(x)\), the conjugacy relation gives \(x\in\partial f_2(u)\). Therefore, for every \(z\in\mathbb R^n\),
			\[
			\begin{aligned}
				G(z)
				&=
				f_2(Az+b) \geq
				f_2(u)+\langle x,Az+b-u\rangle =
				G(\widehat v)+\langle A^\top x,z-\widehat v\rangle.
			\end{aligned}
			\]
			Thus \(A^\top x\in\partial G(\widehat v)\), and consequently
			\[
			\widehat v
			=
			\operatorname{prox}_G(\widehat v+A^\top x).
			\]
			
			Using the nonexpansiveness of \(\operatorname{prox}_G\), we obtain
			\[
			\begin{aligned}
				\| r_P(v)\|
				&=
				\left\|
				\operatorname{prox}_G(v-\nabla f_1(v))-v
				\right\|\\
				&\leq
				\left\|
				\operatorname{prox}_G(v-\nabla f_1(v))
				-
				\operatorname{prox}_G(\widehat v+A^\top x)
				\right\|
				+
				\|\widehat v-v\|\\
				&\leq
				\|v-\nabla f_1(v)-\widehat v-A^\top x\|
				+
				\|\widehat v-v\|\\
				&\leq
				a+2\|\widehat v-v\|\\
				&\leq
				a+2\|A^\dagger\|c\\
				&\leq
				\sqrt{1+4\|A^\dagger\|^2}\,
				\sqrt{a^2+c^2}.
			\end{aligned}
			\]


	\textnormal{(ii)}
	Fix any \((v,x)\in\mathbb R^n\times\mathbb R^m\), and set $a:=\|\nabla f_1(v)+A^\top x\|, \  c:=\operatorname{dist}(Av+b,\partial f_2^*(x)).$
	Then $r_{\rm KKT}(v,x)^2=a^2+c^2.$
	If \(r_{\rm KKT}(v,x)=+\infty\), the desired inequality holds trivially. Hence, suppose that \(r_{\rm KKT}(v,x)<+\infty\). Since \(\partial f_2^*(x)\) is nonempty, closed, and convex, there exists \(u\in\partial f_2^*(x)\) such that
	\[
	\|Av+b-u\|=c.
	\]
	
	Since \(f_1\) is \(\sigma\)-strongly convex, \(f_1^*\) is finite and differentiable on \(\mathbb R^n\), and \(\nabla f_1^*\) is \(1/\sigma\)-Lipschitz continuous. Let $v_x:=\nabla f_1^*(-A^\top x).$ By conjugacy,
	\[
	\nabla f_1(v_x)+A^\top x=0.
	\]
	The strong monotonicity of \(\nabla f_1\) gives
	\[
	\begin{aligned}
		\sigma\|v-v_x\|^2
		&\leq
		\left\langle
		\nabla f_1(v)-\nabla f_1(v_x),v-v_x
		\right\rangle\\
		&=
		\left\langle
		\nabla f_1(v)+A^\top x,v-v_x
		\right\rangle\\
		&\leq
		a\|v-v_x\|.
	\end{aligned}
	\]
	If \(v=v_x\), then the estimate
	\[
	\|v-v_x\|
	\leq
	\frac{a}{\sigma}
	\]
	holds trivially. Otherwise, dividing the preceding inequality by \(\|v-v_x\|\) gives the same estimate.
	Moreover, since \(x\mapsto f_1^*(-A^\top x)-\langle b,x\rangle\) is finite and continuously differentiable,
	\[
	\partial d(x)
	=
	\partial f_2^*(x)-Av_x-b.
	\]
	Thus \(u-Av_x-b\in\partial d(x)\), and consequently
	\[
	\begin{aligned}
		r_D(x)
		&\leq
		\|u-Av_x-b\|\\
		&\leq
		\|u-Av-b\|+\|A\|\|v-v_x\|\\
		&\leq
		c+\frac{\|A\|}{\sigma}a\\
		&\leq
		\sqrt{1+\frac{\|A\|^2}{\sigma^2}}\,
		\sqrt{a^2+c^2}.
	\end{aligned}
	\]
	The desired inequality follows.\hfill $\square$
	
	\noindent \textbf{Proof of Theorem \ref{thm:primal-eb-implies-kkt-msr}} \label{app:proof-primal-eb-implies-kkt-msr}

	Set $\rho:=\min\{\rho_P,\rho_1\},$ and take any $(v,x)\in\mathbb B((\bar v,\bar x),\rho).$
	
	If \(r_{\rm KKT}(v,x)=+\infty\), then the desired metric-subregularity inequality holds automatically. Suppose, therefore, that \(r_{\rm KKT}(v,x)<+\infty\).
	
	Since
	\[
	\|v-\bar v\|
	\leq
	\|(v,x)-(\bar v,\bar x)\|
	\leq \rho \leq \rho_P,
	\]
	the   local error bound applies. Combining it with \eqref{eq:primal-residual-kkt-domination}, we obtain
	\[
	\begin{aligned}
		\operatorname{dist}(v,\mathcal V)
		&\leq
		\kappa_P\| r_P(v)\|\\
		&\leq
		\kappa_Pc_Pr_{\rm KKT}(v,x).
	\end{aligned}
	\]
	
	Using \eqref{eq:dual-distance-primal-control}, we further obtain
	\[
	\begin{aligned}
		\operatorname{dist}(x,\Lambda)
		&\leq
		\alpha r_{\rm KKT}(v,x)
		+
		\beta\operatorname{dist}(v,\mathcal V)\\
		&\leq
		\left(
		\alpha+\beta\kappa_Pc_P
		\right)
		r_{\rm KKT}(v,x).
	\end{aligned}
	\]
	
	By Proposition~\ref{prop:kkt-solution-product}, $\mathcal S=\mathcal V\times\Lambda.$ Hence, under the Euclidean product norm,
	\[
	\begin{aligned}
		\operatorname{dist}^2((v,x),\mathcal S)
		&=
		\operatorname{dist}^2(v,\mathcal V)
		+
		\operatorname{dist}^2(x,\Lambda)\\
		&\leq
		\left[
		\kappa_P^2c_P^2
		+
		\left(
		\alpha+\beta\kappa_Pc_P
		\right)^2
		\right]
		r_{\rm KKT}(v,x)^2.
	\end{aligned}
	\]
	Taking square roots gives
	\[
	\operatorname{dist}((v,x),\mathcal S)
	\leq
	\sqrt{
		\kappa_P^2c_P^2
		+
		\left(
		\alpha+\beta\kappa_Pc_P
		\right)^2
	}\,
	r_{\rm KKT}(v,x).
	\]
	
	Therefore, \(\mathcal M\) is metrically subregular at
	\((\bar v,\bar x)\) for \(0\).\hfill $\square$
	
	\noindent \textbf{Proof of Theorem \ref{thm:dual-eb-implies-kkt-msr}}\label{app:proof:thm:dual-eb-implies-kkt-msr}
	Set $\rho:=\min\{\rho_D,\rho_1\},$ and take any $(v,x)\in\mathbb B((\bar v,\bar x),\rho).$
	
	If \(r_{\rm KKT}(v,x)=+\infty\), then the desired metric-subregularity inequality holds automatically. Suppose, therefore, that \(r_{\rm KKT}(v,x)<+\infty\).
	
	Since
	\[
	\|x-\bar x\|
	\leq
	\|(v,x)-(\bar v,\bar x)\|
	\leq \rho \leq \rho_D,
	\]
	the   local error bound applies. Moreover, \eqref{eq:dual-residual-kkt-domination} gives
	\[
	r_D(x)
	\leq
	c_Dr_{\rm KKT}(v,x)
	<
	+\infty.
	\]
	Consequently,
	\[
	\begin{aligned}
		\operatorname{dist}(x,\Lambda)
		&\leq
		\kappa_Dr_D(x)\leq
		\kappa_Dc_Dr_{\rm KKT}(v,x).
	\end{aligned}
	\]
	
	Using \eqref{eq:primal-distance-dual-control}, we further obtain
	\[
	\begin{aligned}
		\operatorname{dist}(v,\mathcal V)
		&\leq
		\alpha r_{\rm KKT}(v,x)
		+
		\beta\operatorname{dist}(x,\Lambda)\leq
		\left(
		\alpha+\beta\kappa_Dc_D
		\right)
		r_{\rm KKT}(v,x).
	\end{aligned}
	\]
	
	By Proposition~\ref{prop:kkt-solution-product}, $\mathcal S=\mathcal V\times\Lambda.$ Hence
	\[
	\begin{aligned}
		\operatorname{dist}^2((v,x),\mathcal S)
		&=
		\operatorname{dist}^2(v,\mathcal V)
		+
		\operatorname{dist}^2(x,\Lambda)\\
		&\leq
		\left[
		\left(
		\alpha+\beta\kappa_Dc_D
		\right)^2
		+
		\kappa_D^2c_D^2
		\right]
		r_{\rm KKT}(v,x)^2.
	\end{aligned}
	\]
	Taking square roots gives
	\[
	\operatorname{dist}((v,x),\mathcal S)
	\leq
	\sqrt{
		\left(
		\alpha+\beta\kappa_Dc_D
		\right)^2
		+
		\kappa_D^2c_D^2
	}\,
	r_{\rm KKT}(v,x).
	\]
	
	Therefore, \(\mathcal M\) is metrically subregular at \((\bar v,\bar x)\) for \(0\). \hfill $\square$

	\section{Proofs in Section \ref{sec:applications}}
	\noindent \textbf{Proof of Proposition \ref{prop:quadratic-polyhedral-error-bounds}} \label{app:prop:quadratic-polyhedral-error-bounds}
	\noindent 	\textnormal{(i)}
		By Lemma~\ref{lem:quadratic-polyhedral-kkt},	\(\mathcal M_Q\) is metrically subregular at every KKT solution.	Since the corresponding composite matrix is \(A=I\),	Corollary~\ref{cor:full-row-rank-kkt-msr-implies-primal-eb}	yields the primal   local error bound.
		
	\noindent 	\textnormal{(ii)}
		The polyhedrality of \(\psi\) implies that \(\psi^*\) is	polyhedral, while \(q^*(-\cdot)\) is piecewise linear-quadratic.	Hence \(d_Q\) is a proper closed convex piecewise linear-quadratic	function, so \(\partial d_Q\) is piecewise polyhedral; see	\cite[Proposition~12.30]{RockafellarWets1998}.	Robinson's result therefore yields the dual   local	error bound.
		
	\noindent 	\textnormal{(iii)}
		By \textnormal{(i)} and	Proposition~\ref{prop:local-sublevel-relations} \textnormal{(iii)},	level-boundedness of \(F_Q\) yields the primal Luo--Tseng	error bound.
		
	\noindent 	\textnormal{(iv)}
		Since \(A=I\) has full row rank,	Corollary~\ref{cor:level-boundedness-sufficient-conditions}	implies that \(d_Q\) is level-bounded.	Together with \textnormal{(ii)},	Proposition~\ref{prop:local-sublevel-relations} \textnormal{(iv)}	gives the dual Luo--Tseng error bound.
		
	\noindent 	\textnormal{(v)}
		If \(Q\succ0\), then \(F_Q\) is strongly convex and hence	level-bounded. The conclusion follows from	\textnormal{(i)}--\textnormal{(iv)}.	\hfill $\square$

\noindent
\textbf{Proof of Proposition \ref{prop:svm-error-bounds}}
\label{app:proof:prop:svm-error-bounds}.

All the models considered here are analyzed from the primal side.

Let $v:=(\omega,\beta)\in\mathbb R^{d+1}.$ For the \(L_1\)-loss, pinball-loss, and \(\epsilon\)-insensitive pinball-loss SVC models, set
\[
f_1(v):=\frac12\|\omega\|^2,
\]
and
\[
f_2(v)
:=
C\sum_{i=1}^m
\ell\!\left(
1-y_i(\omega^\top z_i+\beta)
\right),
\]
where \(\ell\) denotes the corresponding scalar loss. We use $A=I,\ b=0.$
Since \(f_1\) is a finite-valued convex quadratic function and each
of these losses is finite-valued, convex, and piecewise affine,
\(f_2\) is a proper, closed, polyhedral convex function.

For the \(\epsilon\)-insensitive-loss and
\(\epsilon\)-penalty-loss SVR models, the same representation is used,
with
\[
f_2(v)
:=
C\sum_{i=1}^m
L\!\left(
y_i-\omega^\top z_i-\beta
\right).
\]
Again, \(f_2\) is proper, closed, and polyhedral convex.

For the least-squares SVC, take
\[
f_1(v)
:=
\frac12\|\omega\|^2
+
C\sum_{i=1}^m
\left(
1-y_i(\omega^\top z_i+\beta)
\right)^2,
\]
together with $f_2\equiv0, \  A=I, \ b=0.$
The function \(f_1\) is a finite-valued convex quadratic function.
Moreover, its quadratic form in a direction
\((d_\omega,d_\beta)\) is
\[
\|d_\omega\|^2
+
2C\sum_{i=1}^m
\bigl(
z_i^\top d_\omega+d_\beta
\bigr)^2,
\]
where we have used \(y_i^2=1\). If this quantity is zero, then
\(d_\omega=0\), and, since the training set is nonempty,
\(d_\beta=0\). Hence \(f_1\) is strongly convex.

Similarly, for the least-squares SVR, take
\[
f_1(v)
:=
\frac12\|\omega\|^2
+
C\sum_{i=1}^m
\left(
y_i-\omega^\top z_i-\beta
\right)^2,
\]
with $
f_2\equiv0,
\ 
A=I,
\ 
b=0.$
The same argument shows that \(f_1\) is strongly convex.

Finally, for the bias-regularized hinge-loss SVC
\eqref{eq:svm-example}, take
\[
f_1(v)
:=
\frac12\|\omega\|^2+\frac12\beta^2,
\]
and
\[
f_2(v)
:=
C\sum_{i=1}^m
\max\left\{
1-y_i(\omega^\top z_i+\beta),0
\right\},
\]
with $A=I,\ b=0.$
Here \(f_1\) is strongly convex quadratic and \(f_2\) is
finite-valued polyhedral convex. Thus all the models listed as direct
formulations in Proposition~\ref{prop:svm-error-bounds} are instances
of \eqref{eq:quadratic-polyhedral-primal}.

\medskip
\noindent
\emph{Lifted formulations.}
For the \(L_2\)-loss SVC, we use
\[
(t)_+^2
=
\min_{\xi}
\left\{
\xi^2:
\xi\geq t,\;
\xi\geq0
\right\}.
\]
Hence the model is equivalent to
\[
\begin{aligned}
	\min_{\omega,\beta,\xi}\quad&
	\frac12\|\omega\|^2
	+
	C\sum_{i=1}^m\xi_i^2\\
	\textnormal{subject to}\quad&
	\xi_i
	\geq
	1-y_i(\omega^\top z_i+\beta),
	\qquad i=1,\ldots,m,\\
	&
	\xi_i\geq0,
	\qquad i=1,\ldots,m.
\end{aligned}
\]
Let
\[
\widetilde v:=(\omega,\beta,\xi)
\]
and let \(\Omega_{L_2}\) denote the feasible set above. Then
\[
f_1(\widetilde v)
:=
\frac12\|\omega\|^2
+
C\sum_{i=1}^m\xi_i^2,
\qquad
f_2(\widetilde v)
:=
\delta_{\Omega_{L_2}}(\widetilde v),
\]
with $A=I, \  b=0.$
Thus the lifted \(L_2\)-loss SVC is of the form
\eqref{eq:quadratic-polyhedral-primal}.

For the Huber-pinball SVC, recall that
\[
\ell_{\rm hp}^{\tau,\delta}(t)
=
h_\delta(t)+\tau h_\delta(-t),
\]
where
\[
h_\delta(t)
=
\min_{u,s}
\left\{
\frac{u^2}{2\delta}+s:
u+s\geq t,\;
u\geq0,\;
s\geq0
\right\}.
\]
The two minimization problems defining
\(h_\delta(t)\) and \(h_\delta(-t)\) involve disjoint auxiliary
variables. Hence their optimal values can be combined into a joint
minimization. Moreover, for each fixed \((\omega,\beta)\), the
resulting auxiliary minimization attains its minimum. Therefore,
minimizing its optimal value over \((\omega,\beta)\) is equivalent to
minimizing jointly over the original and auxiliary variables. This
gives the lifted formulation
\[
\begin{aligned}
	\min_{\omega,\beta,u^\pm,s^\pm}\quad&
	\frac12\|\omega\|^2
	+
	C\sum_{i=1}^m
	\left[
	\frac{(u_i^+)^2}{2\delta}+s_i^+
	+
	\tau
	\left(
	\frac{(u_i^-)^2}{2\delta}+s_i^-
	\right)
	\right]\\
	\textnormal{subject to}\quad&
	u_i^++s_i^+
	\geq
	1-y_i(\omega^\top z_i+\beta),
	\qquad i=1,\ldots,m,\\
	&
	u_i^-+s_i^-
	\geq
	-1+y_i(\omega^\top z_i+\beta),
	\qquad i=1,\ldots,m,\\
	&
	u_i^\pm\geq0,\qquad
	s_i^\pm\geq0,
	\qquad i=1,\ldots,m.
\end{aligned}
\]
Let $\widetilde v:=(\omega,\beta,u^+,u^-,s^+,s^-)$
and let \(\Omega_{\rm hp}\) denote the feasible set above. Then
\[
\begin{aligned}
	f_1(\widetilde v)
	&:=
	\frac12\|\omega\|^2
	+
	C\sum_{i=1}^m
	\left[
	\frac{(u_i^+)^2}{2\delta}+s_i^+
	+
	\tau
	\left(
	\frac{(u_i^-)^2}{2\delta}+s_i^-
	\right)
	\right],\\
	f_2(\widetilde v)
	&:=
	\delta_{\Omega_{\rm hp}}(\widetilde v),
	\qquad
	A=I,
	\qquad
	b=0.
\end{aligned}
\]
Thus \(f_1\) is a finite-valued convex quadratic function and
\(f_2\) is proper, closed, and polyhedral convex.

For the Huber-loss SVR, using
\[
H_c(t)
=
\min_{u,s^+,s^-}
\left\{
\frac12u^2+c(s^++s^-):
u+s^+-s^-=t,\;
s^+,s^-\geq0
\right\},
\]
we obtain the equivalent lifted problem
\[
\begin{aligned}
	\min_{\omega,\beta,u,s^+,s^-}\quad&
	\frac12\|\omega\|^2
	+
	C\sum_{i=1}^m
	\left[
	\frac12u_i^2
	+
	c(s_i^++s_i^-)
	\right]\\
	\textnormal{subject to}\quad&
	u_i+s_i^+-s_i^-
	=
	y_i-\omega^\top z_i-\beta,
	\qquad i=1,\ldots,m,\\
	&
	s_i^+\geq0,\qquad
	s_i^-\geq0,
	\qquad i=1,\ldots,m.
\end{aligned}
\]
With $\widehat v:=(\omega,\beta,u,s^+,s^-)$
and \(\Omega_{\rm H}\) denoting the feasible set above, this problem
has the representation
\[
\begin{aligned}
	f_1(\widehat v)
	&:=
	\frac12\|\omega\|^2
	+
	C\sum_{i=1}^m
	\left[
	\frac12u_i^2+c(s_i^++s_i^-)
	\right],\\
	f_2(\widehat v)
	&:=
	\delta_{\Omega_{\rm H}}(\widehat v),
	\qquad
	A=I,
	\qquad
	b=0.
\end{aligned}
\]
Hence all three lifted models are instances of
\eqref{eq:quadratic-polyhedral-primal}.

\medskip
\noindent
\emph{Level-boundedness and error-bound conclusions.}
The pinball, \(\epsilon\)-insensitive pinball,
\(\epsilon\)-insensitive, and \(\epsilon\)-penalty scalar losses are
coercive on \(\mathbb R\). Therefore, bounded objective values imply
boundedness of \(\omega\) and of the corresponding residual at any
fixed training observation. Since the training set is nonempty,
\(\beta\) is also bounded. Hence these primal objectives are
level-bounded.

The least-squares SVC and SVR objectives are strongly convex, and the
bias-regularized hinge-loss SVC has a strongly convex quadratic term.
Hence these objectives are level-bounded.

For the lifted Huber-pinball SVC, bounded objective values imply
boundedness of \(\omega,u^\pm,s^\pm\). The two families of constraints
then imply that
\[
1-y_i(\omega^\top z_i+\beta)
\]
is bounded from both above and below. Since the training set is
nonempty and \(\omega\) is bounded, \(\beta\) is bounded. Hence the
lifted objective is level-bounded.

Similarly, for the lifted Huber-loss SVR, bounded objective values
imply boundedness of \(\omega,u,s^+,s^-\). The equality constraints
then imply boundedness of
\[
y_i-\omega^\top z_i-\beta,
\]
and hence boundedness of \(\beta\). Thus this lifted objective is
level-bounded.

The \(L_1\)-loss and \(L_2\)-loss SVC objectives need not be
level-bounded without an additional assumption on the training
labels. Nevertheless, both problems admit optimal solutions. If only
one class is represented, this is immediate by taking
\(\omega=0\) and choosing \(\beta\) so that all hinge losses vanish.
If both classes are represented, the corresponding objectives are
level-bounded and hence attain their minima. The exact lifting above
then shows that the lifted \(L_2\)-loss formulation also admits an
optimal solution.

Thus all the direct and lifted formulations satisfy the solvability
assumption of Proposition~
\ref{prop:quadratic-polyhedral-error-bounds}. Consequently, the primal
and dual local error bounds and the dual Luo--Tseng error bound hold
for all these formulations. For the level-bounded models identified
above, the primal Luo--Tseng error bound also follows. In particular,
it holds for the pinball-loss, \(\epsilon\)-insensitive
pinball-loss, and least-squares SVC models, for all the SVR models
considered above, for the lifted Huber-pinball and Huber-loss
formulations, and for the bias-regularized hinge-loss SVC. For the
\(L_1\)- and \(L_2\)-loss SVC models, the same conclusion holds
whenever the corresponding primal objective is level-bounded.
\hfill\(\square\)

\noindent \textbf{Proof of Proposition \ref{prop:strongly-convex-arbitrary-regularizer}
\noindent \textnormal{(i)}}\label{app:strongly-convex-primal-eb}
%
	Define
	\[
	T(v)
	:=
	\operatorname{prox}_P
	\bigl(v-\nabla f(v)\bigr),
	\qquad
	 r_P(v):=T(v)-v,
	\]
	and set  $w:=T(v). $ The optimality condition for the proximal mapping gives
	\[
	v-\nabla f(v)-w
	\in
	\partial P(w).
	\]
	Since $
	 r_P(v)=w-v,$
	the preceding inclusion can be rewritten as
	\[
	-\nabla f(v)- r_P(v)
	\in
	\partial P(w).
	\]
	Consequently,
	\begin{equation}
		\label{eq:strong-convex-subgradient-at-prox-point}
		\nabla f(w)-\nabla f(v)- r_P(v)
		\in
		\partial\Phi(w).
	\end{equation}
	
	Choose a neighborhood \(U\) of \(\bar v\) on which \(\nabla f\) is \(L\)-Lipschitz continuous. Since \(T\) is continuous and $
	T(\bar v)=\bar v,$
	there exists \(\rho>0\) such that  $v\in\mathbb B(\bar v,\rho) $ ensures that both \(v\) and \(T(v)\) belong to \(U\).
	
	For such a point \(v\), define
	\[
	g_w
	:=
	\nabla f(w)-\nabla f(v)- r_P(v).
	\]
	By \eqref{eq:strong-convex-subgradient-at-prox-point},  $g_w\in\partial\Phi(w).$ The local Lipschitz continuity of \(\nabla f\) yields
	\[
	\begin{aligned}
		\|g_w\|
		&\leq
		\|\nabla f(w)-\nabla f(v)\|
		+
		\| r_P(v)\|\\
		&\leq
		L\|w-v\|
		+
		\| r_P(v)\|\\
		&=
		(L+1)\| r_P(v)\|.
	\end{aligned}
	\]
	
	The function \(\Phi=f+P\) is \(\sigma\)-strongly convex. Therefore, its subdifferential is \(\sigma\)-strongly monotone. Since $	0\in\partial\Phi(\bar v) \ \text{and}\ 	g_w\in\partial\Phi(w),$
	strong monotonicity gives
	\[
	\langle g_w,w-\bar v\rangle
	\geq
	\sigma\|w-\bar v\|^2.
	\]
	By the Cauchy-Schwarz inequality,
	\[
	\sigma\|w-\bar v\|^2
	\leq
	\|g_w\|\|w-\bar v\|.
	\]
	It follows that
	\[
	\|w-\bar v\|
	\leq
	\frac{1}{\sigma}\|g_w\|
	\leq
	\frac{L+1}{\sigma}\| r_P(v)\|.
	\]
	
	Using \(w-v= r_P(v)\), we obtain
	\[
	\begin{aligned}
		\|v-\bar v\|
		&\leq
		\|v-w\|+\|w-\bar v\|\\
		&\leq
		\left(
		1+\frac{L+1}{\sigma}
		\right) 
		\| r_P(v)\|.
	\end{aligned}
	\]
	Since the primal solution set is the singleton	\(\{\bar v\}\), this proves
	\[
	\operatorname{dist}(v,\mathcal V)
	\leq
	\kappa_P\| r_P(v)\|,
	\qquad
	v\in\mathbb B(\bar v,\rho),
	\]
	with  $\kappa_P
	:=
	1+\frac{L+1}{\sigma}. $
	Thus statement \textnormal{(i)} holds.
	
		\noindent	\textnormal{(ii)}	Since the composite matrix in \eqref{eq:strongly-convex-general-regularizer} is \(A=I\),	Corollary~\ref{cor:primal-eb-kkt-msr-equivalence} shows that	\textnormal{(i)} and \textnormal{(ii)} are equivalent.
		
		\noindent	\textnormal{(iii)}	Since \(f\) is strongly convex,	Corollary~\ref{cor:dual-eb-kkt-msr-equivalence} shows that	\textnormal{(ii)} and \textnormal{(iii)} are equivalent.	Hence \textnormal{(i)}--\textnormal{(iii)} are equivalent and,	by \textnormal{(i)}, all three properties hold.
		
		\noindent	\textnormal{(iv)}	The primal objective \(\Phi=f+P\) is strongly convex and hence 	level-bounded. Since the primal   local error bound holds	at its unique solution, Proposition~\ref{prop:local-sublevel-relations}\textnormal{(iii)}	gives the primal Luo--Tseng error bound.
		
		\noindent	\textnormal{(v)}	Since \(A=I\) has full row rank, Corollary~\ref{cor:level-boundedness-sufficient-conditions}	implies that the dual objective \(d_P\) is level-bounded.	Together with \textnormal{(iii)},	Proposition~\ref{prop:local-sublevel-relations} \textnormal{(iv)}	gives the dual Luo--Tseng error bound.
		\hfill $\square$
		
\noindent
\textbf{Proof of Proposition
	\ref{prop:kkt-eb-rank-deficient-ls}}
\label{app:proof:kkt-eb-rank-deficient-ls}.

Since \(P\) is a norm, it is finite-valued, continuous, and
coercive. Hence the primal objective
\[
F(v)
=
\frac12\|Bv-y\|^2+P(v)
\]
is level-bounded and admits an optimal solution. Moreover, the
standard convex sum rule gives, for every
\(\bar v\in\mathcal V\),
\[
0
\in
B^\top(B\bar v-y)+\partial P(\bar v).
\]
Thus there exists
\[
\bar x
:=
-B^\top(B\bar v-y)
\in
\partial P(\bar v).
\]
By convex conjugacy, $\bar v\in\partial P^*(\bar x),$
so \((\bar v,\bar x)\in\mathcal S\). The Fenchel--Young
equalities then imply dual optimality and strong duality. Hence
Assumption~\ref{ass:basic-primal-dual-setting} is satisfied.

\medskip
\noindent
\textnormal{(i)}
In the identity composite representation used in
\eqref{eq:rank-deficient-regularized-ls}, the composite matrix is $ A=I.$
Hence \(A\) has full row rank. Since \(\nabla f_1(v)=B^\top(Bv-y)\) is globally Lipschitz continuous, Corollary~
\ref{cor:full-row-rank-kkt-msr-implies-primal-eb}
shows that metric subregularity of \(\mathcal M_P\) at
\((\bar v,\bar x)\) implies the primal local error bound at
\(\bar v\).

Since metric subregularity is assumed at every point of
\(\mathcal S\), the primal local error bound holds at every
\(\bar v\in\mathcal V\).

\medskip
\noindent
 \textnormal{(ii)}
We now establish the dual local error bound. Since \(B\) is not
assumed to have full column rank, \(f_1\) need not be strongly
convex, and therefore Corollary~
\ref{cor:strong-convexity-kkt-msr-implies-dual-eb}
cannot be applied directly.

Set $Q:=B^\top B, \ c:=B^\top y.$
Then
\[
f_1(v)
=
\frac12\langle v,Qv\rangle
-
\langle c,v\rangle
+
\frac12\|y\|^2.
\]
Since $\operatorname{range}(Q)
=
\operatorname{range}(B^\top),$
we have \(c\in\operatorname{range}(Q)\). The Fenchel conjugate of
\(f_1\) is
\begin{equation}
	\label{eq:rank-deficient-ls-conjugate}
	f_1^*(s)
	=
	\frac12
	\left\langle
	s+c,Q^\dagger(s+c)
	\right\rangle
	-
	\frac12\|y\|^2
	+
	\delta_{\operatorname{range}(Q)}(s).
\end{equation}
Hence $
\operatorname{dom}f_1^*
=
\operatorname{range}(Q),$
and, for every \(s\in\operatorname{range}(Q)\),
\begin{equation}
	\label{eq:rank-deficient-ls-conjugate-subdiff}
	\partial f_1^*(s)
	=
	Q^\dagger(s+c)+\ker Q.
\end{equation}

Since \(P\) is a norm, its conjugate is the indicator function of
the closed unit ball \(C_P^*\) of the dual norm:
\[
P^*=\delta_{C_P^*}.
\]
The set \(C_P^*\) is compact and convex, and
\[
0\in\operatorname{int}C_P^*.
\]
On the other hand,
\[
\operatorname{dom}(f_1^*(-\cdot))
=
\operatorname{range}(Q),
\]
which is a linear subspace. Therefore,
\[
0
\in
\operatorname{ri}(\operatorname{dom}P^*)
\cap
\operatorname{ri}
\bigl(
\operatorname{dom}(f_1^*(-\cdot))
\bigr).
\]
The convex sum rule, together with the exact chain rule for
\(x\mapsto-x\), gives
\begin{equation}
	\label{eq:rank-deficient-dual-subdiff}
	\partial d(x)
	=
	\partial P^*(x)
	-
	\partial f_1^*(-x),
	\qquad
	x\in\mathbb R^n.
\end{equation}

Fix any \(\bar x\in\Lambda\). Suppose, to the contrary, that the
dual local error bound fails at \(\bar x\). Then, for every integer
\(k\geq1\), there exists
\[
x^k\in\mathbb B(\bar x,1/k)
\]
such that
\begin{equation}
	\label{eq:dual-eb-failure-rank-deficient}
	\operatorname{dist}(x^k,\Lambda)
	>
	k\,r_D(x^k).
\end{equation}
Since
\[
\operatorname{dist}(x^k,\Lambda)
\leq
\|x^k-\bar x\|
\leq
\frac1k,
\]
we obtain
\[
r_D(x^k)
<
\frac1{k^2}
\to0.
\]
In particular, \(r_D(x^k)<+\infty\).

For each \(k\), choose $
e^k\in\partial d(x^k)$
such that
\[
\|e^k\|=r_D(x^k).
\]
By \eqref{eq:rank-deficient-dual-subdiff}, there exist $u^k\in\partial P^*(x^k),\  v^k\in\partial f_1^*(-x^k)$
such that
\begin{equation}
	\label{eq:dual-decomposition-rank-deficient}
	e^k=u^k-v^k.
\end{equation}

We first show that \(\{v^k\}\) is bounded. Suppose otherwise.
Passing to a subsequence if necessary, we may assume that
\[
\|v^k\|\to+\infty,
\qquad
\frac{v^k}{\|v^k\|}\to z,
\qquad
\|z\|=1.
\]
By
\eqref{eq:rank-deficient-ls-conjugate-subdiff},
\[
v^k
=
Q^\dagger(c-x^k)+z^k,
\qquad
z^k\in\ker Q.
\]
Since \(x^k\to\bar x\), the sequence $\{Q^\dagger(c-x^k)\}$ is bounded. Therefore,
\[
z\in\ker Q.
\]

Moreover,
\[
\partial P^*(x^k)
=
N_{C_P^*}(x^k),
\]
and \eqref{eq:dual-decomposition-rank-deficient} gives $u^k=v^k+e^k.$
Since \(N_{C_P^*}(x^k)\) is a cone,
\[
\frac{u^k}{\|v^k\|}
\in
N_{C_P^*}(x^k).
\]
Because \(e^k\to0\),
\[
\frac{u^k}{\|v^k\|}
\to z.
\]
The graph of the normal-cone mapping to a closed convex set is
closed. Hence
\[
z\in
N_{C_P^*}(\bar x)
=
\partial P^*(\bar x).
\]

Choose any \(\widehat v\in\mathcal V\). Since
\(\bar x\in\Lambda\) and
\(\mathcal S=\mathcal V\times\Lambda\), we have
\[
Q\widehat v-c+\bar x=0,
\qquad
\widehat v\in N_{C_P^*}(\bar x).
\]
Since \(N_{C_P^*}(\bar x)\) is a convex cone and $
z\in N_{C_P^*}(\bar x)\cap\ker Q,$
it follows that, for every \(t\geq0\), $
\widehat v+t z
\in
N_{C_P^*}(\bar x)$
and
\[
Q(\widehat v+t z)-c+\bar x=0.
\]
Thus
\[
(\widehat v+t z,\bar x)\in\mathcal S
\]
for every \(t\geq0\). Hence
\[
\widehat v+t z\in\mathcal V,
\qquad
t\geq0.
\]
This contradicts the boundedness of \(\mathcal V\), which follows
from the level-boundedness of \(F\). Therefore \(\{v^k\}\) is
bounded.

Passing to a further subsequence if necessary, suppose that $
v^k\to v^*.$
Since \(e^k\to0\),
\eqref{eq:dual-decomposition-rank-deficient} also gives
\[
u^k\to v^*.
\]
The graphs of \(\partial f_1^*\) and \(\partial P^*\) are closed,
and therefore
\[
v^*\in\partial f_1^*(-\bar x),
\qquad
v^*\in\partial P^*(\bar x).
\]
By convex conjugacy and differentiability of \(f_1\),
\[
B^\top(Bv^*-y)+\bar x=0.
\]
Hence
\[
(v^*,\bar x)\in\mathcal S.
\]

By assumption, \(\mathcal M_P\) is metrically subregular at
\((v^*,\bar x)\) for \(0\). Thus there exist
\(\kappa>0\) and \(\rho>0\) such that
\[
\operatorname{dist}((v,x),\mathcal S)
\leq
\kappa\,
\operatorname{dist}(0,\mathcal M_P(v,x))
\]
whenever $(v,x)\in
\mathbb B((v^*,\bar x),\rho).$
For all sufficiently large \(k\),
\[
(v^k,x^k)
\in
\mathbb B((v^*,\bar x),\rho).
\]

Since $
v^k\in\partial f_1^*(-x^k),$
convex conjugacy gives
\[
B^\top(Bv^k-y)+x^k=0.
\]
Furthermore,
\[
\begin{aligned}
	\operatorname{dist}
	\bigl(v^k,\partial P^*(x^k)\bigr)
	&\leq
	\|v^k-u^k\|=
	\|e^k\|=
	r_D(x^k).
\end{aligned}
\]
Consequently,
\[
r_{\rm KKT}(v^k,x^k)
\leq
r_D(x^k).
\]
Metric subregularity therefore yields
\[
\begin{aligned}
	\operatorname{dist}(x^k,\Lambda)
	&\leq
	\operatorname{dist}((v^k,x^k),\mathcal S)\leq
	\kappa r_{\rm KKT}(v^k,x^k)\leq
	\kappa r_D(x^k),
\end{aligned}
\]
which contradicts
\eqref{eq:dual-eb-failure-rank-deficient} for every sufficiently
large \(k\). Hence the dual local error bound holds at
\(\bar x\). Since \(\bar x\in\Lambda\) was arbitrary, it holds at
every dual solution.

\medskip
\noindent
 \textnormal{(iii)} As observed above, \(F\) is level-bounded because \(P\) is a norm.
Moreover,
\[
\operatorname{dom}d
\subseteq
\operatorname{dom}P^*
=
C_P^*.
\]
Since \(C_P^*\) is compact and \(d\) is closed, every finite lower
level set of \(d\) is compact. Hence \(d\) is level-bounded.

By parts \textnormal{(i)} and \textnormal{(ii)}, the primal and
dual local error bounds hold at every primal and dual solution,
respectively. Proposition~
\ref{prop:local-sublevel-relations}
\textnormal{(iii)}--\textnormal{(iv)} therefore yields the primal
and dual Luo--Tseng error bounds.
\hfill $\square$
\end{appendices}

\bibliography{sn-bibliography_928a}

\end{document}